\def\H{\mathcal{H}}
\def\C{\mathbb{C}}
\def\c2{\mathbb{C}^2}
\def\R{\mathbb{R}}
\def\1{\mathbf{1}}
\def\B{\mathbb{B}}
\def\a{\alpha}
\def\f{\varphi}
\newtheorem{lem}{Lemma}[section]
\newtheorem{prop}[lem]{Proposition}
\newtheorem{defi}[lem]{Definition}
\newtheorem{def/not}[lem]{Definition/Notations}
\newtheorem{thm}[lem]{Theorem}
\newtheorem*{ackn}{Acknowledgements}
\newcommand{\HH}{\mathcal{H}}
\DeclareMathOperator{\Hess}{\mathrm{Hess}} %
\begin{document}
\title[The  metric space of  plurisubharmonic functions]{Geometry and Topology of the space of plurisubharmonic functions}

\author{Soufian ABJA}

 \date{\today}

 \address{Ibn Tofail University\\
Kenitra Morocco}
\email{Soufian.abja@uit.ac.ma}

\begin{abstract}
Let $\Omega$ be a strongly pseudoconvex domain. We introduce the Mabuchi space of strongly plurisubharmonic functions in 
$\Omega$. We study metric  properties of this space   using Mabuchi geodesics and establish
  regularity properties of the latter, especially in the ball.
As an application we study the existence of local K\"ahler-Einstein metrics.
\end{abstract}

\maketitle

\tableofcontents


\section*{Introduction}
\label{sec:intro}

 Let $Y$ be a compact K\"ahler manifold and $\a_Y \in H^{1,1}(Y,\R)$ a K\"ahler class. The space $\HH_{\a_Y}$ of K\"ahler metrics $\omega_Y$ in $\a_Y$ can be seen as an infinite dimensional riemannian manifold
whose tangent spaces $T_{\omega_Y} \H_{\a_Y}$ can all be identified with ${\mathcal C}^{\infty}(Y,\R)$.
Mabuchi has introduced in \cite{Mab87} an $L^2$-metric on $\H_{\a_Y}$, by setting
$$
\langle f, g \rangle_{\omega_Y}:=\int_Y f \, g \, \frac{{\omega_Y}^n}{V_{\a_Y}},
$$
where $n=\dim_\C Y$ and $V_{\a_Y}=\int_Y {\omega_Y}^n=\a_Y^n$ denotes the volume of $\a_Y$.
Mabuchi  studied the corresponding geometry of $\H_{\a_Y}$, showing in particular that it
can formally be seen as a locally symmetric space of non positive curvature. The (geometry) metric study of the space $(\H_{\a_Y},\langle , \rangle_{\omega_Y})$ has motiveted a lot of interesting works in the last decades, see notably \cite{Don99,Chen00,CC02,CT08,Chen09,LV11,DL12,Dar13,Dar14,Dar15}.

  The purpose  of this article is to extend some of  these studies to the case when
  $Y$ is a smooth strongly pseudoconvex bounded domain of $\C^n$. We note here that this problem of extension to the local case recently been considered  by Rashkovskii \cite{Rash16} and Hosono \cite{Hos16}, Rashkovskii  studied  geodesics for plurisubharmonic functions in the Cegrell class $\mathcal{F}_1$ on a
bounded hyperconvex domain, he also showed that functions with strong singularities generally cannot be connected by (sub)geodesic arcs. Hosono described the behavior of the  weak geodesics between toric psh functions with poles at the origin.

 Our  first interest is the geometry of the space of plurisubharmonic functions, We equipped the space of plurisubharmonic functions with a Levi-Civita connection $D$ and we describe the tensor curvature and sectional curvature as in a paper of Mabuchi \cite{Mab87}.
Our first main result is to the establish that the space of plurisubharmonic functions is a locally symmetric space:

\medskip
\noindent {\bf Theorem A.}
\textit{The  Mabuchi space  $\mathcal{H}$  equipped with the Levi-Civita connection  $D$ is a locally symmetric space.}
\medskip

 Following  the work of Donaldson \cite{Don99} and Semmes \cite{Sem92} in the compact setting, we reinterpret the geodesics as a solution to a homogeneous complex Monge-Amp\`ere equation. Weak geodesics are introduced as an envelope of functions:
$$\Phi(z,\zeta)=\sup\{u(z,\zeta)/ u \in \mathcal{F}(\Omega\times A,\Psi)\}$$
Our second  main result is to establish regularity properties of geodesics in the ball by adapting the celebrated result of Bedford-Taylor\cite{BT76}:

\medskip
\noindent{ \bf Theorem B.}
\textit{
Let $\mathbb{B}$ be the  unit ball in $\mathbb{C}^n$. Let $\varphi_0$ and  $\varphi_1$ be the end geodesic  points which are $C^{1,1}$. Then the Perron-Bremermann envelope
$$\Phi(z,\zeta)=\sup\{u(z,\zeta)/ u \in \mathcal{F}(\Omega\times A,\Psi)\}$$
admits second-order partial derivates almost everywhere with respect to variable $z\in\B$ which  locally bounded uniformly with respect to $\zeta \in A$  , i.e  for any compact subset  $K\subset \mathbb{B}$ there exists  $C$ which depend on $K,\varphi_{0}$ and  $\varphi_{1}$ such that   $$\Vert D^2_z\Phi\Vert_{ L^{\infty}(K\times A)}\leq C.$$}
\medskip

The existence of local K\"ahler-Einstein metrics was studied by  Guedj, Kolev and Yeganefar \cite{GKY13} in  bounded smooth strongly pseudoconvex domains which are circled. This is equivalent to the resolution of the following Dirichlet problem 
$$(MA)_1\quad\left\{
\begin{array}{ll}
  (dd^c \varphi)^n=\frac{e^{- \varphi} \mu}{\int_{\Omega} e^{-\varphi} d\mu }, & \hbox{in $\Omega$} \\
      \varphi=0 ,& \hbox{ on $\Omega$}
      \end{array}
    \right.
   $$       
They treated also the following family of  Dirichlet problems 
$$(MA)_t\quad\left\{
\begin{array}{ll}
  (dd^c \f_t)^n=\frac{e^{-t \f_t} \mu}{\int_{\Omega} e^{-t \f_t} d\mu } ,& \hbox{in $\Omega$} \\
      \f_t=0, & \hbox{ on $\Omega$}
      \end{array}
    \right.
   $$  
showing that there is a solution for $t< (2n)^{1+1/n}(1 + 1/n)^{(1+1/n)}$. We apply our  study of the geodesics problem and an idea of \cite{DR15,DG16} to  prove that the  existence of a solution to $(MA)_t$ is equivalent to the  coercivity  of the Ding functional:
 
 \medskip
\noindent{\bf Theorem C.} 
\textit {Let $\Omega\subset \mathbb{C}^n$ be a smooth strongly pseudo-convex  circled domain. If there exists $\varepsilon(t), M(t)>0$ such that,
$$\mathcal{F}_{t}(\psi)\leq \varepsilon(t) E(\psi)+M(t)\;\; \;\;\forall \psi \in \mathcal{H},$$
then $(MA)_t$ admits a $S^1$-invariant smooth strictly plurisubharmonic function solution.\\
Conversely if $(MA)_t$ admits such a solution $\varphi_t$ and $\Omega$ is strictly $\varphi_t$-convex, then there exists $\varepsilon(t), M(t)>0$ such that,
$$\mathcal{F}_{t}(\psi)\leq \varepsilon(t) E(\psi)+M(t)\;\; \;\;\forall \psi \in \mathcal{H}.$$ }
\medskip

The organization of the paper is as follows.
\begin{itemize}
  \item Section 1 is devoted to preliminary results and the definition of the space $\mathcal{H}$ and its geometry. 
  \item In Section 2 we show that the  geodesics are continuous (sometimes even Lipschitz)
   up to the boundary of $\Omega\times A$.
   \item In section 3 we prove theorem B.
  \item Finally, we prove Theorem C in Section 4.
  \end{itemize}
\begin{ackn}It is a pleasure to thank my supervisors Vincent Guedj and Said Asserda, for their support, suggestions and encouragement. I thank  Ahmed Zeriahi for very useful discussions and 
suggestions. Also, i would like to thank  Tat Dat T\^{o}  and Zakarias Sj\"{o}str\"{o}m Dyrefelt for a very careful reading of the preliminary version of this paper and very useful discussions.
\end{ackn}
\section{Mabuchi geometry in pseudoconvex domains}
In this section we will study the geometry of the space of plurisubharmonic functions in strongly pseudoconvex domain, based upon works of Mabuchi \cite{Mab87}, Semmes\cite{Sem92} and Donaldson \cite{Don99}, as it was clarified through lecture notes of Guedj \cite{G14} and  Kolev \cite {Kol12}.
\subsection{Preliminaries}
In this section we recall some analytic tools which will be used in the sequel.
Let $\Omega\Subset\mathbb{C}^n$ be a smooth  pseudoconvex bounded domain.
Recall that a bounded domain $\Omega \Subset \mathbb{C}^ n$ is strictly pseudoconvex if  there exists a smooth
function  $\rho$ defined  in neighbourhood  $\Omega '$ of $\bar{\Omega}$ 
such that  $\Omega=\{z\in \Omega' \;\;/ \rho(z)<0\}$ with $dd^c \rho >0$,
where
$$
d:= \partial+ \bar{\partial}\;,\; d^c:={i\over 2\pi}(\partial-\bar{\partial})
$$
\begin{defi}
We let $PSH(\Omega)$ denote the set of plurisubharmonic functions in $\Omega$.
In particular a function $\varphi \in PSH(\Omega)$ is $L^1_{loc}$, upper semi-continuous and such that
$$
dd^c\varphi\geq 0
$$
in the weak sense of positive currents.
\end{defi}

The following cone of "test functions" has been introduced by 
Cegrell \cite{Ceg98}:
 \begin{defi}\cite{Ceg98}
 We let $\mathcal{E}_0(\Omega)$ denote the convex cone of all bounded plurisubharmonic functions $\varphi$ defined in $\Omega$ such that $\lim_{z\to \zeta}\varphi(\zeta)=0$, for every $\xi\in\partial\Omega$, and $\int_{\Omega}(dd^c\varphi)^n<+\infty$.
 \end{defi}
 
 \begin{defi}\cite{Ceg98}
 The class  $\mathcal{E}^p(\Omega)$ is a set of functions $u$ for
which there exists a sequence of functions $u_j \in\mathcal{E}_0(\Omega)$decreasing
towards $u$ in all of $\Omega$, and so that $\sup_j\int_{\Omega}(-u_j)^p (dd^cu_j)^n<+\infty$.
 \end{defi}
 We will need the following 
 maximum principle:
 \begin{prop}\cite{BT76}
 Let $u$,$v$ be  locally bounded plurisubharmonic functions in $\Omega$ such that $\liminf_{z\to \partial \Omega}(u-v)\geq 0$ .
 Then
$$
(dd^c u)^n\leq  (dd^c v)^n \Longrightarrow  u\leq v \;in\; \Omega 
.$$
 \end{prop}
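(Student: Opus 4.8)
The plan is to obtain this maximum principle in two steps: first establish the Bedford--Taylor integral comparison inequality on a suitable relatively compact set, and then upgrade the non-strict Monge--Amp\`ere inequality to a strict one by perturbing with the strictly plurisubharmonic defining function $\rho$. I argue by contradiction: assuming that $u\le v$ fails, the open set $U:=\{u>v\}$ is nonempty, and the boundary hypothesis pins down the sign of $u-v$ near $\partial\Omega$, so that $U\Subset\Omega$. This relative compactness is essential, since it is precisely what allows Stokes' theorem to be applied on $U$ with no boundary contribution.

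The technical core is the comparison inequality. For bounded plurisubharmonic $a,b$ with $\liminf_{z\to\partial\Omega}(a-b)\ge 0$ one has
\[
\int_{\{a<b\}}(dd^c b)^n\ \le\ \int_{\{a<b\}}(dd^c a)^n .
\]
I would prove this following Bedford--Taylor: approximate $a,b$ by decreasing sequences of continuous plurisubharmonic functions, integrate by parts repeatedly on the relatively compact sublevel sets to compare the mixed Monge--Amp\`ere masses, and pass to the limit using the continuity of the Monge--Amp\`ere operator along decreasing sequences. Controlling these limits for merely bounded (non-smooth) functions is the main obstacle, and is exactly where the Bedford--Taylor theory of the Monge--Amp\`ere operator on $PSH\cap L^{\infty}$ is needed.

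To conclude, I perturb. For small $\epsilon>0$ set $u_\epsilon:=u+\epsilon\rho$, which is bounded plurisubharmonic, satisfies $u_\epsilon\le u$, and keeps the same boundary behaviour relative to $v$ because $\rho\to 0$ at $\partial\Omega$. The set $E_\epsilon:=\{v<u_\epsilon\}$ is then nonempty and relatively compact in $\Omega$ for $\epsilon$ small enough. By positivity of the mixed Monge--Amp\`ere measures,
\[
(dd^c u_\epsilon)^n\ \ge\ (dd^c u)^n+\epsilon^{n}\,(dd^c\rho)^n ,
\]
and $(dd^c\rho)^n$ is a strictly positive measure. Applying the comparison inequality to the pair $(v,u_\epsilon)$ on $E_\epsilon$, and then invoking the Monge--Amp\`ere hypothesis relating the masses of $u$ and $v$, the three estimates combine and one is left with
\[
\epsilon^{n}\int_{E_\epsilon}(dd^c\rho)^n\ \le\ 0 .
\]
This is impossible, since $E_\epsilon$ is open and nonempty while $(dd^c\rho)^n>0$. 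The contradiction forces $U=\emptyset$, that is $u\le v$ throughout $\Omega$.
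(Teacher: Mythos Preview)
The paper does not prove this proposition; it is recalled from \cite{BT76} without argument, so there is nothing to compare against on that side. Your overall plan---the Bedford--Taylor integral comparison inequality followed by a perturbation with the strictly plurisubharmonic defining function $\rho$---is indeed the standard route to the maximum principle.

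There is, however, a genuine gap in your argument, and it is inherited from a sign slip in the statement as printed: from $\liminf_{z\to\partial\Omega}(u-v)\ge 0$ together with $(dd^c u)^n\le(dd^c v)^n$ the correct conclusion is $u\ge v$, not $u\le v$ (and this is how the paper actually applies the result later). Your proof breaks in two places because of this. First, the boundary hypothesis says that $u-v$ is asymptotically \emph{nonnegative} near $\partial\Omega$, so it gives no reason for $U=\{u>v\}$ to stay away from the boundary; on the unit ball with $u\equiv 0$ and $v=|z|^2-1$ one has $\liminf(u-v)=0$ and $(dd^c u)^n=0\le(dd^c v)^n$, yet $\{u>v\}=\Omega$ and $u\le v$ fails everywhere. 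Second, even granting the comparison inequality for the pair $(v,u_\epsilon)$ on $E_\epsilon$, the chain of estimates does not close: you obtain $\int_{E_\epsilon}(dd^c u)^n+\epsilon^n\!\int_{E_\epsilon}(dd^c\rho)^n\le\int_{E_\epsilon}(dd^c u_\epsilon)^n\le\int_{E_\epsilon}(dd^c v)^n$, while the hypothesis $(dd^c u)^n\le(dd^c v)^n$ points the \emph{same} way, so no contradiction emerges. If you instead aim for the corrected conclusion $u\ge v$, work on $\{u<v\}$, and perturb $v$ (rather than $u$) to $v+\epsilon\rho$, your scheme goes through verbatim.
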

 \subsection{The space of plurisubharmonic potentials}
We begin this section by defining the Mabuchi space of plurisuharmonic functions in $ \Omega$. 
\begin{defi} 
The  Mabuchi space  of plurisubharmonic functions in $\Omega$ is:
$$
\mathcal{H}:=\{\varphi\in C^{\infty} (\bar{\Omega},\mathbb{R})/ dd^c\varphi>0 \; in\; \bar{\Omega}\;  \;\varphi=0\; on\;\partial\Omega\}
$$
\end{defi}
We now  consider the tangent space of  $\mathcal{H}$ in every  
$C^{\infty}(\bar{\Omega},\mathbb{R})$.
\begin{defi}
The tangent space of  $\mathcal{H}$ at point $\varphi$, we denote by $T_{\varphi}\mathcal{H}$  is the linearisation of 
$\mathcal{H}$  defined by:
$$
T_{\varphi}\mathcal{H}=\{\gamma'(0)/ \;\;\varphi :[-\varepsilon,\varepsilon]\rightarrow \mathcal{H}\;\; and\;\; \gamma(0)=\varphi\}
.$$
\end{defi}
 The tangent space of $\mathcal{H}$ at $\varphi$ can be identified with 
 $$
 T_{\varphi}\mathcal{H}\cong\{\xi \in C^{\infty}(\bar{\Omega} ,\mathbb{R})\; /\; \;\xi=0 \;\; on  \; \partial \Omega\}
 $$

Indeed. Let $\xi \in \{\xi \in C^{\infty}(\bar{\Omega} ,\mathbb{R})\; /\; \;\xi=0 \; on  \; \partial \Omega\}$,  
we put $\gamma(s):=\varphi+s\xi $ for  $s$ close enough to  $0$ we have   $\gamma_s\in \mathcal{H}$, and
$$
\gamma(0)=\varphi\;\; and \;\;\gamma'(0)=\xi
$$ this implies that $\xi\in T_{\varphi}\mathcal{H}$ hence
$$
\{\xi \in C^{\infty}(\bar{\Omega},\mathbb{R})\; /\; \;\xi|_{\partial \Omega}=0\}\subset T_{\varphi}\mathcal{H}
.$$
Conversely, let  $\gamma\in \mathcal{H}$ which  gives  $\gamma_t|_{\partial\Omega}=0 $ 
 for every  $t$. In particular
 $\dot{\gamma}(0)|_{\partial\Omega}=0$,
therefore $$\xi=\dot{\gamma}(0)\in \{\xi \in C^{\infty}(\bar{\Omega} ,\mathbb{R})\; /\; \;\xi=0\; on  \; \partial \Omega\}.$$
\begin{defi}\cite{Mab87} 
The Mabuchi metric is the $L^2$ Riemanniann  metric. It is defined by
$$
<<\psi_1,\psi_2>>_{\varphi}:=\int_{\Omega}\psi_1\psi_2 (dd^c\varphi)^n ,$$
where $\varphi\in \mathcal{H},\psi_1,\psi_2\in T_{\varphi}\mathcal{H}$.
\end{defi}
   \subsection{Mabuchi geodesics}
   Geodesics between two points $\varphi_0$, $\varphi_1$ in $\mathcal{H}$ are defined as the extremals of
the Energy functional
$$
\varphi\longmapsto H(\varphi):= \frac{1}{2}\int_{0}^1\int_{\Omega}(\dot{\varphi}_t)^2( dd^c\varphi_t)^n
.$$
where $\varphi=\varphi_t$ is a path in $\mathcal{H}$ joining $\varphi_0$ to $\varphi_1$. The geodesic equation is obtained by computing the Euler-Lagrange equation of the functional $H$.
\begin{thm} 
The geodesic equation is
\begin{equation}\label{1}
    \ddot{\varphi}(t)-|\nabla\;\dot{\varphi}(t)|^{2}_{\varphi(t)}=0\;\;
\end{equation}
where $\nabla$ is the gradient  relative to the metric $\omega_{\varphi}=dd^c\varphi$.
\end{thm}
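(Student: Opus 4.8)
The equation \eqref{1} is nothing but the Euler--Lagrange equation attached to the energy $H$, so the plan is to compute the first variation of $H$ along a path $\varphi=(\varphi_t)_{t\in[0,1]}$ in $\mathcal H$ joining $\varphi_0$ to $\varphi_1$, and read off the stationarity condition. Fix such a path and a variation $\varphi^s_t:=\varphi_t+s\,\eta_t$, where for each $t$ one has $\eta_t\in C^{\infty}(\bar\Omega,\mathbb{R})$ with $\eta_t|_{\partial\Omega}=0$ (so that $\varphi^s_t\in\mathcal H$ for $|s|$ small), and with $\eta_0\equiv\eta_1\equiv 0$ because the endpoints of the path are kept fixed. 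Writing $\mu_t:=(dd^c\varphi_t)^n$ and differentiating under the integral sign (legitimate since everything is smooth in $(t,s,z)$), one gets
\begin{equation*}
\frac{d}{ds}\Big|_{s=0}H(\varphi^s)=\int_0^1\!\!\int_\Omega \dot\varphi_t\,\dot\eta_t\,\mu_t\,dt\;+\;\frac n2\int_0^1\!\!\int_\Omega(\dot\varphi_t)^2\,dd^c\eta_t\wedge(dd^c\varphi_t)^{n-1}\,dt .
\end{equation*}

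For the first double integral I would integrate by parts in the $t$ variable: since $\eta_0\equiv\eta_1\equiv 0$ the boundary terms at $t=0,1$ disappear and one is left with $-\int_0^1\!\int_\Omega\eta_t\big(\ddot\varphi_t\,\mu_t+\dot\varphi_t\,\dot\mu_t\big)\,dt$, where $\dot\mu_t=n\,dd^c\dot\varphi_t\wedge(dd^c\varphi_t)^{n-1}$. For the second double integral I would integrate by parts twice in the space variables (Stokes on $\Omega$ against the closed form $(dd^c\varphi_t)^{n-1}$, with no boundary contribution because $\eta_t$ vanishes on $\partial\Omega$) to transfer $dd^c$ onto $(\dot\varphi_t)^2$, and then use the elementary identity $dd^c(f^2)=2\,df\wedge d^c f+2f\,dd^c f$. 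The contribution of the term $2\dot\varphi_t\,dd^c\dot\varphi_t$, once wedged with $(dd^c\varphi_t)^{n-1}$, is exactly $\dot\varphi_t\,\dot\mu_t$ and cancels the matching term coming from the first integral. What survives is
\begin{equation*}
\frac{d}{ds}\Big|_{s=0}H(\varphi^s)=\int_0^1\!\!\int_\Omega\eta_t\Big(-\ddot\varphi_t\,(dd^c\varphi_t)^n+n\,d\dot\varphi_t\wedge d^c\dot\varphi_t\wedge(dd^c\varphi_t)^{n-1}\Big)\,dt .
\end{equation*}
As this must vanish for every admissible family $(\eta_t)$, the fundamental lemma of the calculus of variations forces the bracketed measure to be identically zero, i.e. $\ddot\varphi_t\,(dd^c\varphi_t)^n=n\,d\dot\varphi_t\wedge d^c\dot\varphi_t\wedge(dd^c\varphi_t)^{n-1}$ for all $t$.

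It remains to recognise the right-hand side as $|\nabla\dot\varphi_t|^2_{\varphi_t}\,(dd^c\varphi_t)^n$. This is a pointwise linear-algebra identity: at a fixed point one chooses holomorphic coordinates in which the complex Hessian $(\partial_j\partial_{\bar k}\varphi_t)$ is the identity at that point; then a direct computation shows that both $n\,d f\wedge d^c f\wedge\omega_{\varphi_t}^{n-1}\big/\omega_{\varphi_t}^n$ and $|\nabla f|^2_{\omega_{\varphi_t}}$ reduce, for $f=\dot\varphi_t$, to $\sum_j|\partial_j f|^2$ (up to the universal constant fixed by the normalisations), which in invariant form is $\sum_{j,k}\varphi_t^{j\bar k}\,\partial_j f\,\partial_{\bar k} f$, where $(\varphi_t^{j\bar k})$ is the inverse complex Hessian of $\varphi_t$. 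Dividing through by $(dd^c\varphi_t)^n$, a smooth positive volume form since $\varphi_t\in\mathcal H$, then yields $\ddot\varphi_t-|\nabla\dot\varphi_t|^2_{\varphi_t}=0$, that is \eqref{1}.

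The two integrations by parts are routine; the one place requiring care is the bookkeeping of the term produced by differentiating the volume form $(dd^c\varphi_t)^n$ — one must see precisely that its ``$2f\,dd^c f$'' part annihilates the leftover $\dot\varphi_t\,\dot\mu_t$ term coming from the $t$-integration by parts — together with the final pointwise identity, whose multiplicative constant should be checked against the normalisation $d^c=\tfrac{i}{2\pi}(\partial-\bar\partial)$ fixed above.
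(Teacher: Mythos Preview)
Your proposal is correct and follows essentially the same route as the paper: compute the first variation of $H$, integrate by parts in $t$ using $\eta_0=\eta_1=0$, integrate by parts in space via Stokes, and read off the Euler--Lagrange equation $\ddot\varphi_t(dd^c\varphi_t)^n=n\,d\dot\varphi_t\wedge d^c\dot\varphi_t\wedge(dd^c\varphi_t)^{n-1}$. One small point: when you move $dd^c$ from $\eta_t$ onto $(\dot\varphi_t)^2$, one of the two boundary terms is $\int_{\partial\Omega}(\dot\varphi_t)^2\,d^c\eta_t\wedge(dd^c\varphi_t)^{n-1}$, which vanishes not because $\eta_t|_{\partial\Omega}=0$ (that does not kill $d^c\eta_t$) but because $\dot\varphi_t|_{\partial\Omega}=0$; the paper invokes this explicitly, and you should too.
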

\begin{proof}
We need to compute the Euler-Lagrange equation of the Energy functional. Let $(\Phi_{s,t})$ be a variation of $\varphi$
with fixed end points,
$$\phi_{0,t}=\varphi_t, \phi_{s,0}=\varphi_0, \phi_{s,1}=\varphi_1\; and \;\phi_{s,t}=0 \; on \; \partial \Omega$$
Set $\psi_t:=\frac{\partial \phi}{\partial s}|_{s=0}$ and observe that $\psi_0\equiv\psi_1\equiv 0$ and $\psi_t=0$ on $\partial \Omega$. Thus
$$\phi_{s,t}=\varphi_t+s\psi_t+\circ(s)\; and \; \frac{\partial \phi_{s,t}}{\partial t}=\dot{\varphi}+s\dot{\psi_t}+\circ(s)$$
and 
 $$(dd^c\phi_{s,t})^{n}=(dd^c(\varphi_t+s\psi_t))^n=(dd^c\varphi_t)^n +s.n dd^c\psi_t\wedge( dd^c\varphi_t)^{n-1}.$$
 A direct computation yields 
 \begin{eqnarray*}
 H(\phi_{s,t})&=&\frac{1}{2}\int_0^1\int_{\Omega}(\dot{\phi_{s,t}})^2 (dd^c\phi_{s,t})^n dt\\
 &=&  H(\varphi_{t})+s\int_0^1\int_{\Omega}\dot{\varphi_t}\dot{\psi}(dd^c\varphi_t)^n dt \\
 &&+\frac{ns}{2}\int_0^1\int_{\Omega}\dot{\varphi_t}^2dd^c\psi_t\wedge( dd^c\varphi_t)^{n-1}dt.
 \end{eqnarray*}
 Integration by part, and the fact $\psi_0\equiv\psi_1\equiv 0$  yields
 $$\int_0^1\int_{\Omega}\dot{\varphi_t}\dot{\psi}(dd^c\varphi_t)^n dt=-\int_0^1\int_{\Omega}\psi_t\{\ddot{\varphi_t}(dd^c\varphi_t)^n +n \dot{\varphi_t}dd^c\dot{\varphi_t}\wedge (dd^c\varphi_t)^{n-1}\} dt.$$ 
 And we have also by Stokes and the fact $\dot{\varphi_t}=0$ on $\partial \Omega$
 $$\int_0^1 \int_{\Omega}(\dot{\varphi})^2 dd^c\psi_t\wedge (dd^c\varphi_t)^{n-1}dt=2\int_0^1 \int_{\Omega}\psi_t(d\dot{\varphi}\wedge d^c\dot{\varphi_t}+\dot{\varphi_t}dd^c\dot{\varphi_t}\wedge (dd^c\varphi_t)^{n-1})dt$$
 hence $$
 H(\varphi_{s,t})= H(\varphi_{t})+s\int_0^1\int_{\Omega}\psi_t\left\{ -\ddot{\varphi_t}(dd^c\varphi_t)^n+ n d\dot{\varphi_t}\wedge d^c\dot{\varphi}\wedge(dd^c\varphi_t)^{n-1} \right\}dt +\circ(s)$$
 which implies 
 \begin{eqnarray*}
 &0=& d_{\varphi_t}H.\psi_t\\
 &=&\lim_{s \to 0}\frac{H(\varphi_{s,t})-H(\varphi_t)}{s}\\
 &=&\int_0^1\int_{\Omega}\psi_t\left\{ -\ddot{\varphi_t}(dd^c\varphi_t)^n+ n d\dot{\varphi_t}\wedge d^c\dot{\varphi}\wedge(dd^c\varphi_t)^{n-1} \right\}dt.
 \end{eqnarray*}
 Therefore $(\varphi_t)$ is critical point of $H$ if and only if
$$\ddot{\varphi_t}(dd^c\varphi_t)^n=n d\dot{\varphi_t}\wedge d^c\dot{\varphi}\wedge(dd^c\varphi_t)^{n-1}.$$
\end{proof}
\subsection{Levi-Civita connection }
As for Riemanniann manifolds of finite dimension. One can find the local expression of the Levi-Civita connection by polarizing the geodesic equation.
\begin{defi} 
We define the covariant derivative of the vector field $\psi_t$ along the path  $\varphi_t$ in $\mathcal{H}$ by the formula
$$D\psi:=\frac{d\psi}{dt}-< \nabla\psi,\nabla\;\dot{\varphi}>_{\varphi}$$
\end{defi}
\begin{thm} 
$D$ is the Levi-Civita connection.
\end{thm}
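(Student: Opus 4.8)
The plan is to verify the two defining axioms of a Levi--Civita connection for $D$: that it is \emph{torsion-free} (symmetric) and \emph{metric-compatible} with the Mabuchi $L^2$-metric $\langle\langle\cdot,\cdot\rangle\rangle_\varphi$. Since $\mathcal{H}$ is an open convex subset of the affine space $C^\infty(\bar\Omega,\R)$ (with the linear-subspace condition on $\partial\Omega$), the coordinate vector fields commute and one can work with explicit variations. I would first record the needed first-variation formula for the measure: along a path $\varphi_t$,
$$
\frac{d}{dt}(dd^c\varphi_t)^n = n\, dd^c\dot\varphi_t \wedge (dd^c\varphi_t)^{n-1},
$$
and the integration-by-parts identity (Stokes, using that all tangent vectors vanish on $\partial\Omega$)
$$
\int_\Omega f\, dd^c g \wedge (dd^c\varphi)^{n-1} = -\int_\Omega df\wedge d^c g\wedge (dd^c\varphi)^{n-1} = \int_\Omega g\, dd^c f\wedge (dd^c\varphi)^{n-1},
$$
which is exactly the computation already used in the proof of the geodesic equation.

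For symmetry, I would consider a two-parameter family $\varphi_{s,t}$ in $\mathcal{H}$ with $\partial_s\varphi|_0 = \eta$, $\partial_t\varphi|_0=\xi$ viewed as vector fields along the surface, and check that $D_{\partial_t}\partial_s\varphi - D_{\partial_s}\partial_t\varphi = 0$. Writing out the definition,
$$
D_{\partial_t}(\partial_s\varphi) = \partial_t\partial_s\varphi - \langle \nabla(\partial_s\varphi),\nabla(\partial_t\varphi)\rangle_\varphi,
$$
the mixed partial $\partial_t\partial_s\varphi = \partial_s\partial_t\varphi$ cancels, and the bracket term $\langle\nabla\xi,\nabla\eta\rangle_\varphi$ is manifestly symmetric in $\xi,\eta$, so the torsion vanishes. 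This step is essentially formal once the covariant derivative is unwound.

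The substantive step is metric compatibility: I must show that for vector fields $\psi^{(1)}_t,\psi^{(2)}_t$ along a path $\varphi_t$,
$$
\frac{d}{dt}\langle\langle \psi^{(1)}_t,\psi^{(2)}_t\rangle\rangle_{\varphi_t} = \langle\langle D\psi^{(1)}_t,\psi^{(2)}_t\rangle\rangle_{\varphi_t} + \langle\langle \psi^{(1)}_t, D\psi^{(2)}_t\rangle\rangle_{\varphi_t}.
$$
Expanding the left side by the product rule produces three terms: $\int \dot\psi^{(1)}\psi^{(2)}(dd^c\varphi)^n$, $\int \psi^{(1)}\dot\psi^{(2)}(dd^c\varphi)^n$, and the "moving measure" term $n\int \psi^{(1)}\psi^{(2)}\, dd^c\dot\varphi\wedge(dd^c\varphi)^{n-1}$. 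The first two terms match the $d\psi/dt$ pieces of the right side. The remaining task is to show that the moving-measure term equals $-n\int(\langle\nabla\psi^{(1)},\nabla\dot\varphi\rangle_\varphi\,\psi^{(2)} + \psi^{(1)}\langle\nabla\psi^{(2)},\nabla\dot\varphi\rangle_\varphi)(dd^c\varphi)^n$ up to sign; this is where the identity $\langle\nabla f,\nabla g\rangle_\varphi(dd^c\varphi)^n = n\, df\wedge d^c g\wedge(dd^c\varphi)^{n-1}$ and the integration-by-parts formula above are applied to $dd^c(\psi^{(1)}\psi^{(2)}) = \psi^{(1)}dd^c\psi^{(2)} + \psi^{(2)}dd^c\psi^{(1)} + d\psi^{(1)}\wedge d^c\psi^{(2)} + d\psi^{(2)}\wedge d^c\psi^{(1)}$, wedged with $(dd^c\varphi)^{n-1}$, after transferring the $dd^c\dot\varphi$ onto the product $\psi^{(1)}\psi^{(2)}$. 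The main obstacle is bookkeeping: correctly tracking the factors of $n$ and the signs produced by Stokes' theorem across the $\partial$-$\bar\partial$ symmetrizations, and making sure the boundary terms genuinely vanish (which they do, since every vector field in $T_\varphi\mathcal{H}$ and $\dot\varphi$ itself restrict to $0$ on $\partial\Omega$). Once the two terms are matched, uniqueness of the Levi--Civita connection (the fundamental lemma of Riemannian geometry, which holds formally in this infinite-dimensional setting exactly as in finite dimensions) identifies $D$ as claimed.
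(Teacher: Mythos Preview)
Your proposal is correct and follows essentially the same route as the paper: verify torsion-freeness via the symmetry of mixed partials and of $\langle\nabla\xi,\nabla\eta\rangle_\varphi$, and verify metric compatibility by differentiating $\int\psi_1\psi_2(dd^c\varphi)^n$ and converting the moving-measure term $n\int\psi_1\psi_2\,dd^c\dot\varphi\wedge(dd^c\varphi)^{n-1}$ into the gradient terms via Stokes and the identity $n\,df\wedge d^cg\wedge(dd^c\varphi)^{n-1}=\langle\nabla f,\nabla g\rangle_\varphi(dd^c\varphi)^n$. The only cosmetic difference is that the paper does the integration by parts in one step on $d\bigl(\psi_1\psi_2\,d^c\dot\varphi\wedge(dd^c\varphi)^{n-1}\bigr)$ and then uses the Leibniz rule for $d(\psi_1\psi_2)$, which is slightly more direct than first transferring the full $dd^c$ onto $\psi_1\psi_2$ as you describe; either way the bookkeeping closes.
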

\begin{proof} 
To show that $D$ is a Levi-Civita connection, we must show that the connection $D$ is metric-compatible and a  torsion-free.\\
i) Metric-compatibility: Let $\psi_1 $, $\psi_2$  be two vector fields 
\begin{eqnarray*}
  {d \over dt}<<\psi_1,\psi_2>>_{\varphi} &=& {d\over dt}\int _{\Omega}\psi_1\psi_2 (dd^c\varphi)^n \\
   &=& \int _{\Omega}(\dot{\psi_1}\psi_2+\psi_1 \dot{\psi_2})(dd^c\varphi)^n+n\psi_1\psi_2 dd^c\dot{\varphi}\wedge (dd^c\varphi)^{n-1} \\
   &=&\int_{\Omega}(\dot{\psi_1}\psi_2+\psi_1\dot{\psi_2}-<\nabla\; (\psi_1\psi_2),\nabla\; \dot{\varphi}>_{\varphi})(dd^c\varphi)^n  \\
   &=&\int_{\Omega}((\dot{\psi_1}-<\nabla\psi_1,\nabla\dot{\varphi}>_{\varphi})\psi_2(dd^c\varphi)^n\\
   &&+\int_{\Omega}\psi_1(\dot{\psi_2}-<\nabla\; \psi_2,\nabla\;\dot{\varphi}>_{\varphi})(dd^c\varphi)^n \\
   &=& <<D \psi_1,\psi_2>>_{\varphi}+<<\psi_1,D \psi_2>>_{\varphi}
   \end{eqnarray*}
   
(The passage from the second line to the third line is a result  of the equation
$$d( \psi_1\psi_2 d^c\dot{\varphi}\wedge (dd^c\varphi)^{n-1})=d(\psi_1\psi_2)\wedge d^c\dot{\varphi}\wedge (dd^c\varphi)^{n-1}+\psi_1\psi_2dd^c\dot{\varphi}\wedge (dd^c\varphi)^{n-1}$$ and Stokes theorem).\\
ii) $D$ is a torsion-free, because
$$D_s\frac{d \varphi}{ dt}=D_t\frac{d\varphi}{ ds}.$$

Thus $D$ is a Levi-Civita connection.
\end{proof}
\subsection{ Curvature tensor}
We will define  the  curvature tensor and the  sectional curvature  and we will give those expressions. 
We will finish by proving that the space of plurisubharmonic functions is locally symmetric. 
We start by giving some definitions and conventions.
\begin{defi}
Let  $\psi$ and $\theta$ be two functions in the tangent space of $\mathcal{H}$ at $\varphi$.
The Poisson  bracket of $\psi$ and $\theta$ compared to the  form $\omega_{\varphi}=dd^c\varphi$ is
$$
\{\psi,\theta\}=\{\psi,\theta\}_{\varphi}:=i\sum_{\alpha,\beta=1}\varphi^{\alpha\bar{\beta}}\left(\frac{\partial\psi}{\partial \bar{z}_{\beta}}\frac{\partial \theta}{\partial z_{\alpha}}-\frac{\partial \psi}{\partial z_{\alpha}}\frac{\partial\theta}{\partial \bar{z}_{\beta}}\right)
$$
where $(\varphi^{\alpha\bar{\beta}})$ is the inverse matrix of  $(\varphi_{\alpha\bar{\beta}})$.
\end{defi}
\begin{lem}\label{poi}
Let $\psi$, $\theta$ and $\eta$  three functions  belonging to the tangent space of  $\mathcal{H}$ at $\varphi$. The Poisson bracket satisfies the following properties :

i)\;$\{\psi,\theta\}=-\{\theta,\psi\}$

ii) \;$\{\psi,\theta\}=\omega_{\varphi}(X_{\psi},X_{\theta})$

iii)\; $\{\psi,\theta+\eta\}=\{\psi,\theta\}+\{\psi,\eta\}$

iv)\;$[X_{\psi},X_{\theta}]=X_{\psi}(X_{\theta})-X_{\theta}(X_{\psi})=X_{\{\psi,\theta\}}$

v)\;$\int_{\Omega}\{\psi,\theta\}\eta (dd^c\varphi)^n=\int_{\Omega}\psi\{\theta,\eta\}(dd^c\varphi)^n$

vi)\;$D\{\psi,\theta\}=\{D\psi,\theta\}+\{\psi,D\theta\}$. \\
where  $X_{\psi}:=i\nabla\psi$ and $[,]$ is the Lie bracket.\\
\end{lem}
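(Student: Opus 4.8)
The plan is to treat the six items grouped by difficulty rather than strictly in order: (i), (iii), (ii) by inspection, (iv) and (v) by standard symplectic arguments, and the Leibniz rule (vi) for the covariant derivative as the only genuine computation. Items (i) and (iii) are immediate from the defining formula, which is $\R$-bilinear in the pair $(\psi,\theta)$ and, since $(\varphi^{\alpha\bar\beta})$ is Hermitian, manifestly antisymmetric under $\psi\leftrightarrow\theta$. For (ii) the point is that $X_\psi=i\nabla\psi$ is exactly the Hamiltonian vector field of $\psi$ relative to $\omega_\varphi=dd^c\varphi$, normalized so that $\iota_{X_\psi}\omega_\varphi=d\psi$; inverting the matrix $(\varphi_{\alpha\bar\beta})$ in local coordinates gives the coordinate expression of $X_\psi$, and then $\omega_\varphi(X_\psi,X_\theta)=d\psi(X_\theta)=X_\theta(\psi)$ expands to precisely $i\sum_{\alpha,\beta}\varphi^{\alpha\bar\beta}\bigl(\partial_{\bar z_\beta}\psi\,\partial_{z_\alpha}\theta-\partial_{z_\alpha}\psi\,\partial_{\bar z_\beta}\theta\bigr)$. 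This is a short coordinate check once the normalization built into $d^c$ has been fixed.

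For (iv) I would invoke Cartan's formula together with the closedness of $\omega_\varphi=dd^c\varphi$: one has $\mathcal{L}_{X_\psi}\omega_\varphi=d\,\iota_{X_\psi}\omega_\varphi+\iota_{X_\psi}\,d\omega_\varphi=d(d\psi)=0$, so every $X_\psi$ is symplectic, and then $\iota_{[X_\psi,X_\theta]}\omega_\varphi=\mathcal{L}_{X_\psi}\iota_{X_\theta}\omega_\varphi-\iota_{X_\theta}\mathcal{L}_{X_\psi}\omega_\varphi=\mathcal{L}_{X_\psi}(d\theta)=d\bigl(X_\psi\theta\bigr)=d\{\psi,\theta\}$ by (ii), which forces $[X_\psi,X_\theta]=X_{\{\psi,\theta\}}$. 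Item (v) then follows by integration by parts. By (ii), $\{\psi,\theta\}=X_\psi\theta=-X_\theta\psi$, and since $\mathcal{L}_{X_\theta}\omega_\varphi=0$ the field $X_\theta$ also preserves the volume form $(dd^c\varphi)^n$; hence, $\psi$ and $\eta$ vanishing on $\partial\Omega$, Stokes' theorem gives $\int_\Omega X_\theta(\psi\eta)\,(dd^c\varphi)^n=\int_{\partial\Omega}\iota_{X_\theta}\bigl(\psi\eta\,(dd^c\varphi)^n\bigr)=0$, so $\int_\Omega(X_\theta\psi)\,\eta\,(dd^c\varphi)^n=-\int_\Omega\psi\,(X_\theta\eta)\,(dd^c\varphi)^n$. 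Combining, $\int_\Omega\{\psi,\theta\}\,\eta\,(dd^c\varphi)^n=-\int_\Omega(X_\theta\psi)\,\eta\,(dd^c\varphi)^n=\int_\Omega\psi\,(X_\theta\eta)\,(dd^c\varphi)^n=\int_\Omega\psi\,\{\theta,\eta\}\,(dd^c\varphi)^n$. Alternatively one can rewrite $\{\psi,\theta\}\,(dd^c\varphi)^n$ as a constant multiple of $\bigl(\partial\psi\wedge\bar\partial\theta-\partial\theta\wedge\bar\partial\psi\bigr)\wedge\omega_\varphi^{n-1}$ and integrate by parts directly.

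Item (vi) is where the real computation sits. Along a path $\varphi=\varphi_t$ I would set $\{\psi,\theta\}_t:=\{\psi,\theta\}_{\varphi(t)}$ and differentiate in $t$, the $t$-dependence entering both through $\psi_t,\theta_t$ and through the inverse metric via $\tfrac{d}{dt}\varphi^{\alpha\bar\beta}=-\sum\varphi^{\alpha\bar\gamma}\,\dot\varphi_{\gamma\bar\delta}\,\varphi^{\delta\bar\beta}$; this produces $\tfrac{d}{dt}\{\psi,\theta\}_t=\{\dot\psi,\theta\}_t+\{\psi,\dot\theta\}_t+Q_t$, where $Q_t$ collects the metric-variation terms. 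Substituting into $D\{\psi,\theta\}=\tfrac{d}{dt}\{\psi,\theta\}-\langle\nabla\{\psi,\theta\},\nabla\dot\varphi\rangle_\varphi$ and expanding, by (iii), $\{D\psi,\theta\}+\{\psi,D\theta\}=\{\dot\psi,\theta\}+\{\psi,\dot\theta\}-\{\langle\nabla\psi,\nabla\dot\varphi\rangle_\varphi,\theta\}-\{\psi,\langle\nabla\theta,\nabla\dot\varphi\rangle_\varphi\}$, the identity (vi) reduces to the pointwise equality $Q_t-\langle\nabla\{\psi,\theta\},\nabla\dot\varphi\rangle_\varphi=-\{\langle\nabla\psi,\nabla\dot\varphi\rangle_\varphi,\theta\}-\{\psi,\langle\nabla\theta,\nabla\dot\varphi\rangle_\varphi\}$, i.e. to the assertion that the first-order operator $f\mapsto\langle\nabla f,\nabla\dot\varphi\rangle_\varphi$, together with the derivative of the inverse metric, acts as a derivation of $\{\cdot,\cdot\}$. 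This last identity I would verify by a direct computation with the Christoffel symbols of $\omega_\varphi$, and I expect the bookkeeping there — keeping careful track of the inverse-metric derivative and of the $d^c$ normalization — to be the main obstacle, the remaining items being essentially formal.
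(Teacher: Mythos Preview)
The paper states Lemma~\ref{poi} without proof, so there is nothing to compare against at the level of argument. Your proposal is correct and supplies exactly the kind of standard symplectic-geometry verification one would expect here: (i) and (iii) are immediate from bilinearity and the Hermitian symmetry of $(\varphi^{\alpha\bar\beta})$; (ii) is the coordinate identification of $X_\psi=i\nabla\psi$ as the Hamiltonian vector field of $\psi$ for $\omega_\varphi$; (iv) follows from Cartan's formula and $d\omega_\varphi=0$ in the usual way; and (v) is the divergence-free property of Hamiltonian vector fields together with the boundary condition $\psi=\eta=0$ on $\partial\Omega$, which makes the Stokes boundary term vanish. All of this is sound.

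The only place your write-up is genuinely incomplete is (vi): you correctly reduce $D\{\psi,\theta\}=\{D\psi,\theta\}+\{\psi,D\theta\}$ to the pointwise identity
\[
Q_t-\langle\nabla\{\psi,\theta\},\nabla\dot\varphi\rangle_\varphi
=-\bigl\{\langle\nabla\psi,\nabla\dot\varphi\rangle_\varphi,\theta\bigr\}
-\bigl\{\psi,\langle\nabla\theta,\nabla\dot\varphi\rangle_\varphi\bigr\},
\]
with $Q_t$ coming from $\tfrac{d}{dt}\varphi^{\alpha\bar\beta}=-\varphi^{\alpha\bar\gamma}\dot\varphi_{\gamma\bar\delta}\varphi^{\delta\bar\beta}$, but you stop short of carrying out the check. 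This step does go through: expanding both sides in coordinates, the second-order derivatives of $\psi$ and $\theta$ cancel pairwise, and the remaining first-order terms match after using the K\"ahler condition (so that the only nonzero Christoffel symbols are $\Gamma^{\gamma}_{\alpha\beta}=\varphi^{\gamma\bar\delta}\varphi_{\alpha\beta\bar\delta}$ and their conjugates). Since the paper simply asserts the lemma, your outline already goes further than what is written there; just be aware that (vi) is the one item where an actual line-by-line computation is required rather than a one-line appeal to general principles.
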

Let $\psi$ be a function in tangent space, the Hessian of $\psi$ is defined by 
$\Hess \psi=\nabla^{\varphi} d\psi$, where $\nabla^{\varphi}$ is the Levi-Civita connection respectively to the form $\omega_{\varphi}=dd^c\varphi$. We Recall  in the next lemma some proprieties of the Hessian well know in the literature.
\begin{lem}\label{hess}Let $X$ and $Y$ be two vector fields. Then the Hessian satisfies the following proprieties:  \\
i)\;\; $\Hess\psi(X,Y)=<\nabla^{\varphi}_X\nabla^{\varphi}\psi,Y>_{\varphi}$.\\
ii)\;\;$\Hess\psi(X,Y)=X(Y(\psi))-\nabla^{\varphi}_XY(\psi)$.\\
iii)\;\;$ dd^c\psi(X,iY)=\Hess\psi(X,Y)+\Hess\psi(iX,iY).$\\
Where $\nabla^{\varphi}$ and $<,>_{\varphi}$ are the Levi-Civita connection and the metric respectively associated to the form $\omega_{\varphi}=dd^c\varphi$.
\end{lem}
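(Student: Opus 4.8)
The plan is to derive the three identities in sequence, obtaining (ii) straight from the definition, (i) from (ii) via metric compatibility, and (iii) from (ii) together with the fact that $\omega_{\varphi}$ is K\"ahler. All three are classical Riemannian/K\"ahler facts, so the proof is really a matter of assembling them in the conventions of this paper. For (ii): since $\Hess\psi=\nabla^{\varphi}d\psi$ by definition, the Leibniz rule for the covariant derivative of a $1$-form gives $\Hess\psi(X,Y)=(\nabla^{\varphi}_{X}d\psi)(Y)=X\bigl(d\psi(Y)\bigr)-d\psi(\nabla^{\varphi}_{X}Y)=X(Y(\psi))-(\nabla^{\varphi}_{X}Y)(\psi)$. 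For (i): writing $\nabla^{\varphi}\psi$ for the metric dual of $d\psi$, so that $d\psi(Z)=\langle\nabla^{\varphi}\psi,Z\rangle_{\varphi}$, metric compatibility of $\nabla^{\varphi}$ yields $X\langle\nabla^{\varphi}\psi,Y\rangle_{\varphi}=\langle\nabla^{\varphi}_{X}\nabla^{\varphi}\psi,Y\rangle_{\varphi}+\langle\nabla^{\varphi}\psi,\nabla^{\varphi}_{X}Y\rangle_{\varphi}$; substituting this and $(\nabla^{\varphi}_{X}Y)(\psi)=\langle\nabla^{\varphi}\psi,\nabla^{\varphi}_{X}Y\rangle_{\varphi}$ into (ii) leaves $\Hess\psi(X,Y)=\langle\nabla^{\varphi}_{X}\nabla^{\varphi}\psi,Y\rangle_{\varphi}$.

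For (iii) I would use that $\omega_{\varphi}=dd^{c}\varphi$ is a closed positive $(1,1)$-form, so the metric $\langle\cdot,\cdot\rangle_{\varphi}$ it induces is K\"ahler; hence its Levi-Civita connection $\nabla^{\varphi}$ is torsion-free and satisfies $\nabla^{\varphi}J=0$, where $J$ is the standard complex structure (multiplication by $i$). For a torsion-free connection one has $d\alpha(X,Y)=(\nabla^{\varphi}_{X}\alpha)(Y)-(\nabla^{\varphi}_{Y}\alpha)(X)$ for every $1$-form $\alpha$. Applying this to $\alpha=d^{c}\psi$, which up to the normalizing constant of $d^{c}=\tfrac{i}{2\pi}(\partial-\bar\partial)$ equals $d\psi\circ J$, and using $\nabla^{\varphi}J=0$ to move the connection past $J$, one gets $(\nabla^{\varphi}_{X}d^{c}\psi)(Y)=\Hess\psi(X,JY)$ up to that same constant, hence $dd^{c}\psi(X,Y)=\Hess\psi(X,JY)-\Hess\psi(Y,JX)$ up to the constant. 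Replacing $Y$ by $JY$, using $J^{2}=-\mathrm{Id}$ and the symmetry $\Hess\psi(Z,W)=\Hess\psi(W,Z)$ (valid because $\nabla^{\varphi}$ is torsion-free and $d\psi$ is exact), one arrives at $dd^{c}\psi(X,iY)=\Hess\psi(X,Y)+\Hess\psi(iX,iY)$.

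The only genuinely non-formal ingredient is the K\"ahler identity $\nabla^{\varphi}J=0$, which is what makes covariant differentiation commute with the complex structure; everything else is bookkeeping with a torsion-free, metric-compatible connection. I expect the one point requiring care to be the tracking of signs and of the scalar factor coming from the convention $d^{c}=\tfrac{i}{2\pi}(\partial-\bar\partial)$, so that (iii) comes out with coefficient exactly $1$ and the correct sign on the right-hand side; if one prefers, (iii) can instead be checked by a direct computation in holomorphic coordinates, comparing the second-order expressions $\partial^{2}\psi/\partial z_{\alpha}\partial\bar z_{\beta}$ against the Christoffel symbols of $\omega_{\varphi}$, which bypasses the connection identities entirely at the cost of an index calculation.
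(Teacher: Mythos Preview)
The paper does not actually prove this lemma: it is introduced with the phrase ``We recall in the next lemma some properties of the Hessian well known in the literature'' and is stated without proof. Your argument supplies exactly the standard justifications one would expect --- (ii) from the Leibniz rule for $\nabla^{\varphi}$ acting on the $1$-form $d\psi$, (i) from (ii) together with metric compatibility, and (iii) from the K\"ahler identity $\nabla^{\varphi}J=0$ combined with the torsion-free formula $d\alpha(X,Y)=(\nabla^{\varphi}_{X}\alpha)(Y)-(\nabla^{\varphi}_{Y}\alpha)(X)$ --- and these are correct.

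Your caveat about the constant and sign in (iii) is well placed: the paper's stated convention $d^{c}=\tfrac{i}{2\pi}(\partial-\bar\partial)$ is not used consistently elsewhere in the text (e.g.\ in the proof of Theorem~\ref{geo} one finds $d^{c}=\tfrac{i}{2}(\bar\partial-\partial)$), so the identity (iii) as written should be read up to the normalization implicit in whichever $d^{c}$ is in force. Your suggested fallback of checking (iii) directly in holomorphic coordinates is the cleanest way to pin down the exact constant in any fixed convention.
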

In the sequel of this section, we consider a 2-parameters family $\varphi(t,s)\in \mathcal{H}$ and a vector field 
$\psi(t,s)\in T_{\varphi}\mathcal{H}$ defined along $\varphi$. We denote by
$$\varphi_t=\frac{d\varphi}{dt},\quad \varphi_s=\frac{d\varphi}{ds}
.$$
\begin{defi} 
The curvature tensor of the Mabuchi metric  in $\mathcal{H}$ is defined by
\begin{equation*}
    R_{\varphi}(\varphi_t,\varphi_s)\psi:=D_{t}D_{s}\psi-
D_{s}D_{t}\psi
\end{equation*}
where $\varphi(s,t)\in \mathcal{H}$ is 2-parameters family and vector field $\psi(s,t)\in T_{\varphi}\mathcal {H}$.\\
The  sectional curvature is given by
  \begin{equation*}
K_{\varphi}(\varphi_t,\varphi_s):=<<R_{\varphi}(\varphi_t,\varphi_s)\varphi_t,\varphi_s>>_{\varphi}
\end{equation*}
\end{defi}
\begin{thm} The curvature tensor of the Mabuchi metric  in $\mathcal{H}$  can be expressed as
 $$
 R_{\varphi}(\varphi_t,\varphi_s)\psi=-\{\{\varphi_t,\varphi_s\},\psi\}
 .$$
 The sectional curvature is the following
  $$
  K_{\varphi}(\varphi_t,\varphi_s)=-||\{\varphi_t,\varphi_s\}||^2_{\varphi}\leq 0
  ,$$
where  $\{,\}_{\varphi}$ is the Poisson bracket associate  to the form $\omega_{\varphi}=dd^c\varphi$.
 \end{thm}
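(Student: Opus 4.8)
The plan is to mimic, in the local setting, Mabuchi's original computation of the curvature of $\mathcal H_{\alpha_Y}$, exploiting the fact (proved in Theorem A, or rather in the preceding lemmas) that $D$ is the Levi--Civita connection of the $L^2$-metric and that the Poisson bracket behaves well with respect to it. Concretely, I would first establish the identity $D_s\varphi_t - D_t\varphi_s = 0$ (the connection is torsion-free, already observed), and then compute $D_s\psi$ and $D_t\psi$ directly from the defining formula $D\psi = \dot\psi - \langle\nabla\psi,\nabla\dot\varphi\rangle_\varphi$. The key is to keep track of how $\nabla = \nabla^\varphi$ itself depends on $s$ and $t$ through $\varphi$, so that when one commutes $D_t$ and $D_s$ the $\ddot\psi$-type terms and the "obvious'' cross terms cancel, leaving only a term built from the variation of the metric, i.e.\ from $dd^c\varphi_t$ and $dd^c\varphi_s$.

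The cleanest way to organize the computation is to pass to the Hamiltonian vector fields $X_\psi = i\nabla^\varphi\psi$ and use Lemma~\ref{poi} and Lemma~\ref{hess}. Property (vi) of Lemma~\ref{poi}, $D\{\psi,\theta\} = \{D\psi,\theta\} + \{\psi,D\theta\}$, says the bracket is parallel, and property (iv), $[X_\psi,X_\theta] = X_{\{\psi,\theta\}}$, translates Lie brackets of vector fields into Poisson brackets of functions. Then the standard Riemannian identity for curvature in terms of a coordinate-free frame, combined with the fact that $\langle\cdot,\cdot\rangle_\varphi$ is the $L^2$ pairing against $(dd^c\varphi)^n$, reduces $R_\varphi(\varphi_t,\varphi_s)\psi$ to an expression in which the derivatives of $\varphi$ enter only through the time/space derivatives of the Kähler form. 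Carrying this through — exactly as in Mabuchi \cite{Mab87} and as exposed in \cite{G14,Kol12} — yields $R_\varphi(\varphi_t,\varphi_s)\psi = -\{\{\varphi_t,\varphi_s\},\psi\}$.

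For the sectional curvature one then simply specializes $\psi = \varphi_t$ and pairs with $\varphi_s$:
$$
K_\varphi(\varphi_t,\varphi_s) = \langle\langle R_\varphi(\varphi_t,\varphi_s)\varphi_t,\varphi_s\rangle\rangle_\varphi = -\int_\Omega \{\{\varphi_t,\varphi_s\},\varphi_t\}\,\varphi_s\,(dd^c\varphi)^n.
$$
Now invoke property (v) of Lemma~\ref{poi}, the self-adjointness $\int_\Omega\{\psi,\theta\}\eta\,(dd^c\varphi)^n = \int_\Omega\psi\{\theta,\eta\}\,(dd^c\varphi)^n$, with $\psi = \{\varphi_t,\varphi_s\}$, $\theta = \varphi_t$, $\eta = \varphi_s$, together with antisymmetry (i), to rewrite the integrand as $\{\varphi_t,\varphi_s\}\{\varphi_t,\varphi_s\} = \{\varphi_t,\varphi_s\}^2$ up to sign, giving $K_\varphi(\varphi_t,\varphi_s) = -\|\{\varphi_t,\varphi_s\}\|_\varphi^2 \leq 0$. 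The only place where the local (rather than compact) setting needs care is the repeated integration by parts: every Stokes step must be justified by the boundary condition $\varphi_t = \varphi_s = \psi = 0$ on $\partial\Omega$, which kills the boundary terms exactly as the vanishing of $\psi_0,\psi_1$ did in the proof of the geodesic equation above. This boundary bookkeeping, and correctly tracking the $s$- and $t$-dependence of $\nabla^\varphi$ and of $(\varphi^{\alpha\bar\beta})$ through all the intermediate terms, is the main obstacle; the algebra itself is routine once the right identities from Lemmas~\ref{poi} and \ref{hess} are in place.
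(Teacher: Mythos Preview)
Your proposal follows essentially the same route as the paper's proof: a direct computation of $D_tD_s\psi - D_sD_t\psi$ from the defining formula of $D$, tracking the $s,t$-dependence of the metric $(\varphi^{\alpha\bar\beta})$, rewriting the surviving terms via Lemma~\ref{hess} and the Hamiltonian fields $X_\psi=i\nabla\psi$, and then using properties (ii) and (iv) of Lemma~\ref{poi} to identify the result with $-\{\{\varphi_t,\varphi_s\},\psi\}$; the sectional-curvature step via property~(v) and antisymmetry~(i) is identical to the paper's.

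Two minor remarks. First, property~(vi) of Lemma~\ref{poi} (parallelism of the bracket) is not actually used here---it is the key ingredient for the \emph{next} theorem ($DR=0$), not for the curvature formula itself. Second, the curvature tensor identity $R_\varphi(\varphi_t,\varphi_s)\psi=-\{\{\varphi_t,\varphi_s\},\psi\}$ is \emph{pointwise}: no integration, hence no Stokes, is involved at that stage. The only integration by parts occurs in the sectional-curvature step, hidden inside property~(v), and there indeed the boundary condition $\psi|_{\partial\Omega}=0$ is what kills the boundary term; so your caveat about boundary bookkeeping is well placed but applies to a single step rather than ``repeatedly.''
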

\begin{proof} 
To compute the curvature  tensor of $D$, we compute the first term in the definition of the   curvature tensor . Indeed, let $\psi$ be the vector field, its derivative along the path $\varphi_s$
$$
D_s \psi=\psi_s-<\nabla \psi,\nabla\varphi_s>_{\varphi}=\psi_s+\Gamma_{\varphi}(\psi,\varphi_s)
,$$
where  $$\Gamma_{\varphi}(\psi,\varphi_s)=-<\nabla \psi, \nabla \varphi_s>_{\varphi},$$
we derive the $D_s \psi$ along the path $\varphi_t$ as follows
\begin{eqnarray*}
  D_{t}D_{s}\psi &=& D_{t}(\psi_s+\Gamma_{\varphi}(\psi,\varphi_s)\\
   &=&\frac{d}{dt}(\psi_s+\Gamma_{\varphi}(\psi,\varphi_s))+\Gamma_{\varphi}(\psi_s+\Gamma_{\varphi}(\psi,\varphi_s),\varphi_t))\\
   &=& \psi_{st}+\frac{d}{dt}(\Gamma_{\varphi}(\psi,\varphi_s))+\Gamma_{\varphi}(\psi_s,\varphi_t)+\Gamma_{\varphi}(\Gamma_{\varphi}(\psi,\varphi_s),\varphi_t).
\end{eqnarray*}
We  express the second term in RHS  of the last equation:
\begin{eqnarray*}
 \frac{d}{dt}\Gamma_{\varphi}(\psi,\varphi_s) &=& \frac{d}{dt}(-<\nabla\psi,\nabla\varphi_s>_{\varphi}) \\
   &=& -\frac{d}{dt}\varphi^{\alpha\bar{\beta}}\varphi_{s\alpha}\psi_{\bar{\beta}}\\
   &=& -\varphi^{\alpha\bar{\beta}}\varphi_{s\alpha t}\psi_{\bar{\beta}}-\varphi^{\alpha \bar{\beta}}\varphi_{s\alpha}\psi_{\bar{\beta}t}+\varphi^{\alpha\bar{m}}
   \varphi^{n\bar{\beta}}\varphi_{n\bar{m}t}\varphi_{s\alpha}\psi_{\bar{\beta}}\\
   &=&\Gamma_{\varphi}(\psi,\varphi_{ts})+\Gamma_{\varphi}(\psi_t,\varphi_s)+dd^c\varphi_t(\nabla\;\varphi_s,i\nabla\;\psi).
\end{eqnarray*}
By applying of the three properties of lemma \ref{hess} by taken $X=\nabla\varphi_s$ and $Y= \nabla\psi$, then we express the last term in the last equation as follows: $$  
 dd^c\varphi_t(\nabla\, \varphi_s,i\nabla\;\psi)=\hbox{Hess}(\varphi_t)(\nabla\;\varphi_s, \nabla\psi)+\hbox{Hess}(\varphi_t)(i\nabla\;\varphi_s,i\nabla\;\psi).$$
Which gives 
{\small
\begin{equation*}
{d \over dt}\Gamma _{\varphi}(\varphi_t,\psi)=\Gamma_{\varphi}(\psi,\varphi_{ts})+\Gamma_{\varphi}(\psi_s,\varphi_t)+
\hbox{Hess}(\varphi_t)(\nabla\;\varphi_s, \nabla\psi)+\hbox{Hess}(\varphi_t)(i\nabla\;\varphi_s,i\nabla\;\psi).
\end{equation*}
}
We develop the fourth term in the RHS in the last equation  by applying the second proprieties of the lemma \ref{hess},  taken  $X=\nabla \varphi_s$ and $Y=\nabla\psi$:
\begin{eqnarray*}
  \hbox{Hess}(\varphi_t)(\nabla\;\varphi_s,\nabla\;\psi) &=& \nabla\;\varphi_s(\nabla \;\psi(\varphi_t))-(\nabla^{\varphi}_{\nabla\;\varphi_s}\nabla\;\psi)(\varphi_t)\\
   &=& \nabla\;\varphi_s(<\nabla\;\varphi_t,\nabla\;\psi>_{\varphi})- <\nabla\;\varphi_t,\nabla^{\varphi}_{\nabla\;\varphi_s}\nabla\;\psi>_{\varphi}\\
   &=& \Gamma_{\varphi}(\Gamma_{\varphi}(\varphi_t,\psi),\varphi_s)-\hbox{Hess}(\psi)(\nabla\;\varphi_s,\nabla\;\varphi_t)
\end{eqnarray*}
We have  also by applying the first proprieties of lemma \ref{hess}:
\begin{eqnarray*}
  \hbox{Hess}(\varphi_t)(i\nabla\;\varphi_s,i\nabla\;\psi)&=&<\nabla^{\varphi}_{i\nabla\;\varphi_s}\nabla\;\varphi_t,i\nabla\;\psi>_{\varphi}\\
  &=& <\nabla^{\varphi}_{i\nabla\;\varphi_s}(i\nabla\;\varphi_t),i(i\nabla\;\psi)>_{\varphi}\\
  &=&\omega_{\varphi}(\nabla^{\varphi}_{X_{\varphi_s}}X_{\varphi_t},X_{\psi})
\end{eqnarray*}
Where $X_{h}=i\nabla\; h$. Then we have:

\begin{eqnarray*}
\frac{d}{dt}\Gamma_{\varphi}(\varphi_s,\psi)&=&\Gamma_{\varphi}(\psi,\varphi_{ts})+\Gamma_{\varphi}(\psi_t,\varphi_s)+\Gamma_{\varphi}
(\Gamma_{\varphi}(\varphi_s,\psi),\varphi_t)\\
&&
-\Hess(\psi)(\nabla\;\varphi_t,\nabla\;\varphi_s)+
\omega_{\varphi}(\nabla^{\varphi}_{X_{\varphi_t}}X_{\varphi_s},X_{\psi})
\end{eqnarray*}

 After all previous equations, we get the expression of  $D_{t}D_{s}\psi$ as follows:
 {\footnotesize
\begin{eqnarray*}
D_{t}D_{s}\psi &=&\psi_{st}+\Gamma_{\varphi}(\psi,\varphi_{ts})+\Gamma_{\varphi}(\psi_t,\varphi_s)+\Gamma_{\varphi}
(\Gamma_{\varphi}(\varphi_s,\psi),\varphi_t)
-\hbox{Hess}(\psi)(\nabla\;\varphi_t,\nabla\;\varphi_s)\\
&&+\;\omega_{\varphi}(\nabla^{\varphi}_{X_{\varphi_t}}X_{\varphi_s},X_{\psi})+\Gamma_{\varphi}(\psi_s,\varphi_t)+\Gamma_{\varphi}(\Gamma_{\varphi}(\psi,\varphi_s),\varphi_t)
\end{eqnarray*}
}
We get the expression of  $D_{s}D_{t}\psi$ by reversing the roles of   $t$ and $s$ as follows:
{\footnotesize
\begin{eqnarray*}
D_{s}D_{t}\psi &=&\psi_{st}+\Gamma_{\varphi}(\psi,\varphi_{st})+\Gamma_{\varphi}(\psi_s,\varphi_t)+\Gamma_{\varphi}
(\Gamma_{\varphi}(\varphi_t,\psi),\varphi_s)
-\hbox{Hess}(\psi)(\nabla\;\varphi_s,\nabla\;\varphi_t)\\
&&+\;\omega_{\varphi}(\nabla^{\varphi}_{X_{\varphi_s}}X_{\varphi_t},X_{\psi})+\Gamma_{\varphi}(\psi_t,\varphi_s)+\Gamma_{\varphi}(\Gamma_{\varphi}(\psi,\varphi_t),\varphi_s)
\end{eqnarray*}
}
Therefore we get
\begin{eqnarray*}
R_{\varphi}(\varphi_t,\varphi_s)\psi&=&D_{t}D_{s}\psi-
D_{s}D_{t}\psi\\
&=&\omega_{\varphi}(\nabla^{\varphi}_{X_{\varphi_t}}X_{\varphi_s},X_{\psi})-
\omega_{\varphi}(\nabla^{\varphi}_{X_{\varphi_s}}X_{\varphi_t},X_{\psi}) \\
   &=& \omega_{\varphi}([X_{\varphi_t},X_{\varphi_s}],X_{\psi}) \\
   &=& \omega_{\varphi}(\{\varphi_t,\varphi_s\},X_{\psi})\\
   &=& -\{\{\varphi_t,\varphi_s\},\psi\}
\end{eqnarray*}
In the line three we use the fact that the Levi-Civita connection is torsion free. In the line four we use the fourth property in lemma \ref{poi}, in the last line we use the second property in lemma \ref{poi}.
We calculate the sectional curvature as follow:
\begin{eqnarray*}
  K_{\varphi}(\varphi_t,\varphi_s) &=& <<R_{\varphi}(\varphi_t,\varphi_s)\varphi_t,\varphi_s>>_{\varphi}\\
   &=& \int_{\Omega}R_{\varphi}(\varphi_t,\varphi_s)\varphi_t\varphi_s (dd^c\varphi)^{n} \\
   &=& -\int_{\Omega}\{\{\varphi_t,\varphi_s\},\varphi_t \}\varphi_s (dd^c\varphi)^n\\
  &=&-\int_\omega \{\varphi_t,\varphi_s\}\{\varphi_t,\varphi_s\}(dd^c\varphi)^n\\
  &=&-||\{\varphi_t,\varphi_s\}||^2_{\varphi}
\end{eqnarray*}
We use in line three the expression of the curvature tensor, in the line four we use fifth property in lemma \ref{poi}.
\end{proof}
\begin{defi}
We say a connection  $D$ in  $\mathcal{H}$  is locally symmetric if its  curvature tensor is parallel i.e   $D R=0$.
\end{defi}

\begin{thm} 
The  Mabuchi space  $\mathcal{H}$   provided by  the Levi-Civita connection  $D$ is a locally symmetric space.
\end{thm}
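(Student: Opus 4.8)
The plan is to prove local symmetry by a direct verification that the curvature tensor is parallel, i.e. $DR=0$, using the closed formula $R_{\varphi}(\varphi_t,\varphi_s)\psi=-\{\{\varphi_t,\varphi_s\},\psi\}$ obtained in the previous theorem together with the compatibility of the Poisson bracket with $D$ recorded in Lemma \ref{poi} (vi).

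First I would make sense of $DR$ by adjoining an extra parameter: take a three-parameter family $\varphi=\varphi(t,s,r)\in\mathcal{H}$ and a vector field $\psi$ along it, and write out the covariant derivative of the $(1,3)$-tensor $R$ in the direction $\varphi_r$ via the Leibniz rule,
$$(D_{r}R)(\varphi_t,\varphi_s)\psi=D_{r}\bigl(R_{\varphi}(\varphi_t,\varphi_s)\psi\bigr)-R_{\varphi}(D_{r}\varphi_t,\varphi_s)\psi-R_{\varphi}(\varphi_t,D_{r}\varphi_s)\psi-R_{\varphi}(\varphi_t,\varphi_s)(D_{r}\psi).$$
Next I would substitute $R_{\varphi}=-\{\{\cdot,\cdot\},\cdot\}$ in every term. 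The only term that needs work is $D_{r}\{\{\varphi_t,\varphi_s\},\psi\}$: applying Lemma \ref{poi} (vi) first to the outer bracket and then to the inner one expands it as $\{\{D_{r}\varphi_t,\varphi_s\},\psi\}+\{\{\varphi_t,D_{r}\varphi_s\},\psi\}+\{\{\varphi_t,\varphi_s\},D_{r}\psi\}$. These are exactly, up to the common sign, the remaining three terms of the Leibniz expansion, so everything cancels in pairs and $(D_{r}R)(\varphi_t,\varphi_s)\psi=0$. Since $\varphi_r,\varphi_t,\varphi_s,\psi$ are arbitrary, $DR=0$, which is the definition of a locally symmetric space.

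The only genuinely nontrivial ingredient is Lemma \ref{poi} (vi), the identity $D\{\psi,\theta\}=\{D\psi,\theta\}+\{\psi,D\theta\}$; this is where the particular form of the Mabuchi connection enters (mirroring Mabuchi's computation in the compact K\"ahler setting), and once it is granted the statement is a purely formal consequence. The one point requiring a little care is that the Leibniz-rule expression above really is the coordinate-free notion of $D_{r}R$ — that it is independent of how the vector fields are extended off the point in question — but this is standard once one knows that $D$ is a torsion-free metric connection, both of which were established above.
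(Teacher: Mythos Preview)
Your proposal is correct and follows essentially the same approach as the paper: both introduce a three-parameter family, use the formula $R_{\varphi}(\varphi_t,\varphi_s)\psi=-\{\{\varphi_t,\varphi_s\},\psi\}$, apply Lemma~\ref{poi}~(vi) twice to expand $D_r$ of the nested Poisson bracket, and then observe that the resulting three terms exactly cancel against the Leibniz-rule expansion of $(D_rR)(\varphi_t,\varphi_s)\psi$. Your additional remark about the extension-independence of $D_rR$ is a reasonable aside but not needed for the argument.
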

\begin{proof}
Let $\varphi(t,s,r)$ be 3-parameters family in $\H$. 
\begin{eqnarray*}
   D_r(R_{\varphi}(\varphi_t,\varphi_s)\psi)&=&D_r(-\{\{\varphi_t,\varphi_s\},\psi\}  \\
  &=&-\{D_r\{\varphi_t,\varphi_s\},\psi\} -\{\{\varphi_t,\varphi_s\},D_r\psi\}\\
   &=& -\{\{D_r\varphi_t,\varphi_s\}+\{\varphi_t,D_r\varphi_s\},\psi\}-\{\{\varphi_t,\varphi_s\},D_r\psi\}\\
&=&-\{\{\{D_r\varphi_t,\varphi_s\}\}-\{\varphi_t,D_r\varphi_s\},\psi\}-\{\{\varphi_t,\varphi_s\},D_r\psi\} \\
&=& R_{\varphi}(D_r\varphi_t,\varphi_s)\psi+
R_{\varphi}(\varphi_t,D_r\varphi_s)\psi+R_{\varphi}(\varphi_t,D_r\varphi_s)(D_r\psi)
\end{eqnarray*}
We use the expression of the curvature tensor and  the sixth property in the  lemma \ref{poi} of the Poisson bracket.
Therefore
{\footnotesize
$$
(D_rR_{\varphi})(\varphi_t,\varphi_s)\psi = D_r(R_{\varphi}(\varphi_t,\varphi_s)\psi)-R_{\varphi}(D_r\varphi_t,\varphi_s)\psi-R_{\varphi}(\varphi_t,D_r\varphi_s)\psi-R_{\varphi}(\varphi_t,\varphi_s)(D_r\psi)=0,
$$
}
hence  $\mathcal{H}$ is locally symmetric.
\end{proof}
 \section{The Dirichlet problem}
We now study the regularity of geodesics using pluripotential  theory, the tools using are developed by Bedford and Taylor \cite{BT76,BT82}.
   \subsection{Semmes trick}
   We are interested in the boundary value problem for the geodesic equation: given $\varphi_0$,$\varphi_1$ two  distinct points in $\mathcal{H}$, can one find a path $(\varphi(t))_{0\leq t\leq 1}$ in $\mathcal{H}$ which is a solution of\eqref{1} with end points $\varphi(0)=\varphi_0$ and  $\varphi(1)=\varphi_1$?
For each path $(\varphi_t)_{t\in [0,1]}$ in $\mathcal{H}$, we set
$$\Phi(z,\zeta)=\varphi_t(z)\;, z\in \Omega\;\;and\;\; \zeta=e^{t+is}\in A=\{\zeta\in \mathbb{C} / 1<|\zeta|<e\} $$
We will show in this section that the geodesic equation in $\mathcal{H}$ is equivalent to Monge-Amp\`ere equation on $\Omega\times A$ as in Semmes \cite{Sem92}.
\begin{lem} The  Monge-Amp\`ere  measure of the function $\Phi$ in $\Omega \times A$ is :
\begin{eqnarray*}
(dd^c_{z,\zeta}\Phi(z,\zeta))^{n+1}&=&(dd^c_z\Phi(z,\zeta))^{n+1}+(n+1)(dd^c_z\Phi(z,\zeta))^n\wedge R\\
&&+ {n(n+1)\over 2}(dd^c_z\Phi(z,\zeta))^{n-1}\wedge R^2
\end{eqnarray*}
  
with $$R=R(z,\zeta)= d_zd^c_{\zeta}\Phi+d_{\zeta}d^c_z\Phi+d_{\zeta}d^c_{\zeta}\Phi,$$
\end{lem}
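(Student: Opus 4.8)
The plan is to compute the full wedge power $(dd^c_{z,\zeta}\Phi)^{n+1}$ by splitting the operator $dd^c_{z,\zeta}$ into its ``pure $z$'' part and a ``mixed/pure $\zeta$'' remainder. Writing $z=(z_1,\dots,z_n)$ for the $\Omega$-coordinates and $\zeta$ for the annulus coordinate, I would decompose
$$
dd^c_{z,\zeta}\Phi = dd^c_z\Phi + R,\qquad R = d_zd^c_\zeta\Phi + d_\zeta d^c_z\Phi + d_\zeta d^c_\zeta\Phi,
$$
where $dd^c_z\Phi$ is the $(1,1)$-form obtained by differentiating only in the $z_\alpha,\bar z_\alpha$ directions and $R$ collects all terms involving at least one $d\zeta$ or $d\bar\zeta$. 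This decomposition is legitimate because $d$ and $d^c$ are first-order and the exterior derivative splits as a sum of partials; the identity $dd^c = \frac{i}{\pi}\partial\bar\partial$ then distributes over the coordinate blocks.

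Next I would expand $(dd^c_z\Phi + R)^{n+1}$ by the binomial theorem for commuting $2$-forms (both summands are real $(1,1)$-forms, hence even-degree, hence commute under the wedge product):
$$
(dd^c_z\Phi + R)^{n+1} = \sum_{k=0}^{n+1}\binom{n+1}{k}(dd^c_z\Phi)^{n+1-k}\wedge R^k.
$$
The key observation that kills most terms is a degree/rank count: $dd^c_z\Phi$ involves only the $2n$ real differentials $dx_j,dy_j$ coming from the $z$-variables, so $(dd^c_z\Phi)^m = 0$ for $m > n$; and $R$ involves at least one of the two real differentials $d(\mathrm{Re}\,\zeta), d(\mathrm{Im}\,\zeta)$ in each monomial, so $R^3 = 0$ (there are only two such differentials available, and $R^2$ already uses both, while $R^3$ would force a repeat). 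Hence the only surviving values of $k$ are $k=0,1,2$, giving exactly the three displayed terms with coefficients $\binom{n+1}{0}=1$, $\binom{n+1}{1}=n+1$, and $\binom{n+1}{2}=\frac{n(n+1)}{2}$.

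The main obstacle — really the only place requiring care rather than bookkeeping — is justifying the vanishing $R^3=0$ and, relatedly, that $(dd^c_z\Phi)^{n+1}$ and the cross terms are handled with the correct combinatorial weights once one is careful about signs in $d^c=\frac{i}{2\pi}(\partial-\bar\partial)$ and about the fact that $\Phi$ is only assumed to be (pluri)subharmonic, so the wedge powers must be interpreted as Bedford–Taylor currents. I would address the current-theoretic point by noting that $\Phi$ is locally bounded plurisubharmonic (it is, being built from smooth geodesic endpoints via the Perron–Bremermann envelope), so all mixed Monge–Ampère products appearing are well defined by \cite{BT76,BT82}, and the algebraic expansion above is valid at the level of currents because it holds for smooth approximants and both sides are continuous under decreasing limits. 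The $R^3=0$ claim I would make precise by writing $R = d\xi\wedge\alpha + \beta\wedge d\bar\xi + \gamma\, d\xi\wedge d\bar\xi$ schematically with $\xi=\mathrm{Re}\,\zeta+i\,\mathrm{Im}\,\zeta$ and $\alpha,\beta$ forms in the $z$-differentials only, and observing that any cube of such an expression contains $d\xi\wedge d\xi = 0$ or $d\bar\xi\wedge d\bar\xi = 0$ or three copies of the two-dimensional span $\langle d\,\mathrm{Re}\,\zeta, d\,\mathrm{Im}\,\zeta\rangle$, all of which vanish.
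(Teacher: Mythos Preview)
Your proof is correct and follows essentially the same approach as the paper: decompose $dd^c_{z,\zeta}\Phi = dd^c_z\Phi + R$, expand $(dd^c_z\Phi + R)^{n+1}$ by the binomial formula for commuting $2$-forms, and invoke $R^3=0$ to truncate the sum to the three displayed terms. The Bedford--Taylor current-theoretic justification you add is unnecessary here, since at this point in the paper $\Phi$ arises from a \emph{smooth} path $(\varphi_t)$ in $\mathcal{H}$, but it does no harm.
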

\begin{proof}
We write $d_{z,\zeta}\Phi=d_z\Phi+d_{\zeta}\Phi$ and $d^c_{z,\zeta}\Phi=d^c_z \Phi+d^c_{\zeta}\Phi$, and we give also the expression of   $dd^c_{z,\zeta}\Phi(z,\zeta)$ in $\Omega\times A$. Indeed 
\begin{eqnarray*}
 dd^c_{x,z}\Phi&=&(d_z+d_{\zeta})(d^c_z\Phi+d^c_{\zeta}\Phi)\\
 &=& d_zd_z^c\Phi+d_zd^c_{\zeta}\Phi+d_{\zeta}d^c_z\Phi+d_{\zeta}d^c_{\zeta}\Phi\\
 &=& d_zd_z^c\Phi+R(z,\zeta)
\end{eqnarray*}
with $R= d_zd^c_{\zeta}\Phi+d_{\zeta}d^c_z\Phi+d_{\zeta}d^c_{\zeta}\Phi$ such that $R^3=0$. Then we can find  the expression of$(dd ^c_{x,z}\Phi)^{n+1}$ in $\Omega\times A$. Indeed 
\begin{eqnarray*}
  (dd^c_{z,\zeta}\Phi)^{n+1}&=&(dd^c_z\Phi+R)^{n+1} \\
  &=& \sum_{j=0}^{n+1}C_{n+1}^j(dd^c_z\Phi)^j\wedge(R)^{n+1-j} \\
  &=& (dd^c_z\Phi)^{n+1}+(n+1)(dd^c_z\Phi)^n\wedge R\\
  &&+{n(n+1)\over 2}(dd^c_z\Phi)^{n-1}\wedge R^2
  \end{eqnarray*}
On the second line we use Leibniz formula and the fact that  $R^3=R\wedge R\wedge R=0$ on the third line.
  \end{proof}
\begin{thm}\label{geo}
$(\varphi_t)_{0\leq t\leq 1}$ is a geodesic if and only if $(dd_{z,\zeta}^c\Phi(z,\zeta))^{n+1}=0$.
\end{thm}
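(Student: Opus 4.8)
The plan is to compute the full complex Hessian of $\Phi$ on $\Omega\times A$ and to read off the geodesic equation \eqref{1} from the vanishing of its determinant, using the expansion of $(dd^c_{z,\zeta}\Phi)^{n+1}$ established in the previous lemma. First I would record that $\Phi(z,\zeta)=\varphi_t(z)$ with $t=\log|\zeta|=\tfrac12\log(\zeta\bar\zeta)$ does not depend on $\arg\zeta$, so by the chain rule
$$\partial_\zeta\Phi=\frac{\dot\varphi_t}{2\zeta},\quad \partial_{\bar\zeta}\Phi=\frac{\dot\varphi_t}{2\bar\zeta},\quad \partial_{z_j}\partial_{\bar\zeta}\Phi=\frac{\partial_{z_j}\dot\varphi_t}{2\bar\zeta},\quad \partial_\zeta\partial_{\bar z_k}\Phi=\frac{\partial_{\bar z_k}\dot\varphi_t}{2\zeta},\quad \partial_\zeta\partial_{\bar\zeta}\Phi=\frac{\ddot\varphi_t}{4|\zeta|^2}.$$
In particular the correction form $R=dd^c_{z,\zeta}\Phi-dd^c_z\Phi$ of the lemma has no part of pure type in the $z$ variables; its purely-$\zeta$ component is $d_\zeta d^c_\zeta\Phi$, proportional to $\ddot\varphi_t\, i\,d\zeta\wedge d\bar\zeta/|\zeta|^2$, while its mixed component is assembled from the first derivatives of $\dot\varphi_t$ as above.

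Next I would substitute this into the formula of the lemma. Since $dd^c_z\Phi$ is a $(1,1)$-form in the $n$ variables $z$, one has $(dd^c_z\Phi)^{n+1}=0$, whence
$$(dd^c_{z,\zeta}\Phi)^{n+1}=(n+1)(dd^c_z\Phi)^n\wedge R+\frac{n(n+1)}{2}(dd^c_z\Phi)^{n-1}\wedge R^2.$$
In $(dd^c_z\Phi)^n\wedge R$ only the $d\zeta\wedge d\bar\zeta$-part of $R$ survives, giving a term proportional to $\ddot\varphi_t\,(dd^c\varphi_t)^n\wedge i\,d\zeta\wedge d\bar\zeta/|\zeta|^2$. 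In $R^2$ the only surviving piece is twice the wedge of the two mixed components of $R$, which contracts against $(dd^c_z\Phi)^{n-1}$ via the usual cofactor identity $(dd^c\varphi_t)^{n-1}\wedge i\,dz_j\wedge d\bar z_k\propto\varphi_t^{k\bar j}(dd^c\varphi_t)^n$; this produces a term proportional to $\big(\sum_{j,k}\varphi_t^{k\bar j}\partial_{z_j}\dot\varphi_t\,\partial_{\bar z_k}\dot\varphi_t\big)(dd^c\varphi_t)^n\wedge i\,d\zeta\wedge d\bar\zeta/|\zeta|^2=|\nabla\dot\varphi_t|^2_{\varphi_t}\,(dd^c\varphi_t)^n\wedge i\,d\zeta\wedge d\bar\zeta/|\zeta|^2$. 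Collecting the two contributions (equivalently, computing the determinant of the bordered Hessian matrix of $\Phi$ by a Schur complement) yields, up to a positive dimensional constant,
$$(dd^c_{z,\zeta}\Phi)^{n+1}=\frac{c_n}{|\zeta|^2}\Big(\ddot\varphi_t-|\nabla\dot\varphi_t|^2_{\varphi_t}\Big)\,(dd^c\varphi_t)^n\wedge i\,d\zeta\wedge d\bar\zeta,\qquad c_n>0.$$

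Since $\varphi_t\in\mathcal{H}$ we have $dd^c\varphi_t>0$, so $(dd^c\varphi_t)^n$ is a strictly positive volume form on $\Omega$, and $|\zeta|\ne 0$ on $A$; therefore $(dd^c_{z,\zeta}\Phi)^{n+1}\equiv 0$ on $\Omega\times A$ if and only if $\ddot\varphi_t-|\nabla\dot\varphi_t|^2_{\varphi_t}=0$ for every $(z,t)$, which is precisely the geodesic equation \eqref{1}. Hence $(\varphi_t)_{0\le t\le 1}$ is a geodesic if and only if $(dd^c_{z,\zeta}\Phi)^{n+1}=0$. The main obstacle is purely the form-level bookkeeping: one must check that the mixed part of $R^2$ reproduces the gradient term with exactly the sign and combinatorial constant needed so that the factor $\tfrac{n(n+1)}{2}$, together with the cofactor identity, combines with the $\ddot\varphi_t$ term into a single factor $(n+1)(\ddot\varphi_t-|\nabla\dot\varphi_t|^2_{\varphi_t})$; everything else is the elementary chain rule and linear algebra above. (The equivalence is understood for smooth strictly plurisubharmonic paths, as in the definition of geodesics; the degenerate situation is handled through the Perron--Bremermann envelope $\Phi$ introduced earlier.)
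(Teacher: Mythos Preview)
Your proposal is correct and follows essentially the same route as the paper: expand $(dd^c_{z,\zeta}\Phi)^{n+1}$ via the lemma, note that $(dd^c_z\Phi)^{n+1}=0$ for degree reasons, then compute the surviving pieces of $(dd^c_z\Phi)^n\wedge R$ and $(dd^c_z\Phi)^{n-1}\wedge R^2$ to obtain the factor $\ddot\varphi_t-|\nabla\dot\varphi_t|^2_{\varphi_t}$ in front of a strictly positive volume form. The only cosmetic differences are that you keep track of the chain-rule factor $1/|\zeta|^2$ (which the paper suppresses) and phrase the $R^2$ contribution via the cofactor/Schur identity rather than writing it directly as $n\,d_z\dot\varphi_t\wedge d^c_z\dot\varphi_t\wedge(dd^c\varphi_t)^{n-1}$.
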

\begin{proof}
 From the previous lemma, we have
\begin{eqnarray*}
(dd^c_{z,\zeta}\Phi(z,\zeta))^{n+1}&=&(dd^c_z\Phi(z,\zeta))^{n+1}+(n+1)(dd^c_z\Phi(z,\zeta))^n\wedge R\\
&&+ {n(n+1)\over 2}(dd^c_z\Phi(z,\zeta))^{n-1}\wedge R^2
\end{eqnarray*}
The first term in RHS  of the last equation equal to $0$ a cause of bi-degree. 
We have $$d_{\zeta}\Phi=\partial_{\zeta }\Phi +\bar{\partial}_{\zeta} \Phi=\frac{\partial \Phi}{\partial \zeta}d\zeta +\frac{\partial \Phi}{\partial \bar{\zeta}}d\bar{\zeta}=\dot{\varphi_t}(z)(d\zeta+d\bar{\zeta})$$ and $$d^c_{\zeta}\Phi=\frac{i}{2}(\bar{\partial}\Phi -\partial \Phi)=\frac{i}{2}(\frac{\partial\Phi}{\partial \bar{\zeta}}d\bar{\zeta}-\frac{\partial \Phi}{\partial \zeta}d\zeta)=\frac{i}{2}\dot{\varphi}_t(z)(d\zeta-d\bar{\zeta})$$
and we have also $d_{\zeta}d^c_{\zeta}\Phi=i\ddot{\varphi_t}(z)d\zeta\wedge d\bar{\zeta}$, which gives 
$$R=i\ddot{\varphi_t}(z)d\zeta\wedge d\bar{\zeta}+\frac{i}{2}d_{z}\dot{\varphi_t}\wedge d\bar{\zeta}-\frac{i}{2}d_z\dot{\varphi_t}\wedge d\zeta+d^c_z\dot{\varphi_t}\wedge d\zeta+d^c_z\dot{\varphi_t}\wedge d\bar{\zeta}$$
and 
$$R^2= 2i d_z\dot{\varphi}_t\wedge d^c_z\dot{\varphi}_t\wedge d\zeta\wedge d\bar{\zeta}  $$
Now we can explain the second term also. Indeed
\begin{eqnarray*}
  (dd^c_z\Phi)^n\wedge R &=& (dd_z^ c\varphi_t(z))^n\wedge(i\ddot{\varphi_t}(z)d\zeta\wedge d\bar{\zeta}+\frac{i}{2}d_{z}\dot{\varphi_t}\wedge d\bar{\zeta}\\
  &&-\frac{i}{2}d_z\dot{\varphi_t}\wedge d\zeta+d^c_z\dot{\varphi_t}\wedge d\zeta+d^c_z\dot{\varphi_t}\wedge d\bar{\zeta}) \\
   &=& i\ddot{\varphi_t}(dd^c_z\varphi_t)^n\wedge d\zeta\wedge d\bar{\zeta}
\end{eqnarray*}
And also for third term, we have 
\begin{eqnarray*}
  (dd^c_z\Phi)^{n-1}\wedge R^2 &=& (dd^c\varphi_t(z))^{n-1}\wedge R\wedge R \\
&=&(dd^c\varphi_t(z))^{n-1}\wedge 2i d_z\dot{\varphi}_t\wedge d^c_z\dot{\varphi}_t\wedge d\zeta\wedge d\bar{\zeta}\\
&=&- 2i d_z\dot{\varphi}_t\wedge d^c_z\dot{\varphi}\wedge(dd^c\varphi_t(z))^{n-1}\wedge d\zeta\wedge d\bar{\zeta}
\end{eqnarray*}
After the previous equations we have,
{\footnotesize
\begin{eqnarray*}
  (dd^c_{z,\zeta}\Phi)^{n+1}&=& (n+1)(dd^c_z\Phi(z,\zeta))^n\wedge R+ {n(n+1)\over 2}(dd^c_z\Phi(z,\zeta))^{n-1}\wedge R^2\\
&=& i(n+1)(\ddot{\varphi_t}(dd_z^c\varphi_t)^n-n d_z\dot{\varphi}_t\wedge d^c_z\dot{\varphi_t}\wedge(dd^c\varphi_t(z))^{n-1}\wedge d\zeta\wedge d\bar{\zeta}\\
&=&i(n+1)\left(\ddot{\varphi_t}-\frac{n d_z\dot{\varphi}_t\wedge d^c_z\dot{\varphi_t}\wedge(dd^c\varphi_t(z))^{n-1}}{(dd^c_z\varphi_t)^n}\right)(dd^c_z\varphi_t)^n\wedge d\zeta\wedge d\bar{\zeta}
\end{eqnarray*}
}
From the fact that  $nd(\dot{\varphi_t})\wedge d^c(\dot{\varphi_t})\wedge (dd^c\varphi_t)^{n-1}=\ddot{\varphi_t}(dd^c\varphi_t)^n$, we infer that $\varphi_t$ is geodesic if and only if
$$(dd^c_{z,\zeta}\Phi(z,\zeta)^{n+1}=0.
$$
\end{proof}
After the previous theorem we deduce that the geodesic problem  in Mabuchi space is equivalent to the following Dirichlet problem:
 \begin{equation*}
   \left\{
     \begin{array}{ll}
       (dd^c_{z,\zeta}\Phi(z,\zeta))^{n+1}=0 & \hbox{$\Omega\times A$} \\
       \Phi(z,\zeta)=\varphi_0(z) & \hbox{$ \Omega\times \{|\zeta|=1\}$} \\
       \Phi(z,\zeta)=\varphi_1(z) & \hbox{$\Omega\times \{|\zeta|=e\}$} \;\;(3)\\
  \Phi(z,\zeta)=0 & \hbox{$\partial \Omega\times A$}
     \end{array}
   \right.
\end{equation*}
   \subsection{Continuous envelopes}
 We have that $\varphi_0$ and $\varphi_1$ are smooth, in the sequel we can assume that $\varphi_0$ and $\varphi_1$ are only $C^{1,1}$.
\begin{defi}
The  Perron-Bremermann envelope is defined by  $$\Phi(z,\zeta)=\sup\{u(z,\zeta) \in\mathcal{F}(\Psi,\Omega\times A)\}$$ with
  $$\mathcal{F}(\Psi,\Omega \times A)=\{u\in PSH(\Omega\times A)\cap C^{0}(\bar{\Omega}\times \bar{A})\;/\;\;u^{*}\leq \Psi\;  \;on \;\partial(\Omega\times A)\}$$
Where $\Psi|_{\partial\Omega \times \bar{A }}=0$ \;\;and\;\; $\Psi _{\partial A\times \Omega}=\left\{
                                                                                 \begin{array}{ll}
                                                                                   \varphi_{0}(z)& \hbox{$\{|\zeta|=1\}$;} \\
                                                                                  \varphi_1(z), & \hbox{$\{|\zeta|=e\}$.}
                                                                                 \end{array}
                                                                               \right.$\\
.
\end{defi}
\begin{thm} \label{zero}If $\Psi\in C^{0}(\partial(\Omega\times A))$. Then the Perron-Bremermann envelope  $\Phi$ satisfies the following conditions:\\
    i)\;$\Phi\in PSH(\Omega\times A)\cap C^{0}(\bar{\Omega}\times \bar{A})$.\\
    ii) \;$\Phi|_{\partial(\Omega\times A)}=\Psi$.\\
    iii)\;$( dd_{z,\zeta}^c\Phi(z,\zeta))^{n+1}=0 $\;\;in\;\; $\Omega\times A$.
\end{thm}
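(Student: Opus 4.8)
The plan is to adapt, to the product domain $D:=\Omega\times A$, the Bedford--Taylor resolution \cite{BT76} of the Dirichlet problem for the homogeneous complex Monge--Amp\`ere operator $(dd^c\cdot)^{n+1}$; throughout I write $\rho$ for a smooth strictly plurisubharmonic defining function of $\Omega$, so that $dd^c_z\rho>0$ on a neighbourhood of $\bar\Omega$, and I set $D=\Omega\times A$.

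First I would check that $\mathcal F(\Psi,D)$ is non-empty and locally bounded above, so that $\Phi$ is finite and its upper semicontinuous regularisation $\Phi^{*}$ is plurisubharmonic on $D$ (Brelot--Cartan). For non-emptiness, with $c:=\min_{\partial D}\Psi$ and $M$ large the function $u_0(z,\zeta):=c+M\big(\rho(z)+|\zeta|^{2}-e^{2}\big)$ is plurisubharmonic on $D$ and continuous on $\bar D$, since $dd^c_z\rho+dd^c_\zeta|\zeta|^{2}$ is a strictly positive $(1,1)$-form on $\C^{n+1}$, and $u_0\le c\le\Psi$ on $\partial D$ because $\rho+|\zeta|^{2}-e^{2}\le 0$ on $\bar D$; hence $u_0\in\mathcal F(\Psi,D)$. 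Boundedness above is the maximum principle: every $u\in\mathcal F(\Psi,D)$ is $\le\max_{\partial D}\Psi$ on $D$.

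Next I would establish (ii) and continuity of $\Phi$ up to $\partial D$. Both factors are B-regular ($\Omega$ is strongly pseudoconvex; $A$ is a smoothly bounded planar domain, which even admits a holomorphic peak function at each boundary point, e.g.\ $\mathrm{Re}\big(1/(\bar\zeta_0\zeta)\big)-1$ at a point $\zeta_0$ with $|\zeta_0|=1$), so the product $D$ is B-regular too; in particular there exist $\tilde\Psi,\,g\in PSH(D)\cap C^{0}(\bar D)$ with $\tilde\Psi=\Psi$ and $g=-\Psi$ on $\partial D$. Then $\tilde\Psi\in\mathcal F(\Psi,D)$ gives $\Phi\ge\tilde\Psi$; and for any $u\in\mathcal F(\Psi,D)$ the plurisubharmonic function $u+g$ is $\le 0$ on $\partial D$, hence $\le 0$ on $D$, so $\Phi\le-g$. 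Thus $\tilde\Psi\le\Phi\le-g$ on $D$ with $\tilde\Psi=-g=\Psi$ on $\partial D$, which gives (ii) and continuity of $\Phi$ at each boundary point. For interior continuity I would run Walsh's argument as in \cite{BT76}, comparing $\Phi$ with its small translates and patching these near $\partial D$ into elements of $\mathcal F(\Psi,D)$ with the help of the bounds above; this produces a modulus of continuity for $\Phi$ independent of the point, whence $\Phi=\Phi^{*}\in PSH(D)\cap C^{0}(\bar D)$, which is (i). Transplanting the Bedford--Taylor regularity machinery to $\Omega\times A$ --- a domain that is only weakly pseudoconvex, has corners along $\partial\Omega\times\partial A$, and whose annular factor is merely B-regular rather than strictly pseudoconvex --- is the step I expect to be the main obstacle.

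Finally, for (iii) I would use a balayage argument. If $(dd^c_{z,\zeta}\Phi)^{n+1}\not\equiv0$ there is a Euclidean ball $B\Subset D$ with $\int_{B}(dd^c_{z,\zeta}\Phi)^{n+1}>0$. Applying \cite{BT76} on the strictly pseudoconvex ball $B$ with boundary data $\Phi|_{\partial B}$ yields $u:=\sup\{w\in PSH(B)\cap C^{0}(\bar B):\ w\le\Phi\text{ on }\partial B\}$, which lies in $PSH(B)\cap C^{0}(\bar B)$, equals $\Phi$ on $\partial B$, and solves $(dd^c u)^{n+1}=0$ in $B$. Since $\Phi|_{B}$ is a competitor for this envelope, $u\ge\Phi$ on $B$, and $u\not\equiv\Phi$ on $B$ (otherwise $(dd^c_{z,\zeta}\Phi)^{n+1}$ would vanish on $B$). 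The function $\widetilde\Phi$ equal to $u$ on $B$ and to $\Phi$ on $D\setminus B$ is continuous (because $u=\Phi$ on $\partial B$) and plurisubharmonic on $D$ (gluing lemma, using $\limsup_{B\ni x\to x_0}u(x)=\Phi(x_0)$ for $x_0\in\partial B$), and it equals $\Psi$ on $\partial D$, so $\widetilde\Phi\in\mathcal F(\Psi,D)$. But $\widetilde\Phi>\Phi$ at some point of $B$, contradicting $\Phi=\sup\mathcal F(\Psi,D)$; hence $(dd^c_{z,\zeta}\Phi)^{n+1}=0$ in $\Omega\times A$.
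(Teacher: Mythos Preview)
Your argument is correct and follows the same Bedford--Taylor/Perron--Bremermann strategy as the paper: bound $\Phi$ above and below by barriers to get boundary values and boundary continuity, run Walsh's translation argument for interior continuity, and use a balayage step for maximality. Two tactical differences are worth recording. For part (ii) the paper constructs an explicit local barrier $b(z,\zeta)=B\rho(z)-|z-\xi_0|^2-|\zeta-\eta_0|^2$ at each boundary point, whereas you invoke B-regularity of the product (via B-regularity of each factor) to obtain global continuous psh extensions $\tilde\Psi$ and $g$ of $\Psi$ and $-\Psi$; your route is cleaner and more general, and in fact makes your later worry about ``corners along $\partial\Omega\times\partial A$'' and the annulus being ``merely B-regular'' unnecessary, since the whole $C^0$ argument (barriers, Walsh, balayage) requires only B-regularity, not strict pseudoconvexity or smoothness of $\partial D$. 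For part (iii) the paper uses Choquet's lemma to write $\Phi$ as an increasing limit of finite maxima, balayages each on a ball, and passes to the limit via Bedford--Taylor monotone convergence; your direct contradiction argument (solve the homogeneous problem on a small ball and glue) is a standard and slightly more economical variant that avoids the monotone-convergence step.
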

\begin{proof}

Let $\rho$ be a strictly  plurisubharmonic defining  of $\Omega=\{ \rho<0\}$. Observe that the family  $\mathcal{F}(\Psi,\Omega \times A)$ is not empty .\\
i)We start   by  proving  the plursubharmonicity  of $\Phi$ in $\Omega\times A$.
We can write the Dirichlet  problem on  following way:
\begin{equation*}\left\{
    \begin{array}{ll}
      (dd^c_{z,\zeta}\Phi(z,\zeta))^{n+1}=0 & \hbox{$\Omega \times A$}\;\;  \\
      \Phi(z,\zeta)=\Psi(z,\zeta)& \hbox{$\partial (\Omega\times A)$}
    \end{array}
  \right.
\end{equation*}
with  $\Psi(z, \zeta)={1\over e^2-1}(\varphi_1(z)(|\zeta|^2-1)-\varphi_0(z)(|\zeta|^2-e^2))$.
 Let $h\in Har(\Omega \times A)\cap C^0(\bar{\Omega}\times \bar{A})$ be a harmonic function in $\Omega \times A$, continuous up to the boundary  of $\Omega\times A$, the solution of the following Dirichlet  problem
      $$\left\{
          \begin{array}{ll}
            \Delta_{z,\zeta} h(z,\zeta)=0, & \hbox{$\Omega \times A$} \\
            h=\Psi, & \hbox{$\partial (\Omega \times A)$}
          \end{array}
        \right.
      $$
    Exists, since $\Omega \times A$ is a regular domain.\\
For all $v\in\mathcal{F}(\Psi,\Omega \times A)$, we have $v^{*}\leq  \Psi$ on  $\partial (\Omega \times A)$,which implies $$(v-h)^{*}\leq  0\; on  \;\partial (\Omega \times A)$$ Furthermore we have $$\Delta_{z,\zeta} (v-h)(z,\zeta)=\Delta_{z,\zeta} v(z,\zeta)\geq 0\;\;in  \;\;\Omega\times A$$ Then by maximum principle
   $$v(z,\zeta)\leq h(z,\zeta) \;\; in \;\; \Omega \times A$$ the last inequality  holds for every  function in  $\mathcal{F}(\Psi,\Omega \times A)$ , hence it holds for upper envelope of subsolution
$$\Phi(z,\zeta)\leq h(z,\zeta)\;\;in \;\; \Omega \times A$$
It also holds  for its  upper semi-continuous regularization on  the boundary  $(\Omega \times A)$, we get  $$(\Phi(z,\zeta))^*\leq \Psi(z,\zeta)\;\; on\;\; \partial (\Omega\times A)$$ and consequently $$\Phi^*\in \mathcal{F}(\Psi, \Omega\times A)$$
   Since the  function  $\Phi^*$ is  plurisubharmonic  in  $\Omega \times A$ and $$\Phi(z,\zeta)\leq \Phi(z,\zeta))^* \;\; in \;\; \Omega\times A$$ we infer that  $$(\Phi(z,\zeta))^*= \Phi(z,\zeta)\;\; in \;\; \Omega \times A.$$ hence  $\Phi$ is plurisubharmonic function in  $\Omega \times A$.\\
Since   $\Phi$ is plurisubharmonic in $\Omega \times A$, implies  $\Phi$ is  upper semi continuous. We now prove it is lower upper semi-continuous. Indeed
Fix $\epsilon>0$ and since   $\partial(\Omega\times A)=(\partial \Omega\times \bar{A})\cup(\bar{\Omega}\times \partial A)$ is compact  and the function $\Psi$ is  continuous on   $\partial(\Omega\times A)$, we can choose  $\beta > 0$ so small that $$(z,\zeta)\in\Omega \times A,  \forall(\xi, \eta)\in\partial(\Omega\times A)
\Vert(z,\zeta)-(\xi,\eta)\Vert\leq \beta\Rightarrow \vert\Phi(z,\zeta)-\Psi(\xi,\eta)\vert\leq \epsilon$$
Fix  $a=(a_1,a_2)\in \mathbb{C}^{n}\times\mathbb{C}$ with  $\Vert a\Vert\leq \beta $. So, We have  the following inequality
\begin{equation*}
    \Phi(\xi+a_1,\eta+a_2)\leq \Psi(\xi,\eta)+\varepsilon \;\; if\; (\xi,\eta)\in(\Omega\times A \setminus\{a\})\cup\partial(\Omega\times A)
\end{equation*}
and
\begin{equation*}
    \Phi^{*}(z+a_1,\zeta+a_2)\leq \Psi(z+\alpha,\zeta+a_2)+\varepsilon\leq \Phi(z,\zeta)+\varepsilon\;\; if\; \Omega \times A\cap\partial((\Omega\times A) \setminus\{a\})
\end{equation*}
It follows that the function
{\footnotesize
$$W(z,\zeta)=\left\{
  \begin{array}{ll}
    \max(\Phi(z,\zeta),\Phi(z+a_1,\zeta+a_2)-2\varepsilon\, & \hbox{$(z,\zeta)\in(\Omega\times A)\setminus(\Omega\times A) \setminus\{a\} $;} \\
   \Phi(z,\zeta), & \hbox{$(z,\zeta)\in(\Omega\times A)\cap(\Omega\times A) \setminus\{a\}$.}
  \end{array}
\right.$$
}
is plurisubharmonic in  $\Omega\times A$
because \\
1) If  $(z,\zeta)\in(\Omega\times A)\cap(\Omega\times A) \setminus\{a\}$ it coincides with $\Phi$ which is plurisubharmonic.\\
2) If  $(z,\zeta)\in(\Omega\times A)\setminus(\Omega\times A) \setminus\{a\}$,  it is the  maximum of two   plurisubharmonic functions .\\
3) After the two previous  inequalities, we infer that the  function  $W$ coincides on the boundary, furthermore $$W\leq \Psi\;\; on \;\;\partial (\Omega\times A)$$ Which implies  $W\in \mathcal{F}(\Omega\times A,\Psi)$,  finally we get $$\Phi(z+a_1,\zeta+a_2)-2\varepsilon \leq \Phi(z,\zeta)\;for  \;(z,\zeta)\in\Omega\times A\; and \;a\in \mathbb{C}^{n+1},||a||\leq \beta$$
Thus   $\Phi$ is lower  semi-continuous, therefore it is continuous .\\
ii)We are going to prove that  $$\lim\limits_{\Omega\times A\ni(z,\zeta)\rightarrow (\xi_0,\eta_0)\in   \partial(\Omega\times A)} \Phi(z,\zeta)=\Psi(\xi_0,\eta_0)$$ Firstly, since  $\Phi\in \mathcal{F}(\Psi,\Omega\times A)$ we have   $$\limsup_{(z,\zeta)\to(\xi_0,\eta_0)}\Phi(z,\zeta)\leq \Psi(\xi_0,\eta_0)\; \forall(\xi_0,\eta_0)\in \partial (\Omega \times A)$$
To prove the reverse of  inequality, we  construct a plurisubharmonic  barrier  function at each point $(\xi_0,\eta_0)=\gamma_0\in \partial(\Omega\times A)$. Since  $\rho$ is strictly plursubharmonic function,  we can choose $B$ large enough so that the function  $$ b(z,\xi):=B\rho(z)-|z-\xi_0|^2-|\zeta-\eta_0|^2$$ is  plurisubharmonic  in $\Omega\times A$ and continuous up to the boundary such that $b(\xi_0,\eta_0)\leq0$ with $b <0$ for all $ (z,\zeta)\in\bar{\Omega} \times \bar{A}\setminus \gamma_0$. \\
Fix $\epsilon>0$ and take $\eta>0 $ such that 
      $\Psi(\gamma_0)-\epsilon \leq \Psi(\gamma )\; \forall \gamma \in\partial (\Omega \times A)$ and $|\gamma-\gamma_0|\leq \eta$.
       We choose a big constant $C$ so that  $$Cb+\Psi(\gamma_0)-\varepsilon\leq \Psi\;\; on\;\; \partial(\Omega \times A)$$This implies that  the  function $V(z,\zeta)=Cb(z,\zeta)+\Psi(\gamma_0)-\varepsilon\in PSH(\Omega\times A)$ is    $$V\leq\Psi\;\;on \;\;\partial(\Omega\times A)$$ Thus we have  $V\in \mathcal{F}(\Psi,\Omega\times A)$  which  implies $V(z,\zeta)\leq \Phi(z,\zeta)$ in $\Omega \times A$. We get 
       $$\Psi(\xi_0,\eta_0)-\varepsilon\leq \liminf_{(z,\zeta)\to(\xi_{0};\eta_{0})}\Phi(z,\zeta)$$
   therefore
       $$\lim_{(z,\zeta)\to(\xi_{0};\eta_{0})}\Phi(z,\zeta)=\Psi(\xi_0,\eta_0) \; \forall (\xi_0,\eta_0)\in\partial(\Omega\times A)$$
       iii)
        The Perron Bremermann envelope $$\Phi(z,\zeta)=\sup \{u(z,\zeta)\in \mathcal{F}(\Omega\times A,\Psi)\}$$ is plurisubharmonic continuous up the boundary of  $\Omega\times A$ and  \;$ \Phi|_{\partial(\Omega\times A)}=\Psi$.\\
By a lemma due to Choquet, this  envelope can be realised by a countable family  $$\Phi(z,\zeta)=\sup\{u(z,\zeta)\in\mathcal{F}(\Omega\times A,\Psi)\}=\sup_{j} \{u_{j}(z,\zeta)\in\mathcal{F}(\Omega\times A,\Psi)\}$$
We put  $$\Phi_j(z,\zeta)=\max(u_1(z,\zeta),u_2(z,\zeta)......u_j(z,\zeta))\nearrow \Phi(z,\zeta)$$ the function  $\Phi_j$is increasing and$$(\Phi(z,\zeta))^{*}=(\sup_{j}\{\Phi_j(z,\zeta)\})^*$$
Let  $\mathbb{B}\subset\subset \Omega \times A$ be any  ball, we consider the following Dirichlet problem  $$\left\{
                        \begin{array}{ll}
                          (dd^c(u_j(z,\zeta))^{n+1}=0, & \hbox{$\mathbb{B}$;} \\
                          u_j=\Phi_j, & \hbox{$\partial \mathbb{B}$.}
                        \end{array}
                      \right.$$
since  $$(dd^c _{z,\zeta}u_j(z,\zeta))^{n+1}\leq (dd^c_{z,\zeta}\Phi_j(z,\zeta))^{n+1}\quad\hbox{in}\quad\mathbb{B}$$ and
$$
u_j=\Phi_j\quad\hbox{on}\quad\partial\mathbb{B}
$$
we have   $$\Phi_j(z,\zeta)\leq u_j(z,\zeta)\;\;in\;\; \mathbb{B}.$$
We consider  the following function  $$\Theta(z,\zeta)=\left\{
                                   \begin{array}{ll}
                                     u_j(z,\zeta), & \hbox{$(z,\zeta)\in \mathbb{B}$;} \\
                                     \Phi_j(z,\zeta), & \hbox{$(z,\zeta)\in \bar{\Omega}\times \bar{A}\setminus\mathbb{B}$.}
                                   \end{array}
                                 \right.$$

The function $\Theta_j$ belongs  to $\mathcal{F}(\Omega\times A,\Psi)\}$. This implies  $$\Theta_j (z,\zeta)\leq \Phi_j(z,\zeta)\;in \; \Omega\times A$$ furthermore  $$\Theta_j=\Phi_j=\Psi\; on \; \partial(\Omega\times A)$$ then  $$u_j(z,\zeta)=\Phi_j(z,\zeta)\; in \; \mathbb{B}$$  therefore
$$(dd^c_{z,\zeta}(\Phi_j(z,\zeta)))^{n+1}=(dd^c_{z,\zeta}(u_j(z,\zeta)))^{n+1}=0\; in \; \mathbb{B}$$ since  $\mathbb{B}$ is arbitrary we give  $$(dd^c_{z,\zeta}\Phi_j(z,\zeta))^{n+1}=0\; in  \;\Omega\times A$$ By the   continuity property of  Monge-Am\`epre operators of  Bedford and Taylor along monotone sequences, we have
$$(dd^c_{z,\zeta}(\Phi_j(z,\zeta))^{n+1}\longrightarrow (dd^c_{z,\zeta}(\Phi(z,\zeta))^{n+1}=0$$
i.e  $$(dd^c_{z,\zeta}(\Phi(z,\zeta))^{n+1}=0 \; \;in\; \;  \Omega\times A.$$
\end{proof}
  \subsection{Lipschitz regularity}
  In this subsection we will give the geodesic regularity Lipschitz in time and in space. We begin by regularity Lipschitz in time. We use a barrier argument as noted by Berndtsson \cite{Bern13}
 \begin{prop}The Perron Bremermann  envelope $\Phi(z,\zeta)=\sup\{u(z,\zeta)/ u \in \mathcal{F}(\Omega\times A,\Psi\}$  is Lipschitz function with respect to $t=\log|\zeta|$.
 \end{prop}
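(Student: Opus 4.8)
The plan is to exploit the rotational symmetry of the data together with two explicit affine barriers, following Berndtsson's barrier argument transported to the annulus $A$. First I would observe that the datum $\Psi$, and hence the whole family $\mathcal{F}(\Omega\times A,\Psi)$, is invariant under the rotations $R_\theta\colon(z,\zeta)\mapsto(z,e^{i\theta}\zeta)$: these extend to holomorphic automorphisms of $\Omega\times A$ which fix its boundary and preserve $\Psi$, so $u\mapsto u\circ R_\theta$ is a bijection of $\mathcal{F}(\Omega\times A,\Psi)$ onto itself, whence $\Phi\circ R_\theta=\Phi$. Writing $t=\log|\zeta|\in[0,1]$ we may thus regard $\Phi$ as a function $\Phi(z,t)$. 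For fixed $z$ the map $w\mapsto\Phi(z,e^{w})$ is subharmonic on the strip $\{0<\mathrm{Re}\,w<1\}$ and independent of $\mathrm{Im}\,w$, hence $t\mapsto\Phi(z,t)$ is convex on $[0,1]$; by Theorem \ref{zero} it is continuous and has endpoint values $\Phi(z,0)=\varphi_0(z)$, $\Phi(z,1)=\varphi_1(z)$.

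Next I would build the barriers. Set $C:=\Vert\varphi_0-\varphi_1\Vert_{C^0(\bar{\Omega})}$ and
\[
u_0(z,\zeta):=\varphi_0(z)-C\log|\zeta|,\qquad u_1(z,\zeta):=\varphi_1(z)-C\bigl(1-\log|\zeta|\bigr).
\]
Since $\log|\zeta|$ is pluriharmonic and each $\varphi_i$ is plurisubharmonic, $dd^c_{z,\zeta}u_i=dd^c_z\varphi_i\ge 0$, so $u_0$ and $u_1$ are plurisubharmonic on $\Omega\times A$ and continuous up to the boundary. A check on the three faces of $\partial(\Omega\times A)$ — using $\varphi_i=0$ on $\partial\Omega$, $\log|\zeta|\in\{0,1\}$ on $\partial A$, and the choice of $C$ — shows $u_0,u_1\le\Psi$ on $\partial(\Omega\times A)$. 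Hence $u_0,u_1\in\mathcal{F}(\Omega\times A,\Psi)$, and comparison with the envelope gives, for all $(z,t)$,
\[
\Phi(z,t)\ \ge\ \varphi_0(z)-Ct,\qquad \Phi(z,t)\ \ge\ \varphi_1(z)-C(1-t).
\]

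Finally I would feed these one-sided estimates into the convexity of $f(t):=\Phi(z,t)$. Fixing $0\le t'<t\le 1$: comparing $f(t')$ with the chord of the convex function $f$ over $[0,t]$ and using $f(t')-f(0)=f(t')-\varphi_0(z)\ge -Ct'$ yields $f(t)-f(t')\ge -C(t-t')$, while comparing $f(t)$ with the chord of $f$ over $[t',1]$ and using $\varphi_1(z)-f(t')\le C(1-t')$ yields $f(t)-f(t')\le C(t-t')$. Therefore $|\Phi(z,t)-\Phi(z,t')|\le C\,|t-t'|$ with $C$ independent of $z$, which is the assertion. The boundary verifications for $u_0,u_1$ and the two chord inequalities are entirely routine; the only step needing a little care is the opening one, namely making precise that $\Phi$ is genuinely $R_\theta$-invariant (so that the reduction to convexity in $t$ is legitimate) and that the $C^{1,1}$ endpoints $\varphi_i$ are plurisubharmonic, so that $u_0,u_1$ are admissible competitors. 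I do not expect a serious obstacle beyond this.
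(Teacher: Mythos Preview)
Your proof is correct and follows essentially the same barrier-and-convexity scheme as the paper: the paper packages your two competitors $u_0,u_1$ into the single function $\chi(z,\zeta)=\max(\varphi_0(z)-A\log|\zeta|,\ \varphi_1(z)+A(\log|\zeta|-1))$ and then bounds the one-sided derivatives $\dot\varphi(z,0)\ge -A$, $\dot\varphi(z,1)\le A$ before invoking convexity in $t$, which is exactly your chord argument in derivative form. Your explicit constant $C=\Vert\varphi_0-\varphi_1\Vert_{C^0(\bar\Omega)}$ and your justification of the $R_\theta$-invariance are cleaner than the paper's treatment, but the underlying idea is identical.
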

 \begin{proof} The proof follows from a classical balayage technique. Indeed, we consider the following  function   $$\chi(z,\zeta)=\max(\varphi_0(z)-A\log|\zeta|,\varphi_1(z)+A(\log|\zeta|-1))$$
where  $A>0$ is  a big constant. Furthermore
\begin{eqnarray*}
  \chi(z,\zeta)|_{\Omega\times \{|\zeta|=1\}} &=& \max(\varphi_0(z),\varphi_1(z)-A)=\varphi_0 (z)\\
  \chi(z,\zeta)|_{ \Omega\times \{|\zeta|=e\}} &=& \max(\varphi_0(z)-A,\varphi_1(z))=\varphi_1(z) \\
    \chi(z,\zeta)|_{\partial \Omega\times A } &=& \max(-A\log|\zeta|, A(\log|\zeta|-1)\leq 0
\end{eqnarray*}
The last line follows by  $\varphi_0=\varphi_1=0 $ on  $\partial \Omega$ and  $1<|\zeta|<e$. Then   $\chi$ it belongs to $\mathcal{F}(\Omega\times A,\Psi)$ and
  $$\chi(z,\zeta)\leq \Phi(z,\zeta) \; in\; \Omega\times A$$
since  $\Phi(z,\zeta)=\varphi(z,\log|\zeta|)$ and $\chi(z,\zeta)=\chi(z,\log|\zeta|)$, which implies
  $${\varphi(z,\log|\zeta|)-\varphi(z,1)\over \log|\zeta|}\geq {\chi(z,\zeta)-\varphi(z,1)\over \log|\zeta|}={\chi(z,\zeta)-\chi(z,1)\over \log|\zeta|}$$
$$\lim _{|\zeta|\rightarrow 1}{\chi(z,\zeta)-\chi(z,1)\over \log|\zeta|}=\lim _{|\zeta|\rightarrow 1}{\varphi_0(z)-A\log(|\zeta|)-\varphi_0(z)\over \log|\zeta|}=-A$$
which gives  $\dot{\varphi}(z,0)\geq -A$, similarly for other case  $\dot{\varphi}(z,1)\leq A$. Since the function $\varphi_t$ is  convex along  $t$ (by subharmonicity in $\zeta$),we infer that for almost everywhere $z$,$t$,
$$-A\leq \dot{\varphi}(z,0)\leq \dot{\varphi}(z,t)\leq \dot{\varphi}(z,1)\leq A$$
then  $\varphi_t$ is  uniformly  Lipschitz at $t=\log|\zeta|$.
 \end{proof}
 We will prove the regularity Lipschitz in space by adapting the method of Bedford and Taylor \cite{BT76}(see also  \cite{GZ17}).
  \begin{thm}\label{lip}
The Perron Bremermann  envelope $\Phi(z,\zeta)=\sup\{u(z,\zeta)/ v \in \mathcal{F}(\Omega\times A,\Psi\}$ is Lipschitz function up to the boundary with respectively to space variable.
\end{thm}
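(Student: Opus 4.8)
The strategy is the classical comparison/translation argument of Bedford--Taylor, adapted to the product domain $\Omega\times A$; the key geometric input is that $\Omega$ is strongly pseudoconvex, so that a single uniform exterior estimate can be propagated into the interior using the maximality of $\Phi$. First I would fix a small vector $h=(h_1,h_2)\in\CC^n\times\CC$ and compare $\Phi$ with the translated function $\Phi(z+h_1,\zeta+h_2)$, which is again plurisubharmonic on the shifted domain $(\Omega\times A)-h$ and solves the same homogeneous Monge--Amp\`ere equation there. The point is to produce a constant $C>0$, independent of $h$, such that
\begin{equation*}
\Phi(z+h_1,\zeta+h_2)-C|h|\ \le\ \Phi(z,\zeta)\qquad\text{on}\quad \partial\bigl((\Omega\times A)\cap((\Omega\times A)-h)\bigr),
\end{equation*}
because then the function $W(z,\zeta):=\max\bigl(\Phi(z,\zeta),\,\Phi(z+h_1,\zeta+h_2)-C|h|\bigr)$, extended by $\Phi$ outside the overlap, belongs to $\mathcal{F}(\Omega\times A,\Psi)$ (exactly as in the lower-semicontinuity argument already carried out in the proof of Theorem \ref{zero}), hence is $\le\Phi$, which forces $\Phi(z+h_1,\zeta+h_2)-\Phi(z,\zeta)\le C|h|$ everywhere; swapping the roles of the two points gives the two-sided Lipschitz bound.

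The boundary estimate itself splits according to which piece of $\partial(\Omega\times A)$ one is near. On $\bar\Omega\times\partial A$ the boundary data $\Psi$ is $C^{1,1}$ in the $z$-variable and smooth in $\zeta$, so $|\Psi(z+h_1,\zeta+h_2)-\Psi(z,\zeta)|\le C|h|$ is immediate; combined with the modulus-of-continuity control of $\Phi$ up to the boundary established in Theorem \ref{zero}(ii), this handles points where $\zeta$ is close to $\partial A$. The delicate part is near $\partial\Omega\times\bar A$, where the overlap of $\Omega$ and its translate degenerates: here I would invoke strong pseudoconvexity, using the strictly plurisubharmonic defining function $\rho$ to build a barrier of the form $B\rho(z)-|z-\xi_0|^2-|\zeta-\eta_0|^2$ — precisely the barrier already constructed in the proof of Theorem \ref{zero} — and combine it with the Lipschitz regularity in $t=\log|\zeta|$ proved in the previous proposition, together with the $C^{1,1}$ (hence Lipschitz) boundary data, to dominate the increment $\Phi(z+h_1,\zeta+h_2)$ near such boundary points. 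The fact that $\rho$ is \emph{strictly} plurisubharmonic is what makes the barrier constant $B$, and hence the final $C$, independent of the base point $\gamma_0$.

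The main obstacle is this last step: controlling the translate uniformly near $\partial\Omega\times\bar A$, because $\partial\Omega$ is only assumed strongly pseudoconvex (not flat, unlike the model case), so one cannot simply translate inside $\Omega$ — a generic translation moves points out of $\bar\Omega$. The resolution is to absorb the "bad" transverse direction into the barrier term $B\rho$: for $|h|$ small the set $\{\rho<0\}\cap(\{\rho<0\}-h_1)$ contains $\{\rho<-K|h|\}$ for a uniform $K$, and on the annular collar $\{-K|h|\le\rho<0\}$ the barrier $C|h|\cdot(1 + B\rho/(K|h|))$-type correction, together with the already-known boundary modulus of continuity of $\Phi$, gives the required inequality. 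Once the boundary comparison holds with a uniform $C$, the balayage/maximality argument closes the proof verbatim as in Bedford--Taylor, and letting $h\to0$ along coordinate directions yields that $\Phi(\cdot,\zeta)$ is Lipschitz on $\bar\Omega\times\bar A$ with a constant depending only on $\Omega$, $\varphi_0$ and $\varphi_1$.
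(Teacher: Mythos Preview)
Your overall strategy---translate, compare on the boundary of the overlap, then balayage via the maximality of $\Phi$---is exactly Bedford--Taylor and is the route the paper takes. Two points of divergence are worth noting.

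First, a minor one: the statement concerns only the space variable, so the paper translates in $z$ alone ($h_2=0$); the Lipschitz bound in $\zeta$ is already the content of the preceding proposition and need not be redone.

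Second, and more substantively, the paper handles the delicate boundary piece $\partial\Omega\times\bar A$ not by your annular-collar correction but by first constructing \emph{global} $C^{1,1}$ plurisubharmonic sub- and super-extensions of the boundary data. Concretely, one builds $F\in\psh(\Omega'\times A)\cap C^{1,1}$ on a neighbourhood $\Omega'\supset\bar\Omega$ with $F=\Psi$ on $\partial(\Omega\times A)$ (near $\partial\Omega\times A$ one simply takes $F=D\rho$ for $D\gg1$; near $\bar\Omega\times\partial A$ one takes a cutoff combination $\tilde\chi(\zeta)\varphi_0(z)+(1-\tilde\chi(\zeta))\varphi_1(z)+B\alpha(\zeta)$), and similarly a $C^{1,1}$ function $G$ with $G=-\Psi$ on the boundary. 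The comparison principle yields $F\le\Phi\le -G$ on $\Omega\times A$, which already encodes the boundary Lipschitz bound. One then \emph{extends} $\Phi(\cdot,\zeta)$ to $\Omega'$ by declaring it equal to $F(\cdot,\zeta)$ on $\Omega'\setminus\Omega$; this makes $\Phi(z+h,\zeta)$ well defined and psh for all small $h$, and the required inequality $\Phi(z+h,\zeta)-C|h|\le\Phi(z,\zeta)$ on the boundary of the overlap follows immediately from the Lipschitz regularity of $F$ and $G$, with no collar analysis needed.

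Your proposal is not wrong---the collar argument with the barrier $B\rho$ is in essence a local version of the global sandwich $D\rho\le\Phi\le 0$---but it is less transparent, and the phrase ``together with the already-known boundary modulus of continuity of $\Phi$'' is a red herring: Theorem~\ref{zero} gives only continuity, and what actually upgrades this to Lipschitz is precisely the linear vanishing of $\rho$ at $\partial\Omega$. Packaging that observation as a single global subsolution $F$ (and supersolution $-G$) is what buys the paper a clean one-line boundary comparison.
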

\begin{proof} Let $\rho$  be a smooth defining of $\Omega$ which is strictly psh in a neighbourhood $\Omega'$ of $\Omega$, and also $\alpha$  be a smooth defining of $A$ which is strictly psh in a neighbourhood $A'$ of $A$.
   We will construct an extension of  function  defined on the boundary of  $\Omega\times A$ by
       $$
      \Psi(z,\zeta)= \left\{
         \begin{array}{ll}
           \varphi_0(z) & \hbox{$\Omega\times \{|\zeta|=1\}$} \\
            \varphi_1(z) & \hbox{$\Omega\times \{|\zeta|=e\}$} \\
           0 & \hbox{$\partial\Omega\times A$}
         \end{array}
       \right.
       $$
  Let  $\chi$ be a   smooth function with  compact  support defined  in  $[0,1]$ by $\chi(t)=1$ near of  $0$ and by $\chi(t)=0$ near of  $1$. We put  $$\tilde{\chi}(\zeta)=\chi(\log|\zeta|))$$
  is a smooth function in $\bar{A}$. We have $\tilde {\chi}(\zeta)=1$\;near of $|\zeta|=1$ and  $\tilde {\chi}(\zeta)=0$\; near of  $|\zeta|=e$.\\
  We consider the following function:
  $$ F(z,\zeta)=\tilde{\chi}(\zeta)\tilde{\varphi}_0(z,\zeta)+(1-\tilde{\chi}(\zeta))\tilde{\varphi}_1(z,\zeta)+B\alpha(\zeta),$$
 where $\tilde{\varphi}_0(z,\zeta)=\varphi_0(z)$, $\tilde{\varphi}_1(z,\zeta)=\varphi_1(z)$, 
The  function $F$ satisfies   $$F|_{\Omega\times \partial A}=\left\{
                                                             \begin{array}{ll}
                                                               \varphi_0(z), & \hbox{$\Omega\times \{|\zeta|=1\}$} \\
                                                               \varphi_1(z), & \hbox{$\Omega\times \{|\zeta|=e\}$}\\
                                                           0,& \hbox{$\partial {\Omega}\times A$}
                                                             \end{array}
                                                           \right.$$
  The function $F$ is extension  plurisubharmonic  of the function $\Psi$ defined in  $\Omega \times \partial A$ to $\Omega \times A$. We can also extend the function $\Psi$ defined in $\partial \Omega\times A$ by putting 
 $$F(z,\zeta)=D\rho(z),$$
 where $D$ is a big constant.
 
 On two cases the function $F$ satisfies the following properties  $$F\leq\Phi \;on \;\partial(\Omega\times A)\quad\hbox{and}\quad(dd^c_{z,\zeta}F)^{n+1}\geq (dd^c_{z,\zeta}\Phi)^{n+1} \;in \;\Omega\times A$$
  By maximum Principle we get
$$F(z, \zeta)\leq \Phi(z,\zeta)\quad\hbox{in}\quad\Omega\times A$$
Applying the same process to the boundary data $-\Psi$ we choose  $C^{1,1}$ function defined in $\Omega \times A$ such that $G=-\Psi $ on $\partial(\Omega\times A)$, the maximum Principle implies 
$$\Phi(z,\zeta)\leq -G(z,\zeta) \quad\hbox{in}\quad \Omega\times A$$
After the two  previous inequalities we have
$$
F(z,\zeta)\leq \Phi(z,\zeta)\leq -G(z,\zeta)\quad\hbox{in}\quad \Omega\times A
$$
Since $F(.,\zeta)\leq \Phi(,\zeta)$ in $\Omega $, the envelope $\Phi(,\zeta)$ can be extended respectively to variable $z$ as a plurisubharmonic  function in $\Omega'$ by setting $\Phi(,\zeta)=F(,\zeta)$ in $\Omega'\setminus \Omega$ with $\zeta$ fixed in $A$. Fix $\delta>0$ so small that $z+h\in \Omega$ whenever $z\in \bar{\Omega}$ and $||h||<\delta$, this set noted in sequel  by $\Omega_h$. Fix $h\in  \mathbb{C}^n$ such that  $||h||<\delta$. Recall that $F$ and $G$ are Lipschitz in each variable, thus 
$$|F(z+h,\zeta)-F(z,\zeta)|\leq C||h||\;and \;|G(z+h,\zeta)-G(z,\zeta)|\leq C||h||$$
for any $z\in \bar{\Omega}$ and $\zeta \in \bar{A}$.\\
Observe that the function  $v(z,\zeta):=\Phi(z+h,\zeta)-C||h||$ is well defined psh in the open set $\Omega\times A$. If $z\in\partial\Omega\cap\Omega_h$ and $\zeta\in A$, then
$$v(z,\zeta)=\Phi(z+h,\zeta)-C||h||\leq -G(z+h,\zeta)-C||h|| \leq -G(z,\zeta)=\Psi(z,\zeta).$$
 If $z\in\Omega\cap\partial \Omega_h$ and $\zeta\in A$, then
 $$v(z,\zeta)=\Phi(z+h,\zeta)-C||h||\leq F(z+h,\zeta)-C||h|| \leq F(z,\zeta)\leq\Phi(z,\zeta).$$
This shows that the function $w$ defined by
$$w(z,\zeta):=
\left\{
\begin{array}{ll}
  \max(v(z,\zeta),\Phi(z,\zeta)) & \hbox{if $(z,\zeta)\in \Omega\cap \Omega_h \times A $} \\
  \Phi(z,\zeta)  & \hbox{if $(z,\zeta)\in \Omega\setminus \Omega_h \times A $}
   \end{array}
    \right.
   $$          
 is plurisubharmonic in $\Omega\times A$. Since $w\leq \Psi$ on $\partial(\Omega\times A)$ we get $w\leq\Phi$                       in $\Omega\times A$, hence $v\leq \Phi$ in $\Omega\times A$. We have shown that 
 $$\Phi(z+h,\zeta)-\Phi(z,\zeta)\leq C ||h||$$
 whenever $z\in \Omega\cap \Omega_h$ , $||h|| \leq \delta$ and  $\zeta \in A$. Replacing $h$ by $-h$ shows that 
 $$|\Phi(z+h,\zeta)-\Phi(z,\zeta)|\leq C ||h||$$
 Which proves that $\Phi(, \zeta)$ is Lipschitz in every $z\in\bar{\Omega}$.
\end{proof}
 \section{Case of the unit ball}
 In this section we shall show how to use the proof of Bedford and Taylor \cite{BT76}, which is simplified by Demailly \cite{Dem93} in the unit ball for giving the regularity in space variable for our geodesics problem. We need some preparation for prove this regularity. The open  subset $\B:=\{z\in \C^n\; /\; |z_1|^2+|z_2|^2.....+|z_n|^2 < 1\}$ of $\C^n$ is called the unit ball. First we shall define the Mobius transformation of the unit ball. Let $a\in \B\setminus \{0\}\subset \C^n$. Denote the orthogonal projection onto the subspace of in  $\C^n$ generated
by the vector $a$ by, $$P_a(z):=\frac{<z,a>a }{||a||^2}.$$
The Mobius transformation associated with a is the
mapping 
$$T_a(z):=\frac{P_a(z)-a +\sqrt{(1-||a||^2)}(z-P_a(z))}{1-<z,a>}$$
With $<z,a>=\sum_{i=1}^n z_i \bar{a}_i$ denote the hermitian scalar product of $z$ and $a$. 
 For every $a\in B $, the Mobius transformation has the following properties\\
i)\; $T_a(0)=-a$ and $T_0(a)=0$.\\
ii)an elementary computation yields 
\begin{equation}\label{auto}T_a(z)=z-a+<z,a>a+O(||a||^2)=z-h+O(||a||^2),
\end{equation}
with  $h=h(a,z):=a-<z,a>z$ and  $O(||a||^2)$ is uniformly with respect of  $z\in \mathbb{\bar{B}}$.\\
We need in the sequel the following useful lemma for giving the regularity in unit ball.
\begin{lem}Let $u$ be a  plurisubharmonic function in domain  $\Omega\subset\subset \C^n$, assume that there exists $B, \delta>0$ such that
$$u(z+h)+u(z-h)-2u(z)\leq B||h||^2,\;\; \forall 0<||h||<\delta$$
and for all $z\in \Omega$ and $dist(z,\Omega)>\delta$. Then $u$ is $C^{1,1}$-smooth , ant its second derivative, which exists almost everywhere, satisfies $$||D^2u||_{L^{\infty}(\Omega)}\leq B.$$\end{lem}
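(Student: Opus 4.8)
The plan is to read the hypothesis as a quantitative concavity statement and combine it with plurisubharmonicity to pin down the real Hessian of $u$ from both sides. First I would set $w := u - \tfrac{B}{2}\|z\|^2$, where $\|z\|^2 = |z_1|^2 + \dots + |z_n|^2$. By the parallelogram identity $\|z+h\|^2 + \|z-h\|^2 = 2\|z\|^2 + 2\|h\|^2$ the assumption rewrites as
$$ w(z+h) + w(z-h) - 2w(z) = \big(u(z+h) + u(z-h) - 2u(z)\big) - B\|h\|^2 \le 0 $$
for $0 < \|h\| < \delta$ and $\mathrm{dist}(z,\partial\Omega) > \delta$; that is, $w$ is midpoint-concave along every sufficiently short segment. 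Since $u$ is plurisubharmonic it is locally bounded above and $L^1_{\mathrm{loc}}$, hence so is $w$, and a locally bounded above, midpoint-concave function is concave (Bernstein--Doetsch). Thus $w$ is concave on every ball $B(a,r)$ with $B(a,r)\subset\{\mathrm{dist}(\cdot,\partial\Omega)>\delta\}$ and $r<\delta/2$; in particular $w$, and therefore $u$, is locally Lipschitz, and $D^2 w \le 0$ in the sense of distributions, which gives the one-sided bound $D^2 u \le B\,\mathrm{Id}$.

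For the matching lower bound I would use plurisubharmonicity of $u$, which is equivalent to the distributional inequality $D^2 u + J^{t}(D^2 u)\,J \ge 0$, where $J$ denotes the standard complex structure on $\mathbb{R}^{2n} \cong \mathbb{C}^n$ (this is just the positivity of the complex Hessian $(u_{j\bar k})$ rewritten in real coordinates). Since $J$ is orthogonal, $J^{t}(D^2 u)J \le J^{t}(B\,\mathrm{Id})J = B\,\mathrm{Id}$, whence $D^2 u \ge -J^{t}(D^2 u)J \ge -B\,\mathrm{Id}$. Combining the two parts yields $-B\,\mathrm{Id} \le D^2 u \le B\,\mathrm{Id}$ distributionally.

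Finally I would upgrade this to genuine $C^{1,1}$ regularity: setting $v := u + \tfrac{B}{2}\|z\|^2$, the preceding estimate says $D^2 v \ge 0$ and $D^2 v \le 2B\,\mathrm{Id}$, so that $v$ and $B\|z\|^2 - v$ are both convex; a convex function whose distributional Hessian is bounded above lies in $W^{2,\infty}_{\mathrm{loc}} = C^{1,1}_{\mathrm{loc}}$, its Hessian exists almost everywhere (Alexandrov) and coincides with the absolutely continuous part of the distributional one, so $0 \le D^2 v \le 2B\,\mathrm{Id}$ a.e. Hence $u = v - \tfrac{B}{2}\|z\|^2 \in C^{1,1}_{\mathrm{loc}}$ with $-B\,\mathrm{Id}\le D^2 u\le B\,\mathrm{Id}$ a.e., i.e. $\|D^2 u\|_{L^\infty} \le B$.

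I expect the only genuinely delicate point to be the handling of these inequalities at the level of distributions (the equivalence plurisubharmonic $\Leftrightarrow$ $D^2u + J^t D^2u\,J \ge 0$, and the passage from a distributional Hessian bound to a pointwise one). The clean way to deal with it is to run the whole argument first on the standard smoothings $u_\varepsilon = u * \chi_\varepsilon$, which are smooth, plurisubharmonic, and satisfy the same second-difference inequality with the same constant $B$ on a slightly smaller domain (convolution with a nonnegative kernel preserves it); then $-B\,\mathrm{Id} \le D^2 u_\varepsilon \le B\,\mathrm{Id}$ holds pointwise, the family $(u_\varepsilon)$ is bounded in $C^{1,1}$ on compact subsets, and letting $\varepsilon \to 0$ gives the conclusion for $u$.
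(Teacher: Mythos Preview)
Your proposal is correct and follows essentially the same route as the paper: both derive the upper bound $D^2u \le B\,\mathrm{Id}$ from the second-difference hypothesis and then the matching lower bound from plurisubharmonicity via the identity $D^2u\cdot h^2 + D^2u\cdot(ih)^2 \ge 0$ (your $D^2u + J^{t}(D^2u)J \ge 0$), with mollification used to make the estimates pointwise and then passing to the limit. The only cosmetic difference is that you first phrase the upper bound through concavity of $u-\tfrac{B}{2}\|z\|^2$ and Bernstein--Doetsch, while the paper applies Taylor's formula directly to $u_\varepsilon=u*\chi_\varepsilon$; your final paragraph is precisely the paper's argument.
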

\begin{proof}
Let  $u_{\varepsilon}=u\ast\chi_{\varepsilon}$  denote the standard  regularization of  $u$ defined  in $\Omega_{\varepsilon}=\{z \in \Omega \; / dist(\partial \Omega,z)> \varepsilon \}$ for $0<\varepsilon<<1$. Fix $\delta>0$ small enough  and  $0<\varepsilon<\frac{\delta}{2}$. Then  for $ 0<||h||<\frac{\delta}{2}$ we have 
\begin{eqnarray}
u_{\varepsilon}(z+h)+u_{\varepsilon}(z-h)-2u_{\varepsilon}(z)\leq B||h||^2
\end{eqnarray}
It follows from Taylors formula that if $z\in \Omega_{\epsilon}$
$$\frac{d^2}{dt^2}u_{\varepsilon}(z+th)|_{t=0}:=\lim_{t\to 0^+}\frac{u_{\varepsilon}(z-th)+u_{\varepsilon}(z+th)-2u_{\varepsilon}(z)}{t^2}$$
therefore by  having
$D^2u_{\varepsilon}(z).h^2\leq B||h||^2$ for all  $z\in \Omega_{\varepsilon}$ and $h\in \mathbb{C}^n$. Now for  $z\in \Omega_{\varepsilon}$
$$D^2u_{\varepsilon}(z).h^2=\sum_{i,j=1}^n\left( \frac{\partial^2 u_{\varepsilon}}{\partial z_i\partial z_j}h_ih_{j}+2\frac{\partial^2 u_{\varepsilon}}{\partial z_i\partial\bar{z}_j}h_i\bar{h}_{j}+\frac{\partial^2 u_{\varepsilon}}{\partial \bar{z}_i\partial \bar{z}_j}\bar{h}_{i}\bar{h}_{j}\right)$$
Recall that $u_{\epsilon}$ is plurisubharmonic in  $\Omega_{\varepsilon}$ hence
$$D^2u_{\varepsilon}(z).h^2+D^2u_{\varepsilon}(z).(ih)^2=4\sum_{i,j=1}^n\frac{\partial^2 u_{\varepsilon}}{\partial z_i\partial\bar{z}_j}h_i\bar{h}_{j}\geq 0.$$
The above upper-bound also yields a lower-bound of  $D^2u_{\varepsilon}$
$$D^2u_{\varepsilon}.h^2\geq -D^2u_{\varepsilon}.(ih)^{2}\geq -B||h||^2 $$
For any  $z\in \Omega$ and  $h\in \mathbb{C}^n$. This implies that
$$||D^2u_{\varepsilon}||_{L^{\infty}(\Omega)}\leq B$$
Thus, we have shown that $D u_{\varepsilon}$ is uniformly Lipschitz in  $\Omega_{\varepsilon}$. We infer that $Du$ is Lipschitz in  $\Omega$ and $Du_{\varepsilon}\longrightarrow Du$  uniformly in  compact subsets of $\Omega$. Since the dual of  $L^1$ is $L^{\infty}$, it follows from the Alaoglu-Banach theorem that, up to extracting a subsequence,there exists a bounded function $V$ such that  $D^2u_{\varepsilon} \longrightarrow V$  weakly in  $L^{\infty}$. Now $D^2u_{\varepsilon}\longrightarrow D^2 u$ in the sense of distributions hence $V=D^2u$. Therefore  $u$ is  $C^{1,1}$ in $\Omega$ and  its second-order derivative
exists almost everywhere with $||D^2u||_{L^{\infty}(\Omega)}\leq B$.
\end{proof}

\begin{thm}Let $\mathbb{B}$ be the  unit ball in $\mathbb{C}^n$. Let $\varphi_0$ and  $\varphi_1$ be the end geodesic  points which are $C^{1,1}$. Then the Perron-Bremermann envelope
$$\Phi(z,\zeta)=\sup\{u(z,\zeta)/ u \in \mathcal{F}(\Omega\times A,\Psi)\}$$
admits second-order partial derivates almost everywhere with respect to variable $z\in\B$ which  locally bounded uniformly with respect to $\zeta \in A$  , i.e  for any compact subset  $K\subset \mathbb{B}$ there exists  $C$ which depend on $K,\varphi_{0}$ and  $\varphi_{1}$ such that   $$\Vert D^2_z\Phi\Vert_{ L^{\infty}(K\times A)}\leq C.$$
\end{thm}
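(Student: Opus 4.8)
The plan is to reduce the bound on $D^2_z\Phi$ to the elementary smoothing lemma just proven, by establishing the inequality
$$\Phi(z+h,\zeta)+\Phi(z-h,\zeta)-2\Phi(z,\zeta)\le C\|h\|^2$$
for all $z$ in a fixed compact $K\subset\mathbb B$, all $\zeta\in A$ and all small $h$. The trick, following Bedford--Taylor and Demailly, is that translations $z\mapsto z+h$ are not automorphisms of $\mathbb B$, but the M\"obius maps $T_a$ are, and by \eqref{auto} one has $T_a(z)=z-h(a,z)+O(\|a\|^2)$ with $h=h(a,z)=a-\langle z,a\rangle z$. So one should compare $\Phi(z,\zeta)$ with the average of $\Phi(T_{\pm a}(z),\zeta)$ (suitably corrected) rather than with a naive translate.

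First I would fix $a\in\mathbb B\setminus\{0\}$ small and form, for each $\zeta$ fixed, the function $z\mapsto \frac12\bigl(\Phi(T_a(z),\zeta)+\Phi(T_{-a}(z),\zeta)\bigr)$, which is plurisubharmonic in $z$ on $\mathbb B$ since $T_{\pm a}$ are holomorphic automorphisms of $\mathbb B$; crucially, as $T_{\pm a}$ do not touch the $\zeta$ variable, this function is still plurisubharmonic in $(z,\zeta)$ on $\mathbb B\times A$. Next I would estimate the boundary values: on $\partial\mathbb B\times A$ the automorphisms preserve $\partial\mathbb B$, and using the explicit $C^{1,1}$ boundary extensions $F$ (from below) and $-G$ (from above) built in the proof of Theorem \ref{lip}, together with the second-order Taylor expansion \eqref{auto}, one gets
$$\tfrac12\bigl(\Phi(T_a(z),\zeta)+\Phi(T_{-a}(z),\zeta)\bigr)\le \Psi(z,\zeta)+C\|a\|^2\qquad\text{on }\partial(\mathbb B\times A);$$
here the $\|a\|^2$ absorbs both the error term in \eqref{auto} and the quadratic defect of the $C^{1,1}$ data. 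On the two caps $\mathbb B\times\{|\zeta|=1\}$ and $\mathbb B\times\{|\zeta|=e\}$ one uses the $C^{1,1}$ bound on $\varphi_0,\varphi_1$ directly; on $\partial\mathbb B\times A$ the defining function $\rho$ and the Lipschitz-in-$z$ estimate from Theorem \ref{lip} do the job. Then subtracting a constant $C\|a\|^2$ produces a competitor in $\mathcal F(\Omega\times A,\Psi)$, so by maximality of $\Phi$ one obtains $\tfrac12(\Phi(T_a(z),\zeta)+\Phi(T_{-a}(z),\zeta))-C\|a\|^2\le \Phi(z,\zeta)$ on all of $\mathbb B\times A$. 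Finally, translating $T_{\pm a}(z)=z\mp h+O(\|a\|^2)$ back into the displayed inequality, combining with the Lipschitz-in-$z$ bound on $\Phi$ (Theorem \ref{lip}) to dispose of the $O(\|a\|^2)$ errors, and localizing to a compact $K$ where $\|a\|\sim\|h\|$ uniformly, yields $\Phi(z+h,\zeta)+\Phi(z-h,\zeta)-2\Phi(z,\zeta)\le C_K\|h\|^2$ with $C_K$ depending only on $K,\varphi_0,\varphi_1$.

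With this key estimate in hand, I would apply the smoothing lemma above to $u=\Phi(\cdot,\zeta)$ on the fixed domain $K$: it gives that $\Phi(\cdot,\zeta)$ is $C^{1,1}$ in $z$ with $\|D^2_z\Phi(\cdot,\zeta)\|_{L^\infty(K)}\le C_K$, and since $C_K$ is independent of $\zeta\in A$, we conclude $\|D^2_z\Phi\|_{L^\infty(K\times A)}\le C_K$, which is the assertion. The main obstacle I anticipate is the careful bookkeeping of the boundary estimate: one must check that the quadratic error in \eqref{auto} is genuinely uniform in $z\in\overline{\mathbb B}$, that the $C^{1,1}$ extensions $F,-G$ interact correctly with the M\"obius maps near $\partial\mathbb B$ (where $T_a$ is only a diffeomorphism, not a translation), and that the $O(\|a\|^2)$ terms coming from \eqref{auto} can all be controlled \emph{linearly} using only Lipschitz regularity of $\Phi$ while still landing a \emph{quadratic} bound overall --- this is exactly the place where working with automorphisms instead of translations is essential and where the argument is delicate.
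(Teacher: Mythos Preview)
Your proposal is correct and follows essentially the same route as the paper: the Bedford--Taylor/Demailly trick of replacing translations by M\"obius automorphisms $T_{\pm a}$, a boundary comparison exploiting that $T_{\pm a}(\partial\mathbb B)=\partial\mathbb B$ and the $C^{1,1}$ regularity of $\varphi_0,\varphi_1$, conversion of the $O(\|a\|^2)$ error into an $O(\|h\|^2)$ error via the Lipschitz estimate of Theorem~\ref{lip}, and then the smoothing lemma. The only cosmetic difference is that the paper concludes the interior inequality by the comparison principle (using $(dd^c_{z,\zeta}\Phi)^{n+1}=0$ from Theorem~\ref{zero}) rather than by your ``competitor in $\mathcal F$'' argument, but the two are equivalent here; also note that on $\partial\mathbb B\times A$ the estimate is in fact immediate since $\Phi(T_{\pm a}(z),\zeta)=0$ there, so no appeal to the barriers $F,-G$ is needed on that piece of the boundary.
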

\begin{proof}
 For proving the theorem, we weed to prove the following inequality
$$\Phi(z+h,\zeta)+\Phi(z-h,\zeta)-2\Phi(z,\zeta)\leq A||h||^2,$$
for any  $||h ||<<1$ , $z\in\mathbb{B}$ and $\zeta \in A$.\\
The idea is to study the boundary behavior of the plurisubharmonic  function $(z,\zeta)\longmapsto \frac{1}{2}(\Phi(z+h,\zeta)+\Phi(z-h,\zeta)$ in order to compare it with the function $\Phi$ in $\B\times A$.
This does not make sense since the translations do not preserve the
boundary. We are instead going to move point $z$ by automorphisms of the unit
ball: the group of holomorphic automorphisms of the latter acts transitively
on it and this is the main reason why we prove this result for the unit ball
rather than for a general strictly pseudoconvex domain (which has generically
few such automorphisms).\\
By the fact $\Phi$ is Lipschitz with respectively to   $z$ variable (theorem \ref{lip}) and expansion (\ref{auto}) we have $$|\Phi(T_a(z),\zeta)-\Phi(z-h,\zeta)|\leq C||T_a(z)-(z-h)||\leq C||a||^2$$
and $$|\Phi(T_{-a}(z),\zeta)-\Phi(z+h,\zeta)|\leq C||T_{-a}(z)-(z+h)||\leq C||a||^2$$
which  implies 
$$\Phi(z+h,\zeta)+\Phi(z-h,\zeta)\leq \Phi(T_a(z),\zeta)+\Phi(T_{-a}(z),\zeta)+2C||a||^2.$$
We consider the following functions:
$$F(z,\zeta):=\Phi(T_a(z),\zeta)+\Phi(T_{-a}(z),\zeta)+2C||a||^2,$$
and $G(z,\zeta)=2\Phi(z,\zeta)+D||a||^2$,
we Observe that the functions  $F$ and $G$ are  well defined in  $\mathbb{B}\times A$ and plurisubharmonic  in $\mathbb{B}\times A$. We need to show that 
$$F(z,\zeta)\leq G(z,\zeta)\quad\hbox{in}\quad\mathbb{B}\times A.$$
For showing the last inequality  we will apply the maximum  Principle, then we need to prove $$F(z,\zeta)\leq G(z,\zeta)\quad\hbox{on}\quad \partial(\mathbb{B}\times A)$$ and
$$(dd^c_{z,\zeta}F(z,\zeta))^{n+1}\geq (dd^c_{z,\zeta}G(z,\zeta))^{n+1} \quad\hbox{in}\quad\mathbb{B}\times A$$
The last  inequality is easy follows from the fact that $F$ is a plurisubharmonic  and $(dd^c_{z,\zeta}\Phi)^{n+1}=0$ in $\mathbb{B}\times A$ by (theorem \ref{zero}).\\
 We need to compare $F$ and $G$ in the boundary  of $\mathbb{B}\times A$. Indeed, since  $\partial(\mathbb{B}\times A)=(\partial\mathbb{B}\times \bar{A})\cup(\bar{\mathbb{B}}\times \partial{A})$, then we will compare in two parts, we begin by the part  $\partial\mathbb{B}\times \bar{A}$, in this part we get
$$F|_{\partial{B}\times \bar{A}}=2C||a||^2 \;\;and  \;\;G|_{\partial\mathbb{B}\times \bar{A}}=D||a|^2.$$
For shows that  $F|_{\partial\mathbb{B}\times \bar{A}}\leq G_{\partial\mathbb{B}\times \bar{A}}$, we take just $$D=2C.$$
For the second part  $\bar{\mathbb{B}}\times \partial{A}$, We compare  in $\B\times \bar{A}$ only ,because $\partial \B\times \bar{A}$ belongs to the previous part, since $\partial{A}=\{|\zeta|=1\}\cup \{|\zeta|=e\}$, we begin this part by comparing in case $\B\times  \{|\zeta|=1\}$, we have 
$$F|_{\B\times \{|\zeta|=1\}}=\varphi_0(T_a(z))+\varphi_0(T_{-a}(z))+2C||a||^2$$
and $$G|_{\B\times \{|\zeta|=1\}}=2\varphi_0(z)+D||a||^2$$
We apply Taylor expansion and we get
$$\varphi_0(T_a(z))=\varphi_0(z-h+O(|a|^2)=\varphi_0(z)-d\varphi(z).h+O(|a|^2)$$
and
$$\varphi_0(T_{-a}(z))=\varphi_0(z+h+O(|a|^2)=\varphi_0(z)+d\varphi(z).h+O(|a|^2)$$
Which is implies 
  $$\varphi_0(T_a(z))+ \varphi_0(T_{-a}(z))\leq 2\varphi_0(z)+2C_0|a|^2$$
  where $C_0$ depend only on the $\varphi_0$
  then $$F(z,\zeta) \leq 2\varphi_0(z)+2C_1|a|^2+2C|a|^2$$
  If we take $D=2(C_0+C)$, we get 
 $ F(z,\zeta) \leq G(z,\zeta)$ on $\B\times \{|\zeta|=1\}$. By same methods we get $F(z,\zeta) \leq G(z,\zeta)$ on $\B\times \{|\zeta|=1\}$ for $D=2(C_1+C)$, where $C_1$ depend only on the $\varphi_1$ which concludes  the second part.\\
 By part one and two we infer 
 $$F(z,\zeta)\leq G(z,\zeta)\quad \hbox{in}\quad \partial(\mathbb{B}\times A)$$
From the maximum Principle we get
$$F(z,\zeta)\leq G(z,\zeta)\quad\hbox{in}\quad \mathbb{B}\times A$$
Which is implies 
\begin{eqnarray*}
\Phi(z+h,\zeta)+\Phi(z-h,\zeta)-2\Phi(z,\zeta)&\leq&\Phi(T_a(z),\zeta)+\Phi(T_{-a}(z),\zeta)\\
&&+2C||a||^2-2\Phi(z,\zeta)\\
&\leq& \Phi(T_a(z),\zeta)+\Phi(T_{-a}(z),\zeta)\\&&+2C||a||^2-2\Phi(z,\zeta)\\
&\leq& D||a||^2
\end{eqnarray*}
Observe that the mapping $a\longmapsto h(a,z)=a-<z,a>z$ is a local diffeomorphism in 
neighborhood of the origin as long as $||z||<1$, which  depend on $z\in \mathbb{B}$ smoothly and its inverse $h\longmapsto a(h,z)$ is linear  with a norm less than or equal to $\frac{1}{1-||z||^2}$ since
$$||h||\geq ||a||-||a||||z||^2=||a||(1-||z||^2)$$
which gives
$$\Phi(z+h, \zeta)+\Phi(z-h,\zeta)-2\Phi(z,\zeta)\leq \frac{D ||h||^2}{(1-||z||^2)^2}$$
Fix a compact set $K\subset \mathbb{B}$ compact, there exists  $\delta>0$ such that  $\forall z\in K$ and  $\forall 0<||h||<\delta $ we have
$$\Phi(z+h, \zeta)+\Phi(z-h,\zeta)-2\Phi(z,\zeta)\leq \frac{D||h||^2}{dist(K,\partial\mathbb{B})^2}$$
after the previous lemma we get
$$||D^2_z\Phi||_{L^{\infty}(K\times A)}\leq D$$
with  $C= \frac{D}{dist(K,\partial\mathbb{B})^2}$.
\end{proof}

\section{Moser-Trudinger inequalities}
 In this section we assume $\Omega$ is pseudoconvex circled domain.
   We try to solve the complex Monge-Amp\`ere equation
  \begin{equation} 
  (dd^c \f_t)^n=\frac{e^{-t \f_t} \mu}{\int_{\Omega} e^{-t \f_t} d\mu }
  \end{equation}
  with $\f_t$ smooth and plurisubharmonic, ${\f_t}_{|\partial \Omega}=0$ and $\mu$ just the Lebesgue  normalised so that  $\mu(\Omega)=1$.
  We know that
\begin{itemize}
\item We can solve this equation if $t$ is not too large ($t=1$ is treated in \cite{GKY13}
and even $t< (2n)^{1+1/n}(1 + 1/n)^{(1+1/n)}$).
\item One can not solve the equation if $t$ is too large, cf \cite[section 6.2]{GKY13} and \cite{BB11}.
\end{itemize}
 We denote by
$$E(\varphi):={1\over n+1}\int_{\Omega}\varphi (dd^c\varphi)^n$$
the Monge-Amp\`ere  energy functional of a plurisubharmonic function $\varphi$,  which is defined as the  primitive of Monge-Amp\`ere operator. The expression 
$$\mathcal{F}_t(\varphi):=E(\varphi)+{1\over t}\log\left[\int_{\Omega}e^{-t\varphi}d\mu\right]$$
defines the Ding functional.
\begin{defi}
 We say  the functional $\mathcal{F}_t$ is coercive , if there exist $\varepsilon>0$ and $B>0$ such that :
$$\mathcal{F}_t(\varphi)\leq \varepsilon E(\varphi)+B \; \;\;\;\forall \varphi\in \mathcal{H}$$
\end{defi}
\begin{defi}
Set $\Phi_s(z)=\Phi(z,e^s)$. The continuous family $(\Phi_s)_{0\leq s\leq 1}$ is called the
geodesic joining $\varphi_0$ and $\varphi_1$.
\end{defi}
We show that $E$ is linear along of geodesics, this result is in \cite[lemma 22]{GKY13}, and was proven by Rashkovskii \cite{Rash16} in the Cegrell class, we reprove it for continuous geodesics for convenience of the reader. 
\begin{lem}\label{21}Let $(\Phi_s)_{0\leq s\leq 1}$ be a continuous geodesic. Then $s\longmapsto E(\Phi_s)$ is affine.
\end{lem}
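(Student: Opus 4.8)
The plan is to show that $f(s):=E(\Phi_s)$ has vanishing second derivative in the sense of distributions on $(0,1)$; since $\Phi$ is continuous up to $\partial(\Omega\times A)$ by Theorem \ref{zero}, $f$ is continuous on $[0,1]$, and this forces $f$ to be affine. The formal identity behind this is classical, as in \cite{GKY13} and \cite{Rash16}: along a smooth path $(\varphi_s)$ in $\mathcal{H}$ one has $\frac{d}{ds}E(\varphi_s)=\int_\Omega\dot\varphi_s\,(dd^c\varphi_s)^n$, and one more differentiation followed by an integration by parts exploiting $\dot\varphi_s|_{\partial\Omega}=0$ gives
\begin{equation*}
\frac{d^2}{ds^2}E(\varphi_s)=\int_\Omega\Big(\ddot\varphi_s\,(dd^c\varphi_s)^n-n\,d\dot\varphi_s\wedge d^c\dot\varphi_s\wedge(dd^c\varphi_s)^{n-1}\Big).
\end{equation*}
The integrand is, up to a positive multiplicative constant, precisely the coefficient of $d\zeta\wedge d\bar\zeta$ in the expansion of $(dd^c_{z,\zeta}\Phi)^{n+1}$ computed in the proof of Theorem \ref{geo}. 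Since $(dd^c_{z,\zeta}\Phi)^{n+1}=0$ in $\Omega\times A$ by Theorem \ref{zero}(iii), the integrand vanishes and $f''\equiv0$; the whole difficulty is to make this legitimate for a merely continuous geodesic.

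For that I would rely only on the regularity already established. By Theorem \ref{lip} and the Lipschitz estimate in the time variable, each $\Phi_s$ is continuous on $\bar\Omega$ with $\Phi_s|_{\partial\Omega}=0$, the family is uniformly Lipschitz in $z$ and in $s=\log|\zeta|$, and $s\mapsto\Phi_s(z)$ is convex. Hence $\dot\Phi_s$ exists for a.e.\ $(z,s)$, is bounded, and is uniformly Lipschitz in $z$, so the measure $d_z\dot\Phi_s\wedge d_z^c\dot\Phi_s\wedge(dd^c\Phi_s)^{n-1}$ is well defined; $(dd^c\Phi_s)^n$ is a Bedford--Taylor measure of uniformly bounded mass, weakly continuous in $s$; and, by the telescoping identity $\int_\Omega\varphi(dd^c\varphi)^n-\int_\Omega\psi(dd^c\psi)^n=\sum_{j=0}^n\int_\Omega(\varphi-\psi)(dd^c\varphi)^j\wedge(dd^c\psi)^{n-j}$ for bounded psh functions vanishing on $\partial\Omega$, $f$ is locally Lipschitz on $(0,1)$. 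A first step is then to check, by polarizing $s\mapsto\int_\Omega\Phi_s(dd^c\Phi_s)^n$ and using the Bedford--Taylor integration by parts (valid for continuous psh functions vanishing on $\partial\Omega$), that $f'(s)=\int_\Omega\dot\Phi_s\,(dd^c\Phi_s)^n$ for a.e.\ $s$.

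Rather than differentiate once more --- awkward since $\ddot\Phi_s$ is only a measure in $s$ --- I would test against $\chi\in C_c^\infty(0,1)$ and use the $S^1$-invariance of $\Phi$. Writing $\tilde\chi(\zeta):=\chi(\log|\zeta|)$, which is smooth and compactly supported in the annulus $A$, a routine change of variables gives
\begin{equation*}
\int_0^1 f(s)\,\chi''(s)\,ds = c_n\int_{\Omega\times A}\Phi\,(dd^c_z\Phi)^n\wedge dd^c_\zeta\tilde\chi
\end{equation*}
for a constant $c_n>0$, using nothing beyond $f\in C^0$ and Fubini over the $\Omega$-fibre. Then, expanding $(dd^c_{z,\zeta}\Phi)^{n+1}$ as in the lemma preceding Theorem \ref{geo} and performing the corresponding integrations by parts in the $\zeta$-variables --- legitimate because $\tilde\chi$ is smooth and compactly supported in $A$ and because $\Phi=0$ on $\partial\Omega\times A$ --- one identifies the right-hand side with a constant multiple of $\int_{\Omega\times A}\tilde\chi\,(dd^c_{z,\zeta}\Phi)^{n+1}$, which vanishes by Theorem \ref{zero}(iii). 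Thus $\langle f'',\chi\rangle=0$ for all such $\chi$, so $f$ is affine. Equivalently, this argument shows that $\zeta\mapsto E(\Phi_{\log|\zeta|})$ is harmonic on $A$; being $S^1$-invariant, it is affine in $\log|\zeta|$.

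I expect the main obstacle to be exactly that last identification: turning $\int_{\Omega\times A}\Phi\,(dd^c_z\Phi)^n\wedge dd^c_\zeta\tilde\chi$ into a fibre integral of $(dd^c_{z,\zeta}\Phi)^{n+1}$ requires the multilinearity and the continuity along uniformly convergent sequences of the Bedford--Taylor products, careful control of which integrations by parts are allowed (only those not hitting $\partial\Omega$, where $\Phi$ vanishes but its normal derivative need not), and a check that the mixed terms $d_zd^c_\zeta\Phi$ and $d_\zeta d^c_z\Phi$ occurring in $(dd^c_{z,\zeta}\Phi)^{n+1}$ are kept under control by the joint Lipschitz regularity of $\Phi$. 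If one prefers to avoid currents on the product, a safe substitute is to approximate $\varphi_0,\varphi_1$ by smooth strictly plurisubharmonic data and $\Phi$ by the solution $\Phi_\varepsilon$ of $(dd^c_{z,\zeta}\Phi_\varepsilon)^{n+1}=\varepsilon\,dV$, for which the computation of the first paragraph is fully classical and yields $|f_\varepsilon''|=O(\varepsilon)$, and then to let $\varepsilon\to0$ using the continuity of $E$ along uniform convergence and the weak continuity of the complex Monge--Amp\`ere operator.
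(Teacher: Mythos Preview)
Your proposal is correct and follows essentially the same route as the paper: both identify the second derivative of $s\mapsto E(\Phi_s)$ (equivalently, $d_\zeta d^c_\zeta E(\Phi(\cdot,\zeta))$) with the fibre integral $\frac{1}{(n+1)^2}\int_\Omega (dd^c_{z,\zeta}\Phi)^{n+1}$ via the expansion from Theorem~\ref{geo}, conclude harmonicity in $\zeta$ from Theorem~\ref{zero}(iii), and then use $S^1$-invariance to get affinity in $s=\log|\zeta|$. The paper performs this computation tersely and somewhat formally, whereas you are more explicit about the regularity justifications (testing against $\chi\in C_c^\infty$, or approximating by $\Phi_\varepsilon$); this extra care is welcome but does not change the underlying argument.
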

\begin{proof} After the proof of theorem \ref{geo} we have 
{\footnotesize
\begin{eqnarray*}
 (dd^c_{z,\zeta}\Phi(z,\zeta))^{n+1}&=&(n+1)(dd^c_z\Phi(z,\zeta))^n\wedge R+ {n(n+1)\over 2}(dd^c_z\Phi(z,\zeta))^{n-1}\wedge R^2\\
 &=& (n+1)\left( d_{\zeta}d^c_{\zeta}\Phi\wedge(d_zd_z^c\Phi)^n-n d_z d^c_{\zeta}\Phi\wedge d_{\zeta} d_z^c\Phi\wedge (d_z d_z^c\Phi)^{n-1}\right)
\end{eqnarray*}
 }
We have by definition of $E$
$$E(\Phi(.,\zeta)=\frac{1}{n+1}\int_{\Omega}\Phi(z,\zeta)(d_z d_z^c\Phi(z,\zeta))^n$$
Which implies
$$d^c_{\zeta}E=\frac{1}{n+1}\int_{\Omega}d^c_{\zeta}\Phi\wedge (d_z d_z^c\Phi)^{n}$$
{\footnotesize
\begin{eqnarray*}
d_{\zeta}d^c_{\zeta}E(\Phi)&=&\frac{1}{n+1}\left(\int_{\Omega} d_{\zeta}d_{\zeta}^c\Phi\wedge (d_z d_z^c\Phi)^{n-1}+n\int_{\Omega}d^c_{\zeta}\Phi\wedge d_{\zeta}d_z d_z^c\Phi\wedge (d_z d_z^c\Phi)^{n}\right)\\
&=&\frac{1}{n+1}\left(\int_{\Omega}d_{\zeta}d^c_{\zeta}\Phi\wedge (d_z d_z^c\Phi)^{n-1}-n\int_{\Omega} d_z d^c_{\zeta}\Phi\wedge d_{\zeta} d_z^c\Phi\wedge (d_z d_z^c\Phi)^{n-1}\right)\\
&=&\frac{1}{(n+1)^2} \int_{\Omega}(dd^c_{z,\zeta}\Phi)^{n+1}
\end{eqnarray*}
}
where the second equality follows from Stokes theorem because $ d_{\zeta}\Phi=0$ on $\partial \Omega$, and the
last one be above calculation.\\
Thus, it follows from theorem \ref{zero} that $\zeta \in A\longmapsto E(\Phi(.,\zeta)\in \mathbb{R}$ is harmonic in $\zeta$. Since $\Phi$ is invariant by rotation with respect to $\zeta$, hence it is affine in $t=\log|\zeta|$.
\end{proof}
We recall here  \cite[proposition 23] {GKY13}.
\begin{prop}\label{33} Assume that $\Omega$ is circled, let  $\varphi_t$ be an  $S^1$-invariant solution of $(MA)_t$. Then
$$\mathcal{F}_t(\varphi_t)=\sup_{\psi \in I(\Omega)}\mathcal{F}_t({\psi}),$$
where $I(\Omega)$ denotes  all $S^1$-invariant plurisubharmonic functions $\psi$ in $\Omega$ which are continuous up to the boundary,
with zero boundary value.
\end{prop}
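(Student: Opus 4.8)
The plan is to prove the inequality ``$\le$'' (the inequality ``$\ge$'' being immediate, since $\varphi_t$ is a smooth, hence admissible, element of $I(\Omega)$) by a geodesic concavity argument. Fix $\psi\in I(\Omega)$. Apply Theorem~\ref{zero} to the boundary data equal to $\varphi_t$ on $\Omega\times\{|\zeta|=1\}$, to $\psi$ on $\Omega\times\{|\zeta|=e\}$, and to $0$ on $\partial\Omega\times A$: this data is continuous, so the Perron--Bremermann envelope $\Phi$ is a continuous plurisubharmonic function on $\Omega\times A$, continuous up to the boundary, with $(dd^c_{z,\zeta}\Phi)^{n+1}=0$ and the prescribed boundary values; by uniqueness of the Dirichlet problem together with the circled structure of $\Omega$, $\Phi$ is invariant under rotations in $\zeta$ and $S^1$-invariant in $z$. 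Writing $\Phi_s:=\Phi(\cdot,e^s)$ for $s\in[0,1]$ we obtain a continuous geodesic in $I(\Omega)$ with $\Phi_0=\varphi_t$ and $\Phi_1=\psi$; moreover the balayage argument used to prove Lipschitz regularity in $t$ applies verbatim (the competitor $\max(\varphi_t-A\log|\zeta|,\,\psi+A(\log|\zeta|-1))$ is admissible once $A\ge\sup_{\bar\Omega}|\varphi_t-\psi|$), so $s\mapsto\Phi_s(z)$, which is convex by subharmonicity of $\Phi$ in $\zeta$, is moreover uniformly Lipschitz on $[0,1]$.

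Set $g(s):=\mathcal{F}_t(\Phi_s)=E(\Phi_s)+\tfrac1t\log\int_\Omega e^{-t\Phi_s}\,d\mu$. By Lemma~\ref{21} the term $s\mapsto E(\Phi_s)$ is affine. For the second term, observe that $t\Phi$ is plurisubharmonic on the pseudoconvex domain $\Omega\times A$; by Berndtsson's theorem on the plurisubharmonic variation of fibrewise integrals (equivalently, the complex Pr\'ekopa inequality applied to the trivial line bundle), the function $\zeta\mapsto-\log\int_\Omega e^{-t\Phi(\cdot,\zeta)}\,d\mu$ is subharmonic on $A$, and, being rotation invariant, it is convex in $s=\log|\zeta|$. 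Hence $s\mapsto\tfrac1t\log\int_\Omega e^{-t\Phi_s}\,d\mu$ is concave, and therefore $g$ is concave on $[0,1]$.

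Next we show $g'(0^+)=0$. Since $s\mapsto\Phi_s(z)$ is convex and uniformly Lipschitz, the right derivative $\dot\Phi_0(z):=\lim_{s\to0^+}s^{-1}(\Phi_s(z)-\varphi_t(z))$ exists and is bounded on $\bar\Omega$. The energy computation carried out in the proof of Lemma~\ref{21} gives $\frac{d}{ds}\big|_{s=0^+}E(\Phi_s)=\int_\Omega\dot\Phi_0\,(dd^c\varphi_t)^n$, while dominated convergence (using the Lipschitz bound in $s$ and the boundedness of $\Phi$ on $\bar\Omega\times\bar A$) gives $\frac{d}{ds}\big|_{s=0^+}\tfrac1t\log\int_\Omega e^{-t\Phi_s}\,d\mu=-\frac{\int_\Omega\dot\Phi_0\,e^{-t\varphi_t}\,d\mu}{\int_\Omega e^{-t\varphi_t}\,d\mu}$. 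Because $\varphi_t$ solves $(MA)_t$, i.e. $(dd^c\varphi_t)^n=e^{-t\varphi_t}\mu/\int_\Omega e^{-t\varphi_t}\,d\mu$, the two contributions cancel and $g'(0^+)=0$.

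A concave function on $[0,1]$ with non-positive right derivative at the origin is non-increasing, so $g(1)\le g(0)$, i.e. $\mathcal{F}_t(\psi)\le\mathcal{F}_t(\varphi_t)$; as $\psi\in I(\Omega)$ was arbitrary, this proves the proposition. The crux of the argument is the concavity of the logarithmic term along the geodesic: this is genuinely stronger than the (easy) pointwise convexity of $s\mapsto\Phi_s$ and relies on Berndtsson-type positivity of direct images. A minor technical point is the validity of the energy-derivative formula along a merely continuous geodesic emanating from a smooth potential; this follows from the computation already present in the proof of Lemma~\ref{21} together with the Lipschitz control in $s$, and, if one prefers to remain in the smooth category, one may first approximate $\psi$ from above by smooth $S^1$-invariant strictly plurisubharmonic functions vanishing on $\partial\Omega$ and pass to the limit, using that both $E$ and $\psi\mapsto\log\int_\Omega e^{-t\psi}\,d\mu$ are continuous along such monotone approximations.
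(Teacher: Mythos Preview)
Your proof is correct and follows essentially the same route as the paper: connect $\varphi_t$ to $\psi$ by the Perron--Bremermann geodesic, use Lemma~\ref{21} for the affinity of $E$ and Berndtsson's subharmonicity result for the concavity of the logarithmic term, then compute the one-sided derivative of $\mathcal{F}_t(\Phi_s)$ at $s=0$ and use the equation $(MA)_t$ to see it vanishes. The only difference is in presentation: you spell out the Lipschitz-in-$s$ control and the dominated convergence argument needed to justify the derivative formula along a merely continuous geodesic, whereas the paper treats the smooth case by direct computation and, for the general case, simply refers to the method of \cite[Theorem~6.6]{BBGZ}.
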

\begin{proof}Let $(\Phi)_{0\leq s\leq 1}$ be a  geodesic joining $\Phi_0:=\varphi_t$ to $\Phi_1=\psi$. It follows from work of Berndtsson \cite{Bern06} that
$$s \longmapsto -{1\over t}\log\left(\int _{\Omega}e^{-t\Phi_s}d\mu\right)$$
is convex, since $ s\longmapsto E(\Phi_s)$ is affine from lemma \ref{21}. Then
$ s\longmapsto \mathcal{F}(\Phi_s)$ is concave.\\
therefore  it is sufficient to show that the derivative of $\mathcal{F}_t(\Phi_s)$ at $s=0$ is non-negative to conclude $\mathcal{F}_t(\varphi_t)=\mathcal{F}(\Phi_0)\geq\mathcal{F}_t(\Phi_s)$ for all s, in particular at $s=1$ where it yields $ \mathcal{F}_t(\varphi_t)\geq \mathcal{F}_t(\psi)$. When $\longmapsto\Phi_s$ is smooth, a direct computation yields, for $s = 0$,
$${d\over ds}\mathcal{F}_t(\Phi_s)=\int_{\Omega}\dot{\Phi}_s\left[(dd^c\Phi_s)^n-{e^{-t\Phi_s}\mu\over\int_{\Omega}e^{-t\Phi_s}d\mu}\right]=0$$
For the general case, the same method as in the proof of \cite[theorem 6.6]{BBGZ} applies. 
\end{proof}
\begin{lem}\label{3}
The Functional $\mathcal{F}_{t}$ is upper semi-continuous in $\mathcal{E}^1_{C}(\Omega)=\{\psi\in \mathcal{E}^1(\Omega)/\psi=0 \;on \;\partial \Omega\; and  \;E(\psi)\geq -C\}$.
\end{lem}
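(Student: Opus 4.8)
The plan is to write $\mathcal{F}_t=E+L_t$ with $L_t(\psi):=\tfrac1t\log\int_\Omega e^{-t\psi}\,d\mu$, to prove that $E$ is upper semi-continuous on $\mathcal{E}^1_C(\Omega)$ for the $L^1(\Omega)$-topology, that $L_t$ is in fact \emph{continuous} there, and then to conclude, a sum of an upper semi-continuous function and a continuous one being upper semi-continuous. Note first that every $\psi\in\mathcal{E}^1_C(\Omega)$ satisfies $\psi\le 0$ on $\Omega$ (maximum principle, since $\psi$ is plurisubharmonic with $\psi=0$ on $\partial\Omega$), so $e^{-t\psi}\ge 1$ and $\int_\Omega e^{-t\psi}\,d\mu\ge\mu(\Omega)=1$; in particular $\log$ is always applied to a number $\ge 1$ and $\mathcal{F}_t$ is finite on $\mathcal{E}^1_C(\Omega)$.

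For the upper semi-continuity of $E$, I would take $\psi_j\to\psi$ in $L^1(\Omega)$ with $\psi_j\in\mathcal{E}^1_C(\Omega)$ and set $\tilde\psi_j:=\big(\sup_{k\ge j}\psi_k\big)^{*}$, a decreasing sequence of plurisubharmonic functions with $\tilde\psi_j\searrow\psi$ and zero boundary values. Since $\psi_k\le\tilde\psi_j$ for $k\ge j$ and $E$ is non-decreasing, $-C\le E(\psi_k)\le E(\tilde\psi_j)\le 0$, so each $\tilde\psi_j\in\mathcal{E}^1(\Omega)$ with uniformly bounded energy, whence also $\psi\in\mathcal{E}^1_C(\Omega)$. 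By the continuity of the Monge-Amp\`ere energy along decreasing sequences (Bedford-Taylor \cite{BT76}, Cegrell \cite{Ceg98}) one has $E(\tilde\psi_j)\to E(\psi)$, and from $\psi_j\le\tilde\psi_j$ and monotonicity, $E(\psi_j)\le E(\tilde\psi_j)$; hence $\limsup_j E(\psi_j)\le E(\psi)$, which is the desired upper semi-continuity of $E$.

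For the continuity of $L_t$, with $\psi_j\to\psi$ as above pass to a subsequence so that $\psi_j\to\psi$ $\mu$-a.e., hence $e^{-t\psi_j}\to e^{-t\psi}$ $\mu$-a.e. It suffices to show $\int_\Omega e^{-t\psi_j}\,d\mu\to\int_\Omega e^{-t\psi}\,d\mu$, for then $L_t(\psi_j)\to L_t(\psi)$ by continuity of $\log$ (using $\int e^{-t\psi_j}\,d\mu\ge 1$). This follows from Vitali's convergence theorem once one knows that $\{e^{-t\psi_j}\}_j$ is uniformly integrable on $(\Omega,\mu)$, which in turn is guaranteed by a uniform exponential-moment bound $\sup_{\psi\in\mathcal{E}^1_C(\Omega)}\int_\Omega e^{-t'\psi}\,d\mu<+\infty$ for some $t'>t$ — a Moser-Trudinger/Skoda type estimate on the energy sublevel set, available from \cite{GKY13}. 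Combining the two parts gives that $\mathcal{F}_t=E+L_t$ is upper semi-continuous on $\mathcal{E}^1_C(\Omega)$. The main obstacle is precisely this uniform exponential integrability: without the uniform $t'$-moment bound, $L_t$ is only \emph{lower} semi-continuous (by Fatou) since the mass of $e^{-t\psi_j}$ could escape to infinity along $(\psi_j)$, and the statement would fail; one must therefore either quote the relevant Moser-Trudinger inequality from \cite{GKY13} (on which solvability of $(MA)_t$ below the stated threshold already rests) or derive the bound directly for $\mathcal{E}^1_C(\Omega)$ from Skoda's integrability theorem together with the energy bound $E(\psi)\ge -C$, the remaining ingredients being standard.
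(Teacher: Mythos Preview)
Your proof is correct and follows essentially the same strategy as the paper: split $\mathcal{F}_t=E+L_t$, invoke the upper semi-continuity of $E$ on $\mathcal{E}^1$, and prove that $L_t$ is in fact continuous via Skoda-type uniform integrability on the energy sublevel set. The only cosmetic differences are that the paper obtains continuity of $\psi\mapsto\int_\Omega e^{-t\psi}\,d\mu$ by combining the elementary inequality $|e^a-e^b|\le|a-b|\,e^{a+b}$ with Cauchy--Schwarz and Zeriahi's uniform Skoda bound \cite{Zer01} (rather than via Vitali's theorem), and that you supply a fuller argument for the upper semi-continuity of $E$, which the paper merely asserts.
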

\begin{proof}
Recall $\mathcal{F}_{t}(\psi)= E(\psi)+\frac{1}{t}\log(\int _{\Omega}e^{-t\psi}d\mu)$. The first term is upper semi-continuous in $\mathcal{E}^1(\Omega)$. For the second term we apply Skoda uniform integrability theorem\cite{Zer01}.\\ Assume without loss of generality that $t=1$. We need to check that 
$\psi\in\mathcal{E}_{C}^1(\Omega)\longmapsto \int_{\Omega} e^{-\psi}d\mu $ is upper semi-continuous.\\
Let $\psi_j$ be a sequence in  $\mathcal{E}_{C}^1(\Omega)$ converging to $\psi$ these functions have zero Lelong number. The following extension:\\\ $g_j=\psi_j+\psi$ to $\Omega\subset K\subset \Omega'$  as $\tilde{g}_{j}= g_j$ in $\Omega$, $\tilde{g}_{j}=0$ in $\Omega'\setminus \Omega$. We apply  Skoda's uniform  integrability estimates:
$$\int_{\Omega}e^{-2(\psi+\psi_j)}d\mu\leq \int_{K}e^{-2(\psi+\psi_j)} d\mu \leq C $$

\begin{eqnarray*}
|\int_{\Omega}e^{-\psi_j}d\mu-\int_{\Omega}e^{-\psi}d\mu|\leq \int_{\Omega}|\psi-\psi_j|e^{-(\psi_j+\psi)}d\mu\leq C||\psi_j-\psi||_{L^2(\mu)}.
\end{eqnarray*}
as follows from the Cauchy-Schwarz inequality and the elementary inequality
$$|e^{a}-e^{b}|\leq |a-b|e^{a+b},\; for\; all\; a,b\geq 0 $$
The conclusion follows since $(\psi_j)$ converges to $\psi$ in $L^2(\mu)$.
\end{proof}
We recall that the  Dirichlet problem $(MA)_t$ has a solution for $t=1$ by \cite{GKY13}, we moreover have  uniqueness if $\Omega$ is stricltly $\varphi$-convex( $\Omega$ is strictly convex dor the metric $dd^c\varphi$). We recall here the main result of  \cite{GKY13}.
\begin{thm}Let $\Omega\subset \mathbb{C}^n$ be a bounded smooth strongly pseudoconvex domain which is
circled. Let $\varphi$ be a smooth $S^1$-invariant strictly plurisuharmonic  solution of the complex MongeAmp\`ere problem $(MA)_1$. If $\Omega$ is strictly $\varphi$-convex, then $\varphi$ is the unique $S^1$-invariant solution of $(MA)_1$.
\end{thm}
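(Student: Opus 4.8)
The plan is to connect $\varphi$ to an arbitrary second $S^1$-invariant solution by the weak geodesic, to monitor the Ding functional along it, and to deduce from a rigidity argument that the geodesic is stationary. So let $\varphi_1$ be a second $S^1$-invariant smooth strictly plurisubharmonic solution of $(MA)_1$, put $\varphi_0:=\varphi$, and note that $\varphi_0,\varphi_1\in I(\Omega)$.

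First I would let $(\Phi_s)_{0\le s\le 1}$ be the geodesic joining $\varphi_0$ to $\varphi_1$: by Theorem \ref{zero} it is the restriction to $s=\log|\zeta|$ of the Perron--Bremermann envelope $\Phi$ solving $(dd^c_{z,\zeta}\Phi)^{n+1}=0$ on $\Omega\times A$, continuous up to the boundary. Since $\Omega$ is circled and $\varphi_0,\varphi_1$ are $S^1$-invariant, the boundary data of this envelope are invariant under the holomorphic $S^1$-action in $z$ and under rotations of $\zeta$, so $\Phi$ is $S^1$-invariant and each slice $\Phi_s$ lies in $I(\Omega)$.

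Next I would run the Ding functional along the geodesic: $\mathcal{F}_1(\Phi_s)=E(\Phi_s)+\log\int_\Omega e^{-\Phi_s}\,d\mu$. By Lemma \ref{21} the map $s\mapsto E(\Phi_s)$ is affine, and by Berndtsson's theorem \cite{Bern06} (used already in the proof of Proposition \ref{33}) the map $s\mapsto -\log\int_\Omega e^{-\Phi_s}\,d\mu$ is convex; hence $s\mapsto\mathcal{F}_1(\Phi_s)$ is concave on $[0,1]$. On the other hand Proposition \ref{33} gives $\mathcal{F}_1(\varphi_0)=\mathcal{F}_1(\varphi_1)=\sup_{\psi\in I(\Omega)}\mathcal{F}_1(\psi)$, so the concave function $s\mapsto\mathcal{F}_1(\Phi_s)$ attains its maximum at both endpoints and is therefore constant. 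It follows that $s\mapsto -\log\int_\Omega e^{-\Phi_s}\,d\mu=E(\Phi_s)-\mathcal{F}_1(\Phi_s)$ is affine, i.e. equality holds in Berndtsson's convexity inequality.

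The hard part is to convert this equality into rigidity. The equality case of Berndtsson's theorem forces the geodesic to be ``trivial'': for almost every $s$ the complex gradient $\nabla^{1,0}\dot\Phi_s$ of $\dot\Phi_s$ with respect to $\omega_{\Phi_s}$ is a holomorphic vector field on $\Omega$, and together with the geodesic equation \eqref{1} this means $(\Phi_s)$ is obtained by transporting $\varphi_0$ along a one-parameter family of biholomorphisms of $\Omega$. At $s=0$ this yields a holomorphy potential $u:=\dot\Phi_0$ for the metric $\omega_{\varphi_0}=dd^c\varphi_0$ --- smooth once the geodesic is approximated by $\varepsilon$-geodesics, which is the technical point to be checked since the geodesic is only $C^{1,1}$ in space by Theorem \ref{lip} --- and $u$ vanishes on $\partial\Omega$ because $\Phi_s\equiv 0$ there for all $s$. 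But $(MA)_1$ says precisely that $\mathrm{Ric}(\omega_{\varphi_0})=\omega_{\varphi_0}$, so $\omega_{\varphi_0}$ is K\"ahler--Einstein with positive curvature; combining the Bochner identity for the holomorphy potential $u$ with the hypothesis that $\partial\Omega$ is strictly $\varphi_0$-convex --- which gives a strictly positive boundary term in the corresponding Reilly-type integral identity, equivalently a strict lower bound on the first Dirichlet eigenvalue of the complex Laplacian ruling out the value that a nonzero holomorphy potential would require --- forces $u\equiv 0$. Hence the geodesic is stationary and $\varphi_1=\varphi_0$.
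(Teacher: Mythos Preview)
The paper does not supply its own proof of this statement: the theorem is explicitly recalled from \cite{GKY13} (``We recall here the main result of \cite{GKY13}'') and is invoked as a black box in the proof of the subsequent theorem (Theorem~C). There is therefore no argument in the present paper to compare your proposal against.

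That said, your outline is the natural variational uniqueness scheme and is close in spirit to what \cite{GKY13} does: Proposition~\ref{33} shows that each $S^1$-invariant solution maximizes $\mathcal{F}_1$ over $I(\Omega)$, concavity along the geodesic then forces $s\mapsto\mathcal{F}_1(\Phi_s)$ to be constant, and equality in Berndtsson's convexity promotes $\dot\Phi_0$ to a holomorphy potential with zero Dirichlet boundary values, which the strict $\varphi$-convexity hypothesis (via a Reilly-type integral identity, equivalently a strict lower bound $\lambda_1>1$ for the $\omega_\varphi$-Laplacian on functions vanishing on $\partial\Omega$) forces to vanish. The two places you yourself flag as delicate---obtaining enough regularity of the weak geodesic so that $\dot\Phi_0$ is a genuine smooth potential, and making the Bochner/eigenvalue step precise---are exactly where the work in \cite{GKY13} lies; your plan is sound but, as you acknowledge, still a sketch rather than a complete proof.
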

Inspired by Dinezza-Guedj \cite[theorem 5.5]{DG16}, we now prove the following theorem 
\begin{thm}Let $\Omega\subset \mathbb{C}^n$ be a smooth strongly pseudo-convex  circled domain. If there exists $\varepsilon(t), M(t)>0$ such that,
$$\mathcal{F}_{t}(\psi)\leq \varepsilon(t) E(\psi)+M(t)\;\; \;\;\forall \psi \in \mathcal{H},$$
then $(MA)_t$ admits a $S^1$-invariant smooth strictly plurisubharmonic function solution.\\
Conversely if $(MA)_t$ admits such a solution $\varphi_t$ and $\Omega$ is strictly $\varphi_t$-convex, then there exists $\varepsilon(t), M(t)>0$ such that,
$$\mathcal{F}_{t}(\psi)\leq \varepsilon(t) E(\psi)+M(t)\;\; \;\;\forall \psi \in \mathcal{H}.$$
\end{thm}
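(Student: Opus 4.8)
The plan is to run the variational method of Berman--Boucksom--Guedj--Zeriahi \cite{BBGZ}, adapted to the local circled setting, combined with the geodesic convexity of $\mathcal{F}_t$ established in Lemma \ref{21} and Proposition \ref{33} and with the uniqueness statement recalled above.

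\textbf{Coercivity $\Rightarrow$ existence.} I would work with $S^1$-invariant potentials, which is legitimate because $\Omega$ is circled and $\mu$ is rotation invariant. Since any $\psi\in\mathcal{H}$ satisfies $\psi\le 0$, hence $E(\psi)\le 0$, the hypothesis gives $\mathcal{F}_t\le M(t)$ on $\mathcal{H}$, and this bound extends to $\mathcal{E}^1(\Omega)$ by monotone approximation, so $\sup\mathcal{F}_t<+\infty$. For a maximizing sequence $\psi_j$ the coercivity inequality rewrites as $E(\psi_j)\ge \varepsilon(t)^{-1}(\mathcal{F}_t(\psi_j)-M(t))$, which forces $E(\psi_j)$ to stay bounded below; thus the $\psi_j$ lie in some class $\mathcal{E}^1_C(\Omega)$. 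This class is compact for the $L^1(\mu)$-topology (standard compactness of finite-energy classes), and $\mathcal{F}_t$ is upper semi-continuous on it by Lemma \ref{3}, so the supremum is attained at some $S^1$-invariant $\varphi_t\in\mathcal{E}^1_C(\Omega)$. Differentiating $E$ and $\psi\mapsto \tfrac1t\log\int e^{-t\psi}d\mu$ along the paths $P(\varphi_t+sv)$ with $v$ bounded (as in \cite{BBGZ}) shows that $\varphi_t$ satisfies the Euler--Lagrange equation $(dd^c\varphi_t)^n=e^{-t\varphi_t}\mu/\int_\Omega e^{-t\varphi_t}\,d\mu$; testing only against $S^1$-invariant $v$ suffices since both sides are $S^1$-invariant measures. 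Finally I would bootstrap regularity: a function in $\mathcal{E}^1$ has zero Lelong numbers, so $e^{-t\varphi_t}\in L^p$ for all $p$ by Skoda--Zeriahi integrability \cite{Zer01}; Ko\l{}odziej's estimates give $\varphi_t\in C^0(\bar\Omega)$, hence the density is continuous and strictly positive, and the Caffarelli--Kohn--Nirenberg--Spruck a priori estimates for the Dirichlet problem on the smooth strongly pseudoconvex $\Omega$, together with Evans--Krylov and a standard bootstrap, upgrade $\varphi_t$ to a smooth strictly plurisubharmonic solution of $(MA)_t$.

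\textbf{Existence $\Rightarrow$ coercivity.} Proposition \ref{33} already gives the trivial bound $\mathcal{F}_t(\psi)\le\mathcal{F}_t(\varphi_t)$ for every $S^1$-invariant $\psi$, i.e.\ coercivity with $\varepsilon(t)=0$; the point is to promote it to $\varepsilon(t)>0$, and this is exactly where the strict $\varphi_t$-convexity (hence uniqueness of the solution) is used. Following Darvas--Rubinstein \cite{DR15} and Di Nezza--Guedj \cite{DG16} I would argue by contradiction: if coercivity fails, pick $\psi_j\in\mathcal{H}$ with $\mathcal{F}_t(\psi_j)>\tfrac1j E(\psi_j)-j$; since $\mathcal{F}_t(\psi_j)\le\mathcal{F}_t(\varphi_t)$ this forces $-E(\psi_j)\to+\infty$ while $\mathcal{F}_t(\psi_j)-\mathcal{F}_t(\varphi_t)=o(-E(\psi_j))$. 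Joining $\varphi_t$ to $\psi_j$ by the continuous geodesic constructed in Sections 2--3 and reparametrizing to unit $E$-speed, one extracts a nontrivial finite-energy geodesic ray $(\Phi_s)_{s\ge 0}$ emanating from $\varphi_t$. Along it $s\mapsto E(\Phi_s)$ is affine by Lemma \ref{21} and $s\mapsto -\tfrac1t\log\int e^{-t\Phi_s}d\mu$ is convex by Berndtsson \cite{Bern06}, so $s\mapsto\mathcal{F}_t(\Phi_s)$ is concave; the choice of $\psi_j$ makes its asymptotic slope nonnegative, and since $\varphi_t$ is the maximum this concave function is constant $\equiv\mathcal{F}_t(\varphi_t)$. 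Then every $\Phi_s$ is itself a maximizer of $\mathcal{F}_t$, hence an $S^1$-invariant solution of $(MA)_t$ by the Euler--Lagrange and regularity arguments of the first part; the strict $\varphi_t$-convexity and the recalled uniqueness theorem then force $\Phi_s\equiv\varphi_t$, contradicting the nontriviality of the ray.

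\textbf{Main obstacle.} The delicate part is the second implication. One must make precise the notion of asymptotic slope of the concave function $s\mapsto\mathcal{F}_t(\Phi_s)$ and, more importantly, show that the object extracted from the sequence of finite-energy geodesic segments is a genuine nontrivial geodesic ray along which Lemma \ref{21} and Berndtsson's convexity remain valid despite the low regularity; the implication ``$\Phi_s$ maximizes $\Rightarrow$ $\Phi_s$ is a smooth solution $\Rightarrow$ contradiction'' is then where the strict $\varphi_t$-convexity hypothesis is indispensable. By contrast, in the first implication the only analytically heavy step is the regularity bootstrap, and that is entirely off-the-shelf (Skoda--Zeriahi, Ko\l{}odziej, Caffarelli--Kohn--Nirenberg--Spruck, Evans--Krylov).
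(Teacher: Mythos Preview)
Your first implication matches the paper's: the paper simply defers to \cite{GKY13} in one line, and you spell out that very variational argument (maximizing sequence in a bounded-energy class, upper semi-continuity from Lemma~\ref{3}, Euler--Lagrange, then the Skoda/Ko\l odziej/CKNS/Evans--Krylov bootstrap).

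For the converse you take a genuinely different route. The paper does \emph{not} build a geodesic ray. Given the contradiction sequence $\varphi_j$ with $d_j:=-E(\varphi_j)\to+\infty$, it joins $\varphi_t$ to $\varphi_j$ by the weak geodesic $(\phi_{s,j})_{0\le s\le d_j}$ parametrised at unit $E$-speed, and then simply evaluates this segment at the fixed time $s=1$, setting $\psi_j:=\phi_{1,j}$. Affinity of $E$ (Lemma~\ref{21}) gives $E(\psi_j)\to -1+E(\varphi_t)$, while concavity of $s\mapsto\mathcal F_t(\phi_{s,j})$ together with $\mathcal F_t(\varphi_j)>\tfrac1j E(\varphi_j)+\mathcal F_t(\varphi_t)+1$ forces $\mathcal F_t(\psi_j)\to\mathcal F_t(\varphi_t)$. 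Thus $(\psi_j)$ is a maximizing sequence with uniformly bounded energy; a subsequential limit $\psi\in\mathcal E^1$ satisfies $\mathcal F_t(\psi)=\mathcal F_t(\varphi_t)$, hence $\psi=\varphi_t$ by uniqueness, yet $E(\psi)=-1+E(\varphi_t)\neq E(\varphi_t)$, a contradiction. This is precisely the Di~Nezza--Guedj device you cite, and it entirely bypasses the obstacle you flag as the main one: there is no ray to extract, no need to prove that a limit of geodesic segments is again a geodesic, and no need to upgrade each $\Phi_s$ to a smooth solution before invoking uniqueness. Your Darvas--Rubinstein style argument should also go through once made precise, but it is heavier than what the situation requires.

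Two small slips in your write-up. First, the negation you use, $\mathcal F_t(\psi_j)>\tfrac1j E(\psi_j)-j$, is too weak: since $E\le 0$ the right-hand side is $\le -j$, so this inequality carries no information and certainly does not force $-E(\psi_j)\to+\infty$. The paper fixes $M=\mathcal F_t(\varphi_t)+1$ and takes $\mathcal F_t(\varphi_j)>\tfrac1j E(\varphi_j)+\mathcal F_t(\varphi_t)+1$. Second, even with the correct negation one must dispose of the case where $E(\varphi_j)$ stays bounded; the paper handles this separately (pass to a limit $\psi\in\mathcal E^1$ and use upper semi-continuity to get $\mathcal F_t(\psi)\ge\mathcal F_t(\varphi_t)+1$, contradicting Proposition~\ref{33}).
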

\begin{proof}If we assume  the following inequality holds,
$$\mathcal{F}_t(\psi)\leq \varepsilon(t) E(\psi)+M(t)$$
 then the same method of  \cite{GKY13}applies, if only we change $\varphi$ by $t\varphi$.\\
Conversely, as $\varphi_t$ is a solution of $(MA)_t$ then from the (proposition \ref{33}) we have 
\begin{equation}\label{11}
\mathcal{F}_t(\varphi_t):=\sup\{\mathcal{F}_t(\psi)/ \psi\in \mathcal{H}\cap I(\Omega)\}
\end{equation}
assume for contradiction that there is  no $\varepsilon>0$ such that
$$\mathcal{F}_t(\psi)\leq \varepsilon E(\psi)+M$$
for all $\psi \in \mathcal{H}$. Put $\varepsilon_j=\frac{1}{j}$ and  $M=\mathcal{F}_t(\varphi_t)+1$. Then we can find a sequence $(\varphi_j)\subset \mathcal{H}$ such that
$$\mathcal{F}_t(\varphi_j)>\frac{E(\varphi_j)}{j}+\mathcal{F}_t(\varphi_t)+1$$
We discuss here two cases, the first case if  $E(\varphi_j)$ does not blow up to $-\infty$, we reach a contradiction, by letting $j$ go to $+\infty$. Indeed we can assume that $E(\varphi_j)$ bounded and $\varphi_j$ converges  to  some $\psi\in \mathcal{E}^1(\Omega)$ which is $S^1$-invariant. Since $\mathcal{F}_t$ is upper semi-continuous by lemma \ref{3}, we infer $\mathcal{F}_t(\psi)\geq \mathcal{F}_t(\varphi_t)+1>\mathcal{F}_t(\varphi_t)$ contradiction because $\varphi_t$ is the solution of $(MA)_t$.\\
The second case if $E(\varphi_j)\rightarrow-\infty$. It follows that $d_j=-E(\varphi_j)\rightarrow+\infty$.\\ We let $(\phi_{s,j})_{0\leq s\leq d_j} $ denote the weak geodesic joining $\varphi_t$ to $\varphi_j$ and set $\psi_j:=\phi_{1,j}$. We know that is  $s\longmapsto E(\phi_{s,j})$ is affine along of the Mabuchi geodesic. Thus  
$E(\phi_{s,j})=a_js+b_j$, where  $a_j$ and $b_j$ are real numbers. For $s=0$  we have 
$$E(\phi_{0,j})=b_j=E(\varphi_t)$$
and for $s=d_j$ we have
$$E(\varphi_j)=E(\phi_{d_j,j})=a_jd_j+E(\varphi_t)$$
therefore  $a_j=\frac{E(\varphi_j)-E(\varphi_t)}{d_j}$. Then 
\begin{equation}\label{e}
E(\phi_{s,j})= \frac{E(\varphi_j)-E(\varphi_t)}{d_j}s+E(\varphi_t)
\end{equation}
 Since  $s\longmapsto E(\phi_{s,j})$ is affine along of the Mabuchi geodesic and by Berndtsson \cite{Bern06} convexity result, we infer that  the map $s \longmapsto \mathcal{F}_t(\phi_{s,j})$ is concave, which implies with (\ref{11}) that
  $$ 0\geq\mathcal{F}_t(\phi_{1,j})-\mathcal{F}_t(\phi_{0,j})\geq \frac{\mathcal{F}_t(\phi_{d_j,j})-\mathcal{F}_t(\phi_{0,j})}{d_j}>-\frac{1}{j}+\frac{1}{d_j}$$
thus $\mathcal{F}_t(\psi_j)\longrightarrow \mathcal{F}_t(\varphi_t)$. This shows that $(\psi_j)$ is a maximizing sequence for $\mathcal{F}_t$. If we take $t=1$ on equation (\ref{e}) we get
\begin{equation}\label{f}
E(\psi_j)=\frac{E(\varphi_j)-E(\varphi_t)}{d_j}+E(\varphi_t)=-1-\frac{E(\varphi_t)}{d_j}+E(\varphi_t)\geq -1+E(\varphi_t)
\end{equation}
Passing to subsequence, we can assume that $\psi_j$ converge to $\psi\in \mathcal{E}^1(\Omega)$ which is $S^1$-invariant. Since $\mathcal{F}_t$ is upper semi-continuous and $\psi_j$ is a maximizing sequence for $\mathcal{F}_t$
then we have $\mathcal{F}_t(\psi)=\mathcal{F}_t(\varphi_t)$ and so $\psi=\varphi_t$ thanks to the uniqueness. Letting  $j$ to infinity in (\ref{f}) we get 
$$E(\psi)=-1+E(\varphi_t)$$
This yields a contradiction.
\end{proof}
  \vskip2cm

\end{document}